\DeclareMathOperator{\lspan}{span}                          
\DeclareMathOperator{\supp}{supp}                           
\DeclareMathOperator{\diam}{diam}                           
\DeclareMathOperator{\rad}{rad}                             
\DeclareMathOperator{\Lip}{Lip}                             
\DeclareMathOperator{\lip}{lip}                             
\DeclareMathOperator{\im}{im}                               
\newcommand{\NN}{\mathbb{N}}             
\newcommand{\ZZ}{\mathbb{Z}}             
\newcommand{\RR}{\mathbb{R}}             
\newcommand{\CC}{\mathcal{C}}             
\newcommand{\HH}{\mathcal{H}}             
\newcommand{\LL}{\mathcal{L}}            
\newcommand{\abs}[1]{\left|{#1}\right|}                     
\newcommand{\pare}[1]{\left({#1}\right)}                    
\newcommand{\set}[1]{\left\{{#1}\right\}}                   
\newcommand{\norm}[1]{\left\|{#1}\right\|}                  
\newcommand{\dual}[1]{{#1}^\ast}                            
\newcommand{\ball}[1]{B_{{#1}}}                             
\newcommand{\duality}[1]{\left<{#1}\right>}                 
\newcommand{\cl}[1]{\overline{#1}}                          
\newcommand{\weak}{\textit{w}}                              
\newcommand{\weaks}{\textit{w}$^\ast$}                      
\newcommand{\wsconv}{\stackrel{\dual{w}}{\longrightarrow}}  
\newcommand{\restrict}{\mathord{\upharpoonright}}           
\newcommand{\lipfree}[1]{\mathcal{F}({#1})}                 
\newcommand{\Free}{{\mathcal F}}
\newcommand{\lipnorm}[1]{\norm{#1}_L}                       
\newcommand{\mol}[1]{m_{#1}}                                
\newcommand{\eps}{\varepsilon}
\newcommand{\A}{\mathcal{A}}
\newcommand{\E}{\mathbb{E}}
\newcommand{\PP}{\mathbb{P}}
\theoremstyle{plain}
\newtheorem{theorem}{Theorem}[section]
\newtheorem{lemma}[theorem]{Lemma}
\newtheorem{corollary}[theorem]{Corollary}
\newtheorem{proposition}[theorem]{Proposition}
\newtheorem*{claim*}{Claim}
\newtheorem{maintheorem}{Theorem} 
\newtheorem*{informalthm}{Informal Theorem}
\newtheorem*{rep@theorem}{\rep@title}
\newcommand{\newreptheorem}[2]{%
\newenvironment{rep#1}[1]{%
 \def\rep@title{#2 \ref{##1}}%
 \begin{rep@theorem}}%
 {\end{rep@theorem}}}
\theoremstyle{definition}
\newtheorem*{definition*}{Definition}
\newtheorem{definition}[theorem]{Definition}
\newtheorem{example}[theorem]{Example}
\theoremstyle{remark}
\newtheorem{remark}[theorem]{Remark}
\begin{document}
\title[Purely 1-unrectifiable spaces and locally flat Lipschitz functions]{Purely 1-unrectifiable metric spaces and locally flat Lipschitz functions}

\author[R. J. Aliaga]{Ram\'on J. Aliaga}
\address[R. J. Aliaga]{Instituto Universitario de Matem\'atica Pura y Aplicada, Universitat Polit\`ecnica de Val\`encia, Camino de Vera S/N, 46022 Valencia, Spain}
\email{raalva@upvnet.upv.es}

\author[C. Gartland]{Chris Gartland}
\address[C. Gartland]{Texas A\&M University, College Station, TX 77843, USA}
\email{cgartland@math.tamu.edu}

\author[C. Petitjean]{Colin Petitjean}
\address[C. Petitjean]{LAMA, Univ Gustave Eiffel, UPEM, Univ Paris Est Creteil, CNRS, F-77447, Marne-la-Vall\'ee, France}
\email{colin.petitjean@univ-eiffel.fr}

\author[A. Proch\'azka]{Anton\'in Proch\'azka}
\address[A. Proch\'azka]{Laboratoire de Math\'ematiques de Besan\c con,
Universit\'e Bourgogne Franche-Comt\'e,
CNRS UMR-6623,
16, route de Gray,
25030 Besan\c con Cedex, France}
\email{antonin.prochazka@univ-fcomte.fr}

\begin{abstract}
We characterize compact metric spaces whose locally flat Lipschitz functions separate points uniformly as exactly those that are purely 1-unrectifiable, resolving a problem of Weaver. We subsequently use this geometric characterization to answer several questions in Lipschitz analysis. Notably, it follows that the Lipschitz-free space $\mathcal{F}(M)$ over a compact metric space $M$ is a dual space if and only if $M$ is purely 1-unrectifiable. Furthermore, we establish a compact determinacy principle for the Radon-Nikod\'ym property (RNP) and deduce that, for any complete metric space $M$, pure 1-unrectifiability is actually equivalent to some well-known Banach space properties of $\mathcal{F}(M)$ such as the RNP and the Schur property. A direct consequence is that any complete, purely 1-unrectifiable metric space isometrically embeds into a Banach space with the RNP. Finally, we provide a possible solution to a problem of Whitney by finding a rectifiability-based description of 1-critical compact metric spaces, and we use this description to prove the following: a bounded turning tree fails to be 1-critical if and only if each of its subarcs has $\sigma$-finite Hausdorff 1-measure.
\end{abstract}

\subjclass[2020]{Primary 51F30; Secondary 28A78, 30L05, 46B20, 46B22, 54E45}

\keywords{Purely 1-unrectifiable, Radon-Nikod\'ym property, Whitney arc, Lipschitz-free space, locally flat Lipschitz function}

\maketitle

\vspace{-0.1cm} 
\section{Introduction}

The motivation for the results of this paper comes from three corners of Lipschitz analysis: the Schur and Radon-Nikod\'ym properties (RNP) in Lipschitz-free Banach spaces, 
bi-Lipschitz embeddings into RNP spaces, and the geometry of Whitney arcs. As it turns out, the concept of a locally flat Lipschitz function is central to each one. Recall that a Lipschitz function $f: M \to \RR$ on a metric space $(M,d)$ is
\textit{locally flat} if
$$ \lim_{x,y \to p} \frac{|f(x)-f(y)|}{d(x,y)} = 0 $$
for every $p\in M$. When $M$ is compact, the limit condition can be equivalently replaced by $d(x,y)\rightarrow 0$, uniformly in $p$. 
In \cite{Weaver_1996}, Weaver isolated a fundamental property connecting locally flat Lipschitz functions on a metric space to its geometry. Following \cite[Defintion 4.10]{Weaver2}, we say that the locally flat Lipschitz functions \textit{separate points (of $M$) uniformly} if there exists $C \in [1,\infty)$ such that for any $x,y \in M$, there exists a $C$-Lipschitz locally flat function $f: M \to \RR$ with $f(x)-f(y) = d(x,y)$. The infimum of all such $C$ is called the \textit{separation constant}. Despite the importance of this property, the exact conditions on $M$ ensuring its satisfaction or failure remained unclear. Indeed, Weaver writes in \cite[p. 77-78]{Weaver1}, ``\textit{Whether [the locally flat Lipschitz functions] separate points uniformly depends only on the metric space $M$, so it is disappointing that the condition cannot be formulated in a way that directly involves the geometry of $M$.}" The first result of this article is a resolution of Weaver's problem. 
Recall that a metric space $M$ is \textit{purely $k$-unrectifiable} if, for every $A \subset \RR^k$ and Lipschitz map $f: A \to M$, the Hausdorff $k$-measure of $f(A)$ equals 0. By results in \cite{Kirchheim_1994}, $M$ is purely 1-unrectifiable if and only if it contains no \textit{curve fragments} - meaning bi-Lipschitz copies of compact, positive measure subsets of $\RR$ (see also Corollary \ref{cor:p1u=nocf}).

\begin{maintheorem} \label{ThmA}
Let $M$ be a compact metric space. Then the locally flat Lipschitz functions on $M$ separate points uniformly if and only if $M$ is purely 1-unrectifiable.
\end{maintheorem}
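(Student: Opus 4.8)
The plan is to treat the two implications separately; the forward implication (uniform separation $\Rightarrow$ pure $1$-unrectifiability) is the elementary one, and the converse is the crux.

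For the forward direction I would argue by contraposition. If $M$ is not purely $1$-unrectifiable then, by the cited theorem of Kirchheim, $M$ contains a curve fragment, i.e.\ there is a bi-Lipschitz embedding $\gamma\colon K\to M$ of a compact set $K\subset\RR$ with $|K|>0$; fix $L\ge 1$ with $L^{-1}|s-t|\le d(\gamma(s),\gamma(t))\le L|s-t|$. Given any locally flat $f\colon M\to\RR$, extend $f\circ\gamma$ from $K$ to a Lipschitz function $h$ on $[\min K,\max K]$ by affine interpolation on the complementary intervals; using the uniform local flatness of $f$ (available since $M$ is compact) together with the Lebesgue density theorem, one checks that $h'=0$ almost everywhere on $K$ — at a density point $u_0$ of $K$ one may compute $h'(u_0)$ along a two-sided sequence in $K$, where it vanishes. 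Hence for $s<t$ in $K$,
$$ |f(\gamma(s))-f(\gamma(t))|=|h(s)-h(t)|=\Big|\int_s^t h'\Big|\le \|h'\|_\infty\cdot|[s,t]\setminus K|\le L\,\Lip(f)\cdot|[s,t]\setminus K|. $$
Choosing $s<t$ in $K$ straddling a density point of $K$ we have $|[s,t]\setminus K|/(t-s)\to 0$ while $d(\gamma(s),\gamma(t))\ge L^{-1}(t-s)$, so the separation ratio $|f(\gamma(s))-f(\gamma(t))|/d(\gamma(s),\gamma(t))$ is bounded by $L^2\Lip(f)\cdot|[s,t]\setminus K|/(t-s)$, which cannot remain $\ge 1$; thus no single constant works and the locally flat functions fail to separate points uniformly.

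For the converse the key reduction is: it suffices to produce, for every $x\neq y$ in $M$, a $1$-Lipschitz $g\colon M\to\RR$ with $g(x)-g(y)\ge\tfrac12 d(x,y)$ whose image $g(M)$ is Lebesgue-null. Indeed, given such a $g$, pick an open $U\supseteq g(M)$ with $|U|\le\tfrac14 d(x,y)$, set $\psi(t)=\int_0^t\mathbf{1}_{\RR\setminus U}$ and $h=\psi\circ g$. Then $h$ is $1$-Lipschitz, and it is \emph{locally constant} on $M$ — each $p$ has $g(p)$ in a component of $U$ on which $\psi$ is constant, so $h$ is constant on the $g$-preimage of that component — hence locally flat; moreover $h(x)-h(y)=|[g(y),g(x)]\setminus U|\ge(g(x)-g(y))-|U|\ge\tfrac14 d(x,y)$, so rescaling $h$ by $d(x,y)/(h(x)-h(y))\le 4$ yields a locally flat function realizing $d(x,y)$ with Lipschitz constant at most $4$. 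To build $g$, start from $g_0=\max(0,\,d(x,y)-d(\cdot,x))$, which is $1$-Lipschitz with $g_0(x)-g_0(y)=d(x,y)$, and perturb it in the uniform norm by less than $\tfrac14 d(x,y)$, keeping it $1$-Lipschitz, so as to make $g(M)$ Lebesgue-null. The existence of such a perturbation is exactly a Besicovitch--Federer-type phenomenon for purely $1$-unrectifiable spaces — the $1$-Lipschitz maps $M\to\RR$ that crush $M$ to a null set are dense (indeed residual) in the uniform metric — and this is where Bate's theorem enters. When $\mathcal H^1(M)$ is only $\sigma$-finite, write $M=\bigcup_k M_k$ with $\mathcal H^1(M_k)<\infty$, apply the theorem to each $M_k$, and intersect the resulting residual sets via Baire.

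The main obstacle is the remaining case, where $\mathcal H^1(M)$ is not $\sigma$-finite — for instance $M$ a snowflaked interval $([0,1],|x-y|^\alpha)$ with $\alpha\in(0,1)$. Here the scheme above collapses: when $M$ is connected no $1$-Lipschitz $g$ has Lebesgue-null image, so local flatness cannot be reduced to local constancy after a perturbation, and this case demands a direct construction. The natural substitute is a family of ``sawtooth at scale $d(x,y)$'' functions, which on a snowflake are genuinely locally flat with bounded Lipschitz constant; carrying out such a construction on an arbitrary compact purely $1$-unrectifiable space of non-$\sigma$-finite Hausdorff $1$-measure, using that such a space must have infinite lower Hausdorff $1$-density somewhere, is where I expect the real difficulty to lie. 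A recurring secondary subtlety — both there and whenever one tries to pass from a good function on a subset to a good function on all of $M$ — is that the obvious globalizing devices (cut-offs, McShane extensions) are assembled from distance functions and so are far from locally flat, so patching has to be done carefully to preserve local flatness.
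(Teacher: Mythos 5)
Your forward implication is correct and is essentially the paper's argument (contraposition via a curve fragment, affine extension, and Lebesgue density). The reduction in your converse is also sound as far as it goes: if a $1$-Lipschitz $g$ with $g(x)-g(y)\ge\tfrac12 d(x,y)$ and $\lambda(g(M))=0$ exists, then $\psi\circ g$ is locally constant, hence locally flat, and the separation constant $4$ follows. The problem is that you have not proved the theorem: as you yourself observe, such a $g$ \emph{cannot exist} whenever $M$ is connected and $x\neq y$, since then $g(M)$ is a nondegenerate interval. Connected purely $1$-unrectifiable compacta --- snowflaked arcs, bounded turning trees of non-$\sigma$-finite $\HH^1$-measure, the quasiarcs of Norton --- are precisely the central examples the theorem is about, and for all of them your scheme produces nothing. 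The "sawtooth at scale $d(x,y)$" suggestion and the appeal to "infinite lower density somewhere" are not arguments; no construction is given, and it is not clear how a pointwise density statement would yield a single locally flat function with a uniform Lipschitz bound. (A secondary issue: even in the $\sigma$-finite case, Bate's theorem gives residual sets in $\ball{\Lip(M_k)}$ for each piece $M_k$, which live in different spaces; intersecting them "via Baire" requires first showing that $\{g\in\ball{\Lip(M)}:\lambda(g(M_k))=0\}$ is residual in $\ball{\Lip(M)}$, which needs an additional restriction/extension argument.)

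The paper's proof bridges exactly this gap by abandoning the "null image" paradigm. It embeds $M$ isometrically into a compact convex set $\Omega$ and defines $g=\phi_{f,p}$, where $\phi_{f,p}(x)=\inf_\gamma\int_\gamma f\,ds$ over Lipschitz curves in $\Omega$ from $p$ to $x$, for a suitable Borel weight $f:\Omega\to[0,1]$ vanishing on $M$; such a $g$ is automatically locally flat on $M$ (Proposition \ref{prop:locallyflat}) regardless of connectivity or $\sigma$-finiteness. The weight $f$ is built from a decreasing sequence of compact neighborhoods $V_n\supset M$ chosen, via a compactness/Arzel\`a--Ascoli argument (Lemma \ref{lem:V}), so that every curve of length $\le n$ meets $V_n$ in Hausdorff $1$-content less than $\delta 2^{-n}$ --- this is the only place pure $1$-unrectifiability is used. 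The lower bound $g(x)-g(p)\ge d(p,x)-2\delta$ then comes from the curve-modification lemmas (shortcutting a competitor curve inside sets of small content), not from any measure-zero image. So while your reduction recovers the theorem on totally disconnected $\HH^1$-$\sigma$-finite spaces, the main case remains unproved, and the missing idea is the path-infimum construction together with the small-content neighborhoods.
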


In his deep study of Lipschitz functions on purely $k$-unrectifiable metric spaces, Bate proved a closely related version of Theorem~\ref{ThmA} (\cite[Lemma 3.4]{Bate_2020}), and indeed Theorem \ref{ThmA} can be deduced in a straightforward manner from Bate's lemma. In Section \ref{section:lipspu}, we provide a self-contained proof.

In addition to its independent interest, Theorem \ref{ThmA} turns out to be the first step in a chain of implications that culminates in the characterization of prominent Banach space properties of Lipschitz-free spaces on the one hand, and a characterization of 1-critical compact metric spaces on the other hand. 
We will now describe these relations in turn, beginning with a review of Lipschitz-free spaces.

\medskip

\noindent \textbf{Lipschitz-free spaces.}
For a metric space $(M,d)$ equipped with a distinguished point $0 \in M$, the \textit{Lipschitz-free space} $\lipfree{M}$ is a Banach space that is built around $M$ in such a way that $M$ is isometric to a subset $\delta(M)$ of $\Free(M)$, and Lipschitz maps from $\delta(M)$ into any other Banach space $X$ uniquely extend to bounded linear operators from $\Free(M)$ into $X$ (see Section~\ref{Prelim:Free} for a more detailed definition).
In particular $\Free(M)$ is a canonical isometric predual of the space $\Lip_0(M)$ of Lipschitz functions on $M$ vanishing at $0$ endowed with the Lipschitz constant as a norm. Naturally, the study of Lipschitz-free spaces is at the intersection of functional analysis and metric geometry. Nevertheless, it is worth mentioning that Lipschitz-free spaces are studied in different research areas, for different reasons and under different names. For instance, thanks to the Kantorovich-Rubinstein duality theorem (e.g. \cite[Theorem~1.14]{Villani}), the norm on $\Free(M)$ can be interpreted as the cost of the optimal solution of a certain transportation problem (see \cite[Section~3.3]{Weaver2}, where $\lipfree{M}$ is called \textit{Arens-Eells space}).
They are also of significant interest for computer science where the names that are commonly used for this distance are \textit{earth mover distance} and \textit{transportation cost} (e.g. \cite{NaorSchecthman}). 

The first application of Theorem~\ref{ThmA} is a characterization of Lipschitz-free spaces over compact spaces which are isometrically dual Banach spaces. The study of duality of Lipschitz-free spaces $\lipfree{M}$ dates back to the 1960's. The first important results are due to Jenkins and Johnson \cite{JenkinsPhd,Johnson_1970}, who proved that for any compact $M$ endowed with a snowflake metric (also known as a H\"older metric), the relation
\begin{equation}
\label{eq:duality}
\lipfree{M} = \dual{\lip_0(M)}
\end{equation}
holds, where $\lip_0(M)$ is the ``little Lipschitz space''  consisting of all locally flat functions in $\Lip_0(M)$.
On the other hand, Lebesgue's fundamental theorem of calculus easily implies $\lip_0([0,1]) = \{0\}$, and so \eqref{eq:duality} cannot hold for $M = [0,1]$. More strongly, Godard proved in \cite{Godard_2010} that $\lipfree{A}$ is isomorphic to $L_1$ whenever $A$ is a positive measure subset of $\RR$, and therefore $\lipfree{M}$ can never be a dual space when $M$ is separable but not purely 1-unrectifiable. Some time after Johnson's result, Weaver identified in \cite{Weaver_1996} a powerful sufficient condition for \eqref{eq:duality}: it is enough that $\lip_0(M)$ separates points of $M$ uniformly (a property enjoyed by snowflaked metric spaces $M$).
Thus, Theorem \ref{ThmA} bridges the gap between the works of Weaver and Godard, and we arrive at the following theorem. 

\begin{maintheorem}
\label{ThmB}
Let $M$ be a compact metric space. Then the following are equivalent:
\begin{enumerate}[label={\upshape{(\roman*)}}]
\item $M$ is purely 1-unrectifiable,
\item $\lipfree{M}$ is a dual space,
\item $\lip_0(M)$ is an isometric predual of $\lipfree{M}$.
\end{enumerate}
\end{maintheorem}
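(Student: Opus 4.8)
The plan is to derive Theorem~\ref{ThmB} from Theorem~\ref{ThmA} together with known structural results about $\lip_0(M)$ and about Lipschitz-free spaces that are duals. The implications should be organized cyclically as $(i)\Rightarrow(iii)\Rightarrow(ii)\Rightarrow(i)$.

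For $(i)\Rightarrow(iii)$: assume $M$ is compact and purely 1-unrectifiable. By Theorem~\ref{ThmA}, the locally flat Lipschitz functions separate points of $M$ uniformly, i.e.\ $\lip_0(M)$ separates points of $M$ uniformly with some constant $C$. Here I would invoke Weaver's theorem (\cite[Theorem~4.38]{Weaver2}, the result referenced in the excerpt as the ``powerful sufficient condition''): if $M$ is compact and $\lip_0(M)$ separates points uniformly, then the natural map $\Free(M)\to \dual{\lip_0(M)}$ is an onto isometry, so $\lip_0(M)$ is an isometric predual of $\Free(M)$. This gives $(iii)$ directly. (One should double-check the compactness hypotheses match; Weaver states this for compact $M$, or more generally when $\lip_0(M)$ is ``nice'', which holds here.)

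The implication $(iii)\Rightarrow(ii)$ is trivial: if $\lip_0(M)$ is an isometric predual of $\Free(M)$, then in particular $\Free(M)$ is (isometrically, hence isomorphically) a dual Banach space.

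The main point is $(ii)\Rightarrow(i)$, proved by contraposition. Suppose $M$ is compact but \emph{not} purely 1-unrectifiable. By the Kirchheim/David--Semmes characterization quoted in the excerpt (\cite[Theorem~9]{Kirchheim_1994}, \cite[Theorem~11.12]{DS_1997}), $M$ contains a curve fragment: there is a compact set $A\subset\RR$ with positive Lebesgue measure and a bi-Lipschitz embedding $\gamma\colon A\to M$. I would then use the standard fact that a bi-Lipschitz embedding of metric spaces induces a linear isomorphic embedding of the corresponding Lipschitz-free spaces whose range is complemented — more precisely, $\Free(\gamma(A))$ embeds as a complemented subspace of $\Free(M)$ (the complementation uses a Lipschitz retraction-free argument: extend via McShane and use that $\Free$ of a subset sits complementably when the subset admits a Lipschitz retraction, or alternatively invoke the general principle that $\Free(N)$ is complemented in $\Free(M)$ for $N\subseteq M$ closed with a Lipschitz retraction; for curve fragments in $\RR$ such a retraction exists since $\RR$ is a Lipschitz retract of any metric space containing it... here I'd cite the relevant lemma, e.g.\ \cite[Lemma~2.3]{Godard_2010} or the complementation results for $\Free$ of subsets). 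By Godard's theorem \cite{Godard_2010}, $\Free(A)\cong L_1[0,1]$ (as $A$ has positive measure). Since $L_1$ is not isomorphic to any dual space and this property passes to spaces containing a complemented copy of $L_1$ — a dual space cannot contain a complemented subspace isomorphic to $L_1$, because $L_1$ is not isomorphic to a dual space and complemented subspaces of dual spaces need not be duals, so I must be careful here: the correct statement is that if $X$ is a dual space then every complemented subspace of $X$ that is weakly sequentially complete and ... no. The clean route: a separable dual space has the Radon--Nikod\'ym property; $L_1[0,1]$ fails RNP; RNP passes to closed subspaces; hence if $\Free(M)$ were a separable dual it would have RNP, but it contains an isomorphic copy of $L_1$, contradiction. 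Since $M$ is compact, $\Free(M)$ is separable, so this works. Thus $\Free(M)$ is not a dual space, establishing $\neg(i)\Rightarrow\neg(ii)$.

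The hard part is not conceptual but bookkeeping: making sure the compactness/separability hypotheses line up so that (a) Weaver's sufficient condition applies verbatim in direction $(i)\Rightarrow(iii)$, and (b) the passage ``not purely 1-unrectifiable $\Rightarrow$ contains a positive-measure bi-Lipschitz copy of a subset of $\RR$ $\Rightarrow$ $\Free(M)$ contains $L_1$'' is airtight, including the embedding of $\Free(A)$ into $\Free(M)$ (one needs that the curve fragment, or a further positive-measure subset of it, can be taken closed in $M$, which is automatic since $A$ is compact and $\gamma$ is bi-Lipschitz). Everything else is a short assembly of cited results.
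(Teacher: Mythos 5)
Your proposal is correct and follows essentially the same route as the paper: $(i)\Rightarrow(iii)$ via Theorem~\ref{ThmA} combined with Weaver's sufficient condition for $\lip_0(M)$ to be an isometric predual, and $(ii)\Rightarrow(i)$ by contraposition using a curve fragment, Godard's theorem, separability of $\lipfree{M}$, the Radon--Nikod\'ym property of separable duals, and heredity of the RNP. The complementation detour in your $(ii)\Rightarrow(i)$ is unnecessary, as you yourself conclude: the canonical isometric inclusion $\lipfree{\gamma(A)}\subset\lipfree{M}$ together with heredity of the RNP already suffices.
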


Theorem \ref{ThmB} unifies a series of results giving sufficient conditions under which $\lipfree{M}$ is a dual space for compact $M$ \cite{APP_2019,Dalet_2015,Dalet_2015_2,Godard_2010,Weaver_1996} (see Section~\ref{section:duality} for details), and in particular it solves the long-standing question regarding whether $\lip_0(M)$ must be one of its preduals in that case. We also show that, while $\lip_0(M)$ is never a unique predual of $\lipfree{M}$ (unless $M$ is finite), it is the only predual that satisfies certain additional conditions (see Theorem \ref{th:lattice_predual}). Section \ref{section:duality} contains the proof of Theorem \ref{ThmB} (through Theorem \ref{th:seven_equivalences}) and further discussion on duality of Lipschitz-free spaces.

When combined with previously known implications, Theorem~\ref{ThmB} characterizes some well-known Banach space properties in Lipschitz-free spaces $\lipfree{M}$ for compact $M$. These properties include the Radon-Nikod\'ym property, the Krein-Milman property and the Schur property (see Theorem \ref{th:seven_equivalences} and the discussion thereafter). Recall that a Banach space $X$ has the \textit{Radon-Nikod\'ym property} (RNP) if every Lipschitz map $\RR \to X$ is differentiable Lebesgue-almost everywhere; it has the \textit{Schur property} if every weakly convergent sequence in $X$ is norm convergent; and it has the \textit{Krein-Milman property} if every closed bounded convex set in $X$ is the closed convex hull of its extreme points. 

Our second main result is the generalization of these equivalences to noncompact $M$. It was recently shown in \cite{ANPP_2020} that certain Banach space properties of Lipschitz-free spaces, including the Schur property, are compactly determined in the following sense: a Lipschitz-free space $\lipfree{M}$ has the mentioned property if and only if the subspace $\lipfree{K}$ has it for each compact $K\subset M$. This makes it possible to establish some results on Lipschitz-free spaces by reducing their proofs to the case where $M$ is compact. 
In Section \ref{section:rnp}, we adapt the methods from \cite{ANPP_2020} to the study of the Radon-Nikod\'ym property of Lipschitz-free spaces, based on its characterization in terms of martingales, and conclude that the RNP is also compactly determined (see Corollary \ref{cor:CR-RNP}). As a consequence, we obtain the following general result.

\begin{maintheorem}
\label{ThmC}
Let $M$ be a  metric space. Then the following are equivalent:
\begin{enumerate}[label={\upshape{(\roman*)}}]
\item The completion of $M$ is purely 1-unrectifiable,
\item $\lipfree{M}$ has the Radon-Nikod\'ym property,
\item $\lipfree{M}$ has the Krein-Milman property,
\item $\lipfree{M}$ has the Schur property,
\item $\lipfree{M}$ contains no isomorphic copy of $L_1$.
\end{enumerate}
\end{maintheorem}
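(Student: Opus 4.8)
The plan is to deduce the general statement from its compact case. Since $\lipfree{M}$ is canonically isometric to $\lipfree{\overline{M}}$, where $\overline{M}$ is the completion of $M$ (with the distinguished point kept), I would first reduce to the case where $M$ is complete, so that (i) simply reads ``$M$ is purely 1-unrectifiable''. The two substantial inputs are: (a) the compact version of the theorem, namely that for compact $M$ all of (i)--(v) are equivalent, which should be part of Theorem~\ref{th:seven_equivalences} and ultimately rests on Theorem~\ref{ThmB} and hence on Theorem~\ref{ThmA}; and (b) the assertion that the Radon-Nikod\'ym property of a Lipschitz-free space is \emph{compactly determined}, i.e. $\lipfree{M}$ has the RNP provided $\lipfree{K}$ has it for every compact $K\subseteq M$; I would prove this separately as Corollary~\ref{cor:CR-RNP}. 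The analogous fact for the Schur property is already available from \cite{ANPP_2020}, and everything else is assembled from classical Banach space theory.

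Granting (a) and (b), the implications (i)$\Rightarrow$(ii) and (i)$\Rightarrow$(iv) are immediate: pure 1-unrectifiability is inherited by compact subsets, so by the compact case $\lipfree{K}$ has the RNP and the Schur property for every compact $K\subseteq M$, whence $\lipfree{M}$ has both by compact determination. Then (ii)$\Rightarrow$(iii) since the RNP implies the Krein-Milman property, and (ii)$\Rightarrow$(v) since $L_1$ fails the RNP while the RNP passes to closed subspaces. So (i) implies all of (ii)--(v).

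For the converse it is enough to show that failure of (i) forces failure of each of (ii)--(v). If $M$ (now complete) is not purely 1-unrectifiable, then by \cite[Theorem~9]{Kirchheim_1994} (recalled just before Theorem~\ref{ThmA}) $M$ contains a curve fragment $N$, that is, a subset bi-Lipschitz equivalent to a compact $A\subseteq\RR$ of positive Lebesgue measure. Being compact, $N$ is closed in $M$, so $\lipfree{N}$ sits isometrically as a closed subspace of $\lipfree{M}$; moreover $\lipfree{N}$ is isomorphic to $\lipfree{A}$, which is isomorphic to $L_1$ by Godard's theorem \cite{Godard_2010}. Thus $\lipfree{M}$ contains an isomorphic copy of $L_1$, and (v) fails. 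Since the RNP, the Schur property and the Krein-Milman property all pass to closed subspaces --- for the Krein-Milman property, because whether a point is extreme in a convex set does not depend on the ambient space, and the closed convex hull of a subset of a closed subspace is the same whether formed in that subspace or in the whole space --- and since $L_1$ has none of these three properties, $\lipfree{M}$ fails (ii), (iii) and (iv) as well. This closes the circle of equivalences.

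I expect the main obstacle to be input (b), the compact determination of the RNP (Corollary~\ref{cor:CR-RNP}). One inclusion is trivial, as closed subspaces inherit the RNP; the content is the converse, detecting the RNP of $\lipfree{M}$ on the subspaces $\lipfree{K}$. The natural approach is to combine the fact that the RNP is separably determined with an approximation step that pushes an arbitrary separable subspace of $\lipfree{M}$ into $\lipfree{K}$ for an appropriate compact $K\subseteq M$, in the spirit of the compact-reduction technique of \cite{ANPP_2020}; making this rigorous is where the real work lies. By contrast, once Theorem~\ref{ThmA} is in hand the compact case should follow from known results on duality of Lipschitz-free spaces together with the standard implications among the RNP, the Krein-Milman property, the Schur property and the containment of $L_1$.
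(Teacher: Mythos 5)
Your proposal is correct and follows essentially the same route as the paper: reduce to complete $M$, invoke compact determination of the RNP (Corollary~\ref{cor:CR-RNP}) and of the Schur property (\cite{ANPP_2020}) together with the compact/proper case (Theorem~\ref{th:seven_equivalences}) for the positive direction, and use Kirchheim's lemma plus Godard's theorem to embed $L_1$ into $\lipfree{M}$ when pure 1-unrectifiability fails. The only cosmetic difference is bookkeeping --- the paper closes the loop via the chain (ii)$\Rightarrow$(iii)$\Rightarrow$(v)$\Rightarrow$(i), whereas you show directly that $\neg$(i) defeats each of (ii)--(v) --- and both are valid.
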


Theorem \ref{ThmC} is a culmination of many previous works that provided sufficient conditions for a Lipschitz-free space to have the Schur property and suggested that it would be equivalent to the Radon-Nikod\'ym property \cite{ANPP_2020,Gartland_2020,HLP_2016,Kalton_2004,Petitjean_2017} (see Section~\ref{section:rnp} for more information and for the proof of Theorem~\ref{ThmC} through Theorem~\ref{thm:equivalences}).
To the best of our knowledge, Theorem~\ref{ThmC} is the first non-trivial characterization of isomorphic Banach space properties for $\Free(M)$ in terms of metric properties of $M$ (see e.g. \cite{APP_2019, Strong_Norm_Attaining, GLPRZ, Godard_2010, PRZ_2018} for characterizations of some isometric properties and \cite{Ambrosio} for a functional characterization of an isomorphic property). On top of that, it is a long-standing open problem in Banach Space theory whether the Radon-Nikod\'ym and Krein-Milman properties are equivalent in general (see e.g. \cite[p. 633]{FLP_handbook}), and Theorem \ref{ThmC} solves it for the particular case of Lipschitz-free spaces. The equivalence of several other properties follows easily; see Remark \ref{remark:more_equivalences} for a more detailed account. 
\bigskip

\noindent \textbf{Embeddings into RNP spaces.}
Our next major application is to the theory of bi-Lipschitz embeddings. Banach spaces with the Radon-Nikod\'ym property have gained popularity among metric space geometers because many well-known examples of metric spaces, such as the Heisenberg group or Laakso space, fail to bi-Lipschitz embed into any one of them (\cite[Theorem 1.6]{Cheeger-Kleiner}). The two main methods used to prove non-bi-Lipschitz embeddability of metric spaces into Banach spaces with the RNP are due to Cheeger and Kleiner (\cite[Theorem 1.6]{Cheeger-Kleiner}) and Ostrovksii (\cite[Theorem 1.3]{Ostrovskii}). The common feature of these methods is that the metric spaces under consideration must possess a large collection of curve fragments (see \cite{Bate_2014} and the last paragraph starting on page 2 of \cite{BateLi_2017}), and hence are far from being purely 1-unrectifiable. The following theorem is a partial converse to the theories of Cheeger-Kleiner and Ostrovskii. It is immediately implied by Theorem~\ref{ThmC} and the fact that every metric space isometrically embeds into its Lipschitz-free space.

\begin{maintheorem}
\label{ThmD}
A complete, purely 1-unrectifiable metric space isometrically embeds into a Banach space with the Radon-Nikod\'ym property.
\end{maintheorem}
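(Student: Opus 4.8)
The plan is to take the Lipschitz-free space $\Free(M)$ itself as the ambient Banach space. First, fix an arbitrary base point $0\in M$; this makes $M$ into a pointed metric space and thereby defines $\Free(M)$, which (as recalled in Section~\ref{Prelim:Free}) is independent of the choice of base point up to isometric isomorphism. Recall also that the canonical map $\delta\colon M\to\Free(M)$, sending a point to its associated evaluation functional, is an isometric embedding, so that $M$ is isometric to the subset $\delta(M)$ of $\Free(M)$.

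It then only remains to check that $\Free(M)$ has the Radon-Nikod\'ym property. For this I would invoke Theorem~\ref{ThmC}: since $M$ is complete, its completion is $M$ itself, which is purely 1-unrectifiable by hypothesis, so condition~(i) of Theorem~\ref{ThmC} holds; the implication (i)$\Rightarrow$(ii) then yields that $\Free(M)$ has the Radon-Nikod\'ym property. Combining the two observations, $\delta$ realizes $M$ as an isometric subset of the Banach space $\Free(M)$, which has the RNP, and the proof is complete.

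The statement is thus an immediate corollary and carries no genuine difficulty of its own; all of the substance is concentrated in Theorem~\ref{ThmC} (and, behind it, Theorem~\ref{ThmA}). The only point deserving a moment's attention is the passage through the completion: pure 1-unrectifiability is formulated for arbitrary metric spaces, but the equivalence with the RNP of $\Free(M)$ in Theorem~\ref{ThmC} is phrased in terms of the completion of $M$, so one simply observes that completeness of $M$ makes that hypothesis coincide with the one assumed here.
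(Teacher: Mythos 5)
Your proof is correct and coincides with the paper's own argument: the paper states that Theorem~\ref{ThmD} ``is immediately implied by Theorem~\ref{ThmC} and the fact that every metric space isometrically embeds into its Lipschitz-free space,'' which is precisely your route via $\delta\colon M\to\Free(M)$ and the implication (i)$\Rightarrow$(ii) of Theorem~\ref{ThmC}. Your remark that completeness of $M$ makes the completion hypothesis of Theorem~\ref{ThmC} coincide with the hypothesis here is the right (and only) point to check.
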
 

\noindent \textbf{Whitney sets.}
Our final application is to the theory of Whitney sets, or 1-critical sets. In \cite{Whitney_1935}, Whitney constructed a $C^1$ function $f: \RR^2 \to \RR$ such that $\nabla f$ vanishes on an arc $\gamma \subset \RR^2$, but $f$ is not constant on $\gamma$. Following the terminology from \cite{Norton_1989}, connected subsets $A$ of $\RR^n$ for which there exists a $C^1$ function $f: \RR^n \to \RR$ that is nonconstant on $A$ and $\nabla f \equiv 0$ on $A$ are known as \textit{1-critical sets}. These sets are very well-studied - for example, see \cite{Choquet_1944,CKZ_2008,PH_2003,Norton_1989,WX_2009,Whitney_1935}. By Whitney's extension theorem \cite{Whitney_1934}, a compact subset of $\RR^n$ is 1-critical if and only if it supports a nonconstant locally flat Lipschitz function, and thus 1-criticality can be defined as a purely metric notion in this way. Of course, if $M$ is rectifiably-connected, meaning every pair of points can be joined by a finite-length curve, then $M$ fails to be 1-critical. The obvious question is whether some sort of converse is true, and this problem was already posed in Whitney's original paper \cite{Whitney_1935} where he wrote ``\textit{it would be interesting to discover how far from rectifiable a closed set must be [to be 1-critical]}" (see also \cite[Q]{Norton_1989}). We provide a possible solution to Whitney's problem as an application of Theorem~\ref{ThmA}.

\begin{maintheorem} \label{ThmE}
A compact metric space fails to be 1-critical if and only if it is transfinitely almost-rectifiably-connected.
\end{maintheorem}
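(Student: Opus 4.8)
The plan is to read ``transfinitely almost-rectifiably-connected'' through the transfinite tower of metric quotients it defines, proving the two implications by a descent argument for locally flat functions and by an application of Theorem~\ref{ThmA} to the terminal space of that tower. So set $M_0=M$, and given a compact metric space $M_\alpha$ let $M_{\alpha+1}$ be the space obtained from $M_\alpha$ by collapsing each maximal rectifiably connected subset (more generally: each maximal connected subset of $\sigma$-finite Hausdorff $1$-measure) to a point, endowing the quotient with the chain-infimum pseudometric, and then identifying points at distance zero; at a limit ordinal $\lambda$ take the natural limit, i.e.\ $M$ equipped with $\inf_{\alpha<\lambda}\rho_\alpha$ (where $\rho_\alpha$ is the pullback to $M$ of the metric of $M_\alpha$) with its zero-set collapsed. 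Each $M_\alpha$ is compact and the canonical maps $M_\beta\to M_\alpha$ ($\alpha\le\beta$) are $1$-Lipschitz surjections, so the tower stabilizes at some ordinal by a cardinality argument; $M$ is transfinitely almost-rectifiably-connected exactly when the stable space is a single point.

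For the direction ``transfinitely almost-rectifiably-connected $\Rightarrow$ $M$ supports no nonconstant locally flat Lipschitz function'' I would show by transfinite induction that every locally flat Lipschitz $f\colon M\to\RR$ descends through the tower with a fixed modulus. Fix a nondecreasing $\omega$ with $\omega(0^+)=0$ and $|f(x)-f(y)|\le\omega(d(x,y))\,d(x,y)$ (available since $M$ is compact). At a successor step $f_\alpha$ is constant on the collapsed pieces — along a rectifiable curve $\gamma$ the function $f_\alpha\circ\gamma$, reparametrized by arc length, is locally flat on an interval, hence constant by the fundamental theorem of calculus, and similarly by exhaustion for connected pieces of $\sigma$-finite $\mathcal H^1$ — so $f_\alpha$ passes to a function $f_{\alpha+1}$ on $M_{\alpha+1}$, and for any $\varepsilon>0$ and any near-optimal chain realizing $d_{\alpha+1}([x],[y])$ (whose cross-terms vanish, consecutive chain elements lying in a common collapsed piece)
$$|f_{\alpha+1}([x])-f_{\alpha+1}([y])|\le\textstyle\sum_i|f_\alpha(u_i)-f_\alpha(v_i)|\le\omega\big((1+\varepsilon)d_{\alpha+1}([x],[y])\big)\,(1+\varepsilon)\,d_{\alpha+1}([x],[y]);$$
letting $\varepsilon\to0$, $f_{\alpha+1}$ satisfies the same modulus $\omega$, and limit stages are handled identically via $\inf_\alpha\rho_\alpha$. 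At the top of the tower $f$ lives on a point, so $f$ is constant.

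For the converse, assume $M$ is not transfinitely almost-rectifiably-connected and let $N$ be the stable space of the tower: a compact metric space with at least two points and with no nonconstant rectifiable curve (equivalently, no nontrivial connected subset of $\sigma$-finite $\mathcal H^1$). It suffices to produce a nonconstant locally flat Lipschitz function on $N$, which then pulls back along the $1$-Lipschitz surjection $M\to N$ to such a function on $M$ (local flatness is preserved because the map is $1$-Lipschitz), witnessing $1$-criticality of $M$. If $N$ is purely $1$-unrectifiable, Theorem~\ref{ThmA} applies directly (it even gives locally flat functions separating the points of $N$ uniformly). Otherwise $N$ contains a curve fragment $\varphi\colon K\to N$, i.e.\ a bi-Lipschitz copy $\Gamma=\varphi(K)$ of a positive-measure compact $K\subset\RR$; since $K$ is a Cantor-type set, $\Gamma$ is totally disconnected with infinitely many points, so the indicator of a proper relatively clopen subset of $\Gamma$ is a nonconstant locally flat Lipschitz function on $\Gamma$, which I would then extend to a locally flat Lipschitz function on $N$.

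The main obstacle is precisely this last step — extending a locally flat Lipschitz function (an indicator of a clopen piece of $\Gamma$) from a closed subset to all of $N$ without destroying local flatness; the naive McShane-type extension is a supremum of distance functions and is not locally flat, so one needs either a locally-flat extension theorem or a direct construction exploiting that $N$ has no rectifiable curves (for a tree, for instance, one builds the extension arc-by-arc from endpoint-separating locally flat functions on the subarcs). Together with the routine but nontrivial bookkeeping needed to make the transfinite tower — limit stages, the quotient pseudometrics, the stabilization ordinal — fully rigorous, this is where the real work lies. Finally, the bounded-turning-tree corollary should follow by determining, for such trees, exactly when the tower terminates at a point, which reduces to whether every subarc has $\sigma$-finite Hausdorff $1$-measure.
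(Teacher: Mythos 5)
There is a genuine gap, and you have located it yourself: the extension step at the end of the converse direction. The root of the problem is that your tower collapses the wrong objects. Collapsing maximal rectifiably connected subsets (or maximal connected $\HH^1$-$\sigma$-finite subsets) does nothing to \emph{curve fragments}, i.e.\ bi-Lipschitz copies of positive-measure compact subsets of $\RR$, which are totally disconnected; so your stabilized space $N$ can perfectly well fail to be purely $1$-unrectifiable, Theorem~\ref{ThmA} does not apply to it, and you are forced into the fallback of extending a locally flat indicator function from a curve fragment $\Gamma\subset N$ to all of $N$. No such locally-flat extension theorem is available (McShane extensions destroy local flatness, as you note), and this is not ``bookkeeping'': it is the entire difficulty of the theorem. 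Note also that the statement being proved uses the paper's specific definition of ``transfinitely almost-rectifiably-connected,'' which is built on a different quotient than yours, so even a repaired version of your argument would still owe a proof that the two towers degenerate to a point simultaneously.

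The paper's construction is designed precisely to avoid this. The pseudometric at each stage is
$d_{ur}(x,y)=\inf_K\lambda([\min(K),\max(K)]\setminus K)$, the infimum over compact $K\subset\RR$ admitting a $1$-Lipschitz $\gamma\colon K\to M$ with $\gamma(\min K)=x$, $\gamma(\max K)=y$; one step of this quotient sends every curve fragment to an $\HH^1$-null set (Proposition~\ref{prop:collapse}), and the quotient map is an isometry exactly when $M$ is purely $1$-unrectifiable (Proposition~\ref{thm:isop1u}). Hence after stabilization (Proposition~\ref{prop:stabilize}) the terminal space $M_{ur}^{(\omega_1)}$ \emph{is} purely $1$-unrectifiable, Theorem~\ref{ThmA} applies to it, and no extension argument is ever needed. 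The identification with $1$-criticality is then made not by descending a fixed function with a modulus through the tower, but by showing that $M_{ur}^{(\omega_1)}$ and Weaver's quotient $M_{\LL}$ satisfy the same universal property among purely $1$-unrectifiable (resp.\ uniformly $\lip$-separated) targets, so the induced $1$-Lipschitz surjections between them are mutually inverse isometries (Theorem~\ref{thm:ML=Mur}); Theorem~\ref{ThmE} is the special case ``$M_{\LL}$ is a point iff $M_{ur}^{(\omega_1)}$ is a point.'' Your descent-of-modulus argument for the forward direction is workable in spirit (it is close in content to the curve-flatness statement $d_{ur}=d_\Gamma$ of Proposition~\ref{prop:dur=dG}), but as written it too depends on your quotient, whose pieces need not form a closed decomposition and whose chain pseudometric you have not shown to produce a compact space.
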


\noindent See Section \ref{section:Whitney} for an explanation of transfinite almost-rectifiable-connectedness. Using Theorem \ref{ThmE}, we are able to prove a quantitative, measure-theoretic characterization of the 1-criticality property in bounded turning trees (see Definition~\ref{def:bttree}), which includes the class of quasiarcs.

\begin{maintheorem} \label{ThmF}
Let $T$ be a 1-bounded turning tree, and for any $x,y \in T$, let $[x,y]$ denote the unique arc joining $x$ and $y$. For all $x,y \in T$,
$$\sup_{\substack{f \in \ball{\lip(T)}}} |f(y)-f(x)| = \inf\,\{\HH^1_\infty(A)\;:\; [x,y] \setminus A \text{ is } \HH^1\text{-}\sigma\text{-finite}\}$$
where $\ball{\lip(T)}$ is the set of locally flat 1-Lipschitz functions on $T$. In particular, a bounded turning tree fails to be 1-critical if and only if each of its subarcs is $\HH^1$-$\sigma$-finite.
\end{maintheorem}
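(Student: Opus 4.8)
The plan is to establish Theorem~\ref{ThmF} by proving the two inequalities separately, using Theorem~\ref{ThmA} (via Theorem~\ref{ThmAprime}) as the main bridge between the functional-analytic supremum on the left and the metric-geometric infimum on the right. Throughout, recall that in a bounded turning tree $T$, the arc $[x,y]$ carries a natural linear order and a bi-Lipschitz-to-an-interval parametrization of each of its $\HH^1$-rectifiable sub-pieces, while the $\HH^1$-$\sigma$-finite part of $[x,y]$ is exactly the part that can be ``seen'' by length. The quantity $\HH^1_\infty(A)$ for $A$ of small content should morally measure the ``flat residue'' of the arc after the rectifiable part is removed.

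\smallskip

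\textbf{Upper bound ($\leq$).} Fix any $1$-Lipschitz locally flat $f\in\ball{\lip(T)}$ and any $A\subset[x,y]$ such that $[x,y]\setminus A$ is $\HH^1$-$\sigma$-finite; I want $|f(y)-f(x)|\leq\HH^1_\infty(A)$. First I would reduce to the case where $[x,y]\setminus A$ is a countable union of subarcs on which $f$ is determined by its length behaviour. On an $\HH^1$-$\sigma$-finite subarc, $f$ restricted there is a $1$-Lipschitz function of a rectifiable arc; being locally flat, its derivative (in arclength) vanishes $\HH^1$-a.e., so $f$ is constant on each such subarc (this is where local flatness does the work — it is essentially the one-dimensional Lebesgue/Besicovitch-Federer argument, cf. the discussion around Godard's theorem and Bate's theorem in the excerpt). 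Hence the total variation of $f$ along $[x,y]$ is concentrated on (a set arbitrarily close to) $A$. Bounding the oscillation of a $1$-Lipschitz function over a set by the Hausdorff content $\HH^1_\infty$ of that set is a standard estimate: cover $A$ by sets $U_i$ with $\sum\diam(U_i)\leq\HH^1_\infty(A)+\eps$, and control the jumps of $f$ across the ``gaps'' between the constancy intervals using these diameters. Taking infima over $A$ and suprema over $f$ gives ``$\leq$''.

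\smallskip

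\textbf{Lower bound ($\geq$).} This is the part that genuinely needs Theorem~\ref{ThmA}/\ref{ThmAprime}. Fix $x,y$ and $\eps>0$; let $c$ denote the right-hand infimum. I would first show there is a ``worst'' $\HH^1$-$\sigma$-finite complement: choose a decreasing sequence $A_n\subset[x,y]$ of sets realizing the infimum with $\HH^1_\infty(A_n)\to c$ and $[x,y]\setminus A_n$ $\HH^1$-$\sigma$-finite, then pass to $A_\infty=\bigcap_n A_n$; using lower semicontinuity/regularity properties of $\HH^1_\infty$ one gets $\HH^1_\infty(A_\infty)\leq c$ while $[x,y]\setminus A_\infty$ is still $\HH^1$-$\sigma$-finite (a countable union of countable unions). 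Now I want a locally flat $1$-Lipschitz $f$ on $T$ with $|f(y)-f(x)|$ close to $c$. The idea is to build $f$ on the compact metric space $A_\infty$ first: $A_\infty$ should be purely $1$-unrectifiable as a subset of $[x,y]$ carrying no rectifiable subarc of positive measure — this uses the bounded turning / tree structure to argue that any curve fragment inside $[x,y]$ would force a positive-measure rectifiable subarc, contradicting that $[x,y]\setminus A_\infty$ is $\HH^1$-$\sigma$-finite and $\HH^1(A_\infty)$ (not content) being handled appropriately. By Theorem~\ref{ThmA}, locally flat Lipschitz functions separate points of $A_\infty$ uniformly; more precisely, after normalizing I can find a locally flat function on $A_\infty$ whose oscillation between the ``endpoints'' of $A_\infty$ (the infimum and supremum in the arc order) is comparable to $\diam$-type data, and by iterating/summing over a covering of $A_\infty$ by small pieces and using Theorem~\ref{ThmAprime} to approximate the ``identity-like'' $1$-Lipschitz coordinate on each piece by a locally flat one, I can arrange $|f(y)-f(x)|\geq \HH^1_\infty(A_\infty)-\eps\geq c-\eps$. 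Finally extend $f$ from $A_\infty\cup\{x,y\}$ (and the rectifiable complement, on which it is locally constant) to a locally flat $1$-Lipschitz function on all of $T$ — this extension step uses that $T$ is a tree (so extension is done arc by arc, McShane-style, and local flatness is preserved because we only need to interpolate on rectifiable arcs where we set $f$ constant, and near branch points of $T$ the bounded turning condition keeps the Lipschitz constant under control).

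\smallskip

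\textbf{Main obstacle.} The crux is the lower bound, specifically verifying that $A_\infty$ is purely $1$-unrectifiable and, more delicately, that Theorem~\ref{ThmA} can be leveraged to produce a locally flat function whose \emph{oscillation} (not merely separation of a single pair) reaches the full Hausdorff \emph{content} $\HH^1_\infty(A_\infty)$ rather than just $\HH^1(A_\infty)$ or $\diam(A_\infty)$ — content and measure genuinely differ here, and matching the content requires choosing an efficient cover of $A_\infty$ and adding up locally flat ``bumps'' supported near each cover element while keeping the global Lipschitz constant equal to $1$; controlling interference between bumps on an arc (where they are linearly ordered, which helps) and across branch points of the tree (which needs the bounded turning estimate) is where the real work lies. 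The ``in particular'' statement then follows immediately: a bounded turning tree is $1$-critical iff it supports a nonconstant locally flat Lipschitz function iff the supremum on the left is positive for some $x,y$ iff $\HH^1_\infty$ of the irreducible residual set is positive for some subarc, which by the displayed formula is equivalent to some subarc $[x,y]$ failing to be $\HH^1$-$\sigma$-finite.
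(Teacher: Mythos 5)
Your upper-bound direction ($\sup \leq \inf$) is essentially the paper's argument: for $f\in\ball{\lip(T)}$ one writes $|f(y)-f(x)|\leq\diam(f([x,y]))=\HH^1_\infty(f([x,y]))\leq\HH^1_\infty(f(A))+\HH^1_\infty(f([x,y]\setminus A))\leq\HH^1_\infty(A)$, using that a locally flat Lipschitz function maps an $\HH^1$-$\sigma$-finite set to a Lebesgue-null set; the reduction from the tree to the arc $[x,y]$ is done via the $1$-Lipschitz first-point-of-contact retraction. One small correction: $[x,y]\setminus A$ need not be a union of subarcs, so ``$f$ is constant on each such subarc'' is not the right formulation, but the content estimate above repairs this.

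The lower bound is where your route diverges from the paper's, and it has a genuine gap. Your plan is to produce a minimizer $A_\infty$ with $\HH^1_\infty(A_\infty)=c$, argue that $A_\infty$ is purely $1$-unrectifiable, apply Theorem~\ref{ThmA} on $A_\infty$, and then \emph{extend} the resulting locally flat function to $T$. First, $A_\infty$ (the intersection of an arbitrary minimizing sequence) need not be purely $1$-unrectifiable, and removing its curve fragments to fix this cannot be done in one step --- the union of all removable $\sigma$-finite pieces need not be $\sigma$-finite --- which is precisely why the paper introduces the transfinite sequence $M_{ur}^{(\alpha)}$. Second, and more fatally, a locally flat function on a purely $1$-unrectifiable compact $A\subset[x,y]$ with $\HH^1$-$\sigma$-finite complement does \emph{not} in general extend to a locally flat function on $[x,y]$ with comparable oscillation. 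Example~\ref{ex:bta} is exactly this obstruction: in the arc obtained by filling the gaps of the snowflaked Cantor set $\CC^\beta$ with geodesics, $\CC^\beta$ is purely $1$-unrectifiable of diameter $1$, has $\HH^1$-$\sigma$-finite complement, and by Theorem~\ref{ThmA} carries locally flat $1$-Lipschitz functions of oscillation close to $1$ --- yet the ambient arc is $\HH^1$-$\sigma$-finite and supports no nonconstant locally flat function at all. So ``McShane arc by arc, constant on the rectifiable part'' cannot work (the rectifiable part is interleaved densely with $A_\infty$), and whether extension is possible is the whole content of the theorem, not a routine step. The paper avoids extension entirely: it proves $\sup_f|f(y)-f(x)|=d_\LL(\pi(x),\pi(y))=d_{ur}^{(\omega_1)}(q_{\omega_1}(x),q_{\omega_1}(y))$ (Theorem~\ref{thm:ML=Mur}, where Theorem~\ref{ThmA} is applied to the purely $1$-unrectifiable \emph{quotient} $M_{ur}^{(\omega_1)}$ and the separating functions are pulled back along the $1$-Lipschitz quotient map, which automatically preserves local flatness), and then shows $\inf_A\HH^1_\infty(A)\leq\diam(M_{ur}^{(\omega_1)})=d_{ur}^{(\omega_1)}(q_{\omega_1}(x),q_{\omega_1}(y))$ by the transfinite-induction pullback Lemma~\ref{lem:ordinal}, which splits $q_{\omega_1}^{-1}$ of the quotient into an $\HH^1$-$\sigma$-finite piece plus a piece of content at most $\HH^1_\infty(M_{ur}^{(\omega_1)})$. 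You would need either to reproduce that quotient machinery or to supply a genuinely new extension argument; as written, the lower bound does not go through.
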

\noindent See Section \ref{section:Whitney} for the proofs of Theorems \ref{ThmE} (as Corollary~\ref{cor:Whitney}) and \ref{ThmF} (as Theorem~\ref{thm:bttree}).
Each one of the Theorems \ref{ThmA} - \ref{ThmF} (except \ref{ThmD}) is new even for metric spaces that are subsets of a Euclidean space.
\medskip

\noindent \textbf{Notation.}
Throughout the paper, $M$ will stand for a metric space with metric $d$. We will assume without mention that $M$ is \textit{pointed}, i.e. we have selected a distinguished point $0\in M$.
We will use the notation
\begin{align*}
d(p,A) &= \inf\set{d(p,x):x\in A} \\
[A]_{r} &= \set{x \in M : d(x,A)\leq r} \\
B_r(p) &= \set{x \in M : d(x,p)\leq r} \\
\rad(A) &= \sup\set{d(x,0):x\in A} \\
\diam(A) &= \sup\set{d(x,y):x,y\in A}
\end{align*}
for $p\in M$, $A\subset M$ and $r\geq 0$. 
For convenience of the reader, let us recall the vector spaces
\begin{align*}
\Lip(M)   &= \set{f \in \RR^M : f \mbox{ Lipschitz}} \\
\lip(M)   &= \set{f \in \Lip(M)  : f \mbox{ locally flat}} \\
\Lip_0(M) &= \{f \in \Lip(M): f(0) = 0\} \\
\lip_0(M) &= \{f \in \lip(M): f(0) = 0\}.
\end{align*}
Whenever $V$ is one of the four spaces above, we have the unit ball
\begin{align*}
\ball{V} &= \{f \in V: f \text{ is } 1\text{-Lipschitz}\}.
\end{align*}
Note that the meaning of $\lip(M)$ and $\lip_0(M)$ for noncompact $M$ is not entirely consistent throughout the existing literature. For instance, some authors (e.g. \cite{Kalton_2004}) have used $\lip_0(M)$ to denote the Lipschitz functions $f:M \to \RR$ satisfying the uniform local flatness property
$$\lim_{d(x,y) \to 0} \frac{|f(x)-f(y)|}{d(x,y)} = 0.$$
When $M$ is not compact, this definition is more restrictive than the one we give, but this is of little consequence since our main results concerning $\lip(M)$ are stated only for compact spaces (except for a short departure into proper spaces in Section~\ref{section:duality} where we also require functions in $\lip(M)$ to be flat at infinity; see Definition \ref{def:flat_at_infinity}).

We use the lattice-theoretic notation $f \vee g$ and $f \wedge g$ for the pointwise maximum and minimum of the functions (or constants) $f$ and $g$, respectively. It holds that $f \vee g,f \wedge g \in \ball{\Lip(M)}$ whenever $f,g \in \ball{\Lip(M)}$, and similarly for $\ball{\lip(M)}$.

\subsection{Preliminaries on Lipschitz spaces and Lipschitz-free spaces}
\label{Prelim:Free}
For a metric space $M$ with a distinguished base point $0 \in M$, the \textit{Lipschitz-free space} or \textit{Arens-Eells space} $\lipfree{M}$ is a Banach space constructed around $M$ which is characterized by the following property: any Banach space-valued Lipschitz map $f : M \to X$ vanishing at $0$ can be extended in a unique way to a continuous linear map $\widehat{f} : \lipfree{M} \to X$ whose operator norm is equal to the best Lipschitz constant of $f$. This is often referred to as the ``universal extension property'' of Lipschitz-free spaces. 

There are several ways to construct the Lipschitz-free space over $M$ (see e.g. \cite[Chapter~8]{LipschitzBook}). We will focus on the following one: consider the Banach space $\Lip_0(M)$ equipped with the norm
$$
\lipnorm{f}=\sup\set{\frac{f(x)-f(y)}{d(x,y)} \;:\; x \neq y\in M}
$$
(note that $\lipnorm{\cdot}$ is not a norm on $\Lip(M)$, and it is for this reason that we work with $\Lip_0(M)$ instead). There are evaluation functionals $\delta(x)\in\dual{\Lip_0(M)}$ given by $\delta(x)\colon f\mapsto f(x)$ for $x\in M$, and $\lipfree{M}$ can be realized as the norm-closed linear span of $\set{\delta(x): x \in M}$ in the Banach space $\Lip_0(M)^\ast$. An easy consequence of the universal extension property (taking $X=\RR$) is that $\lipfree{M}$ is an isometric predual of $\Lip_0(M)$, and the corresponding weak$^\ast$ topology on $\ball{\Lip_0(M)}$ coincides with the topology of pointwise convergence. When $M$ is compact, it also agrees with the topology of uniform convergence. Another consequence is that, for any subset $M'\subset M$ containing $0$, $\lipfree{M'}$ may be canonically identified with the closed subspace of $\lipfree{M}$ generated by the evaluation functionals on points of $M'$. These facts will be used repeatedly in the sequel.

The set $\lip_0(M)$ is a norm-closed subspace of $\Lip_0(M)$, and generally it is not weak$^\ast$-closed. 
Another fact we will use implicitly is that $g \circ f \in \lip(M)$ whenever $f:M \to N$ is Lipschitz and $g \in \lip(N)$.

We refer the reader to \cite{Weaver2} for basic properties of Lipschitz-free spaces and to \cite{Godefroy_2015} for a survey on their applications to the nonlinear geometry of Banach spaces.

\subsection{Preliminaries on Hausdorff measure and Hausdorff convergence}
We review the basic properties of Hausdorff measure and content and Hausdorff metric and convergence. For more details, we refer the reader to \cite{Mattila} and \cite{Hyperspaces}.

\begin{definition}[Hausdorff Measure and Content]
For $M$ a separable metric space and $\delta \in (0,\infty]$, define
$$\mathcal{H}^1_\delta(M) := \inf\left\{\sum_{i=1}^\infty \diam(E_i) \;:\; M \subset \bigcup_{i=1}^\infty E_i ,\: \diam(E_i) < \delta\right\},$$
and
$$\mathcal{H}^1(M) := \lim_{\delta \to 0} \mathcal{H}^1_\delta(M).$$
$\mathcal{H}^1(M)$ is called the \textit{Hausdorff 1-measure} of $M$ and $\mathcal{H}^1_\infty(M)$ its \textit{Hausdorff 1-content}.
\end{definition}

\begin{remark}
By replacing each $E_i$ by $[E_i]_{\eps_i}$ with suitable $\eps_i>0$, we see that the definition of $\HH^1_\delta(M)$ is unchanged if we require that the interiors of the sets $E_i$ cover $M$. Hence if $M$ is compact, the definition of $\HH^1_\delta(M)$ is unchanged if we require the cover $\{E_i\}_i$ to be finite instead of countably infinite. We use this well-known fact in Lemma \ref{lem:usc} and in Section \ref{section:lipspu}.

\medskip
As the name suggests, Hausdorff 1-measure is a Borel measure on any metric space. While $\HH_\delta^1$ is in general not a measure for $\delta > 0$, it is $\sigma$-subadditive. Clearly, it always holds that $\HH^1(M) \geq \HH^1_\infty(M)$. The reverse inequality is not true in general, but it clearly does hold that $\HH^1_\infty(M) = 0 \Longleftrightarrow \HH^1(M) = 0$. Moreover, bi-Lipschitz mappings preserve the property of having null Hausdorff 1-measure. Let us also remark that, for $M=\RR$, $\HH^1$ agrees with the Lebesgue measure, which we shall denote by $\lambda$. We will use these facts without mention throughout the document.
\end{remark}

\begin{definition}
For $(\Omega,d)$ a compact metric space, the space of compact subsets of $\Omega$ is called the \textit{hyperspace} of $\Omega$. It is itself a compact metric space (see \cite[Theorems 3.1 and 3.5]{Hyperspaces}) with respect to the \textit{Hausdorff metric} defined by
$$d_H(K_1,K_2) := \inf\{\eps > 0 \;:\; K_1 \subset [K_2]_{\eps},\; K_2 \subset [K_1]_{\eps}\}.$$
\end{definition}

\begin{remark}
It is easy to verify that whenever $X$ is a compact metric space, $K_j,K \subset \Omega$ are compact with $K_j \to K$ with respect to the Hausdorff metric, and $f_j,f: \Omega \to X$ are continuous with $f_j \to f$ uniformly, then $f_j(K_j) \to f(K)$ with respect to the Hausdorff metric on the hyperspace of $X$.
\end{remark}

An important feature of Hausdorff content is its upper semi-continuity with respect to Hausdorff convergence, a feature not enjoyed by Hausdorff measure.

\begin{lemma}[Upper Semi-Continuity of Hausdorff Content]
\label{lem:usc}
Let $(\Omega,d)$ be a compact metric space. Suppose $K_j \to K$ with respect to the Hausdorff metric on the hyperspace of $\Omega$. Then
$$\mathcal{H}^1_\infty(K) \geq \limsup_{j \to \infty} \mathcal{H}^1_\infty(K_j).$$
\end{lemma}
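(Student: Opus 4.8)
The plan is to exploit the ``finite cover'' reformulation of Hausdorff content on compact spaces noted in the Remark, together with the fact that a slight enlargement of a cover of $K$ eventually covers all the $K_j$. Fix $\eps > 0$. By the Remark following the definition of Hausdorff measure, since $K$ is compact we may choose finitely many sets $E_1, \dots, E_n \subset \Omega$ whose interiors cover $K$ and with $\sum_{i=1}^n \diam(E_i) \leq \HH^1_\infty(K) + \eps$; moreover, after replacing each $E_i$ by a slightly larger open set (again as in the Remark, passing to $[E_i]_{\eps_i}$ with $\eps_i$ small enough that the diameters increase by at most $\eps/n$ each), we may assume each $E_i$ is open and $\sum_{i=1}^n \diam(E_i) \leq \HH^1_\infty(K) + 2\eps$.

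Next I would use compactness of $K$ and the Hausdorff convergence $K_j \to K$ to show that $\{E_i\}_{i=1}^n$ covers $K_j$ for all large $j$. Indeed, $U := \bigcup_{i=1}^n E_i$ is an open set containing the compact set $K$, so there is $r > 0$ with $[K]_r \subset U$ (otherwise one could extract a sequence in $\Omega \setminus U$ converging to a point of $K$, contradicting that $\Omega \setminus U$ is closed and disjoint from $K$). Since $d_H(K_j, K) \to 0$, we have $K_j \subset [K]_r \subset U = \bigcup_{i=1}^n E_i$ for all $j$ large enough. For such $j$, the family $\{E_i\}_{i=1}^n$ is an admissible (finite, hence also countable) cover of $K_j$ in the definition of $\HH^1_\infty$, so
$$\HH^1_\infty(K_j) \leq \sum_{i=1}^n \diam(E_i) \leq \HH^1_\infty(K) + 2\eps.$$
Therefore $\limsup_{j \to \infty} \HH^1_\infty(K_j) \leq \HH^1_\infty(K) + 2\eps$, and letting $\eps \to 0$ gives the claim.

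The only mild subtlety — and the step I would treat most carefully — is the passage from an arbitrary finite cover of $K$ to one by \emph{open} sets of nearly the same total diameter, which is exactly the content of the Remark (enlarging $E_i$ to $[E_i]_{\eps_i}$ changes $\diam(E_i)$ to at most $\diam(E_i) + 2\eps_i$, and one chooses the $\eps_i$ summably small). Everything else is a routine compactness argument; in particular note that no appeal to Hausdorff \emph{measure} is needed, which is consistent with the fact that the analogous semicontinuity statement fails for $\HH^1$ itself. One should also handle the degenerate case $\HH^1_\infty(K) = 0$ (e.g.\ $K$ a point), but the argument above already covers it since the inequality $\sum \diam(E_i) \leq 0 + 2\eps$ is still available.
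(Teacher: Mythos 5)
Your proof is correct and follows essentially the same route as the paper's: fix a finite cover of $K$ realizing $\mathcal{H}^1_\infty(K)$ up to $\eps$, use Hausdorff convergence to place $K_j$ in a small neighborhood of $K$ for large $j$, and bound $\mathcal{H}^1_\infty(K_j)$ by the total diameter of a slightly enlarged cover. The only cosmetic difference is that the paper enlarges each $E_i$ to $[E_i]_{\eps/n}$ at the end (paying an extra $2\eps$ in diameter), whereas you enlarge up front and use a Lebesgue-number-type argument so the fixed cover itself covers $K_j$; both work.
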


\begin{proof}
Let $\eps > 0$ be arbitrary. Choose $E_1, \dots , E_n \subset \Omega$ such that $K \subset \bigcup_{i=1}^n E_i$ and $\mathcal{H}^1_\infty(K) + \eps \geq \sum_{i=1}^n \diam(E_i)$. Since $K_j \to K$, there is $J \in \NN$ so that $K_j \subset [K]_{\eps/n}$ for all $j \geq J$. Then $K_j \subset \bigcup_{i=1}^n [E_i]_{\eps/n}$, and hence
\begin{align*}
\limsup_{j \to \infty} \mathcal{H}^1_\infty(K_j) = \inf_{J \in \NN}\sup_{j \geq J}\mathcal{H}^1_\infty(K_j) &\leq \sum_{i=1}^n \diam([E_i]_{\eps/n}) \\
&\leq 2\eps + \sum_{i=1}^n \diam(E_i) \leq 3\eps + \mathcal{H}_\infty^1(K).
\end{align*}
Since $\eps > 0$ was arbitrary, the conclusion follows. 
\end{proof}

\subsection{Preliminaries on rectifiability and Kirchheim's theorems}
\label{Prelim:Kirchheim}

In \cite{Kirchheim_1994}, Kirchheim proved a variety of results concerning rectifiability in metric spaces and differentiation of Lipschitz maps that have since become standard tools in geometric measure theory. Before recalling his results, we introduce the main objects.

\begin{definition}
\label{def:unrectifiable}
Let $(M,d)$ be a metric space. We say that $M$ is \textit{$1$-rectifiable} if it equals the union of countably many Lipschitz images of subsets of $\RR$, up to a set of null $\HH^1$ measure. We say that $M$ is \textit{purely $1$-unrectifiable} if it contains no $1$-rectifiable subset of positive $\HH^1$ measure; equivalently, if $\HH^1(\gamma(A))=0$ for every $A\subset\RR$ and every Lipschitz $\gamma:A\to M$.
\end{definition}

\begin{definition}
Let $A\subset\RR$. The \textit{metric differential} of a Lipschitz map $\gamma: A \to M$ at a limit point $x \in A$ is defined to be the nonnegative number
$$MD(\gamma,x) := \lim_{\substack{y \to x \\ y \in A}} \dfrac{d(\gamma(x),\gamma(y))}{|x-y|},$$
whenever this limit exists. The set of all $x \in A$ for which $MD(\gamma,x)$ exists is denoted $\mathscr{MD}(\gamma)$, and the set of all $x \in \mathscr{MD}(\gamma)$ for which $MD(\gamma,x) > 0$ is denoted $\mathscr{MD}_r(\gamma)$. It follows from \cite[Theorem 2]{Kirchheim_1994} that $\mathscr{MD}(\gamma)$ has full $\lambda$ measure inside $A$.
\end{definition}

There are two fundamental theorems needed for our purposes. Fix a Lipschitz map $\gamma: A \to M$ with $A \subset \RR$ Borel (which implies $\mathscr{MD}(\gamma),\mathscr{MD}_r(\gamma)$ are Borel). 

\begin{theorem}[Bi-Lipschitz Decomposition; \cite{Kirchheim_1994}, Lemma 4]\label{thm:biLipdecomp}
For every $\eps > 0$, there exist a countable set of positive numbers $\{c_i\}_i$ and a countable collection of Borel subsets $\{E_i\}_i$ of $A$ such that
\begin{enumerate}[label={\upshape{(\roman*)}}]
    \item $\bigcup_i E_i = \mathscr{MD}_r(\gamma)$ and
    \item for every $x,y \in E_i$,
    \begin{equation}
    \label{eq:biLipdecomp}
    (1+\eps)^{-1}c_i|x-y| \leq d(\gamma(x),\gamma(y)) \leq (1+\eps)c_i|x-y| .
    \end{equation}
\end{enumerate}
\end{theorem}

\begin{theorem}[Area Formula; \cite{Kirchheim_1994}, Corollary 8]\label{thm:areaformula}
For each $y \in \gamma(A)$, let $|\gamma^{-1}(y)| \in [1,\infty]$ denote the cardinality of the preimage of $\{y\}$ under $\gamma$. Then
$$\int_A MD(\gamma,x)\, d\lambda(x) = \int_{\gamma(A)} |\gamma^{-1}(y)|\, d\HH^1(y).$$
Consequently, $MD(\gamma,x) = 0$ for $\lambda$-almost every $x \in A$ if and only if $\HH^1(\gamma(A)) = 0$.
\end{theorem}

\begin{remark}
The definition of the metric differential and all of Kirchheim's theorems are stated in \cite{Kirchheim_1994} for Lipschitz maps $\gamma$ whose domain is $\RR$ ($\RR^n$, in fact) and whose codomain is a Banach space. However, since every metric space $M$ isometrically embeds into $\ell_\infty(M)$ and every Lipschitz map $\gamma: A \to \ell_\infty(M)$ extends to a Lipschitz map on $\RR$, the results easily generalize to our formulation (see the last paragraph starting on page 113 of \cite{Kirchheim_1994}).
Indeed, to obtain these generalizations, it is enough to observe that the metric differential of $\gamma: A \to X$ and the metric differential of any Lipschitz extension $\tilde{\gamma}: \RR \to X$ agrees for $\lambda$-almost every $x \in A$ by \cite[Theorem 2]{Kirchheim_1994}.
\end{remark}

The next lemma and its corollary are fundamental to our paper and can be easily derived from Kirchheim's theorems. While these are very well known and variants are commonly used in the field (e.g., \cite[Theorem 11.12]{DS_1997} and \cite[Lemma 7.2]{Bate_2020}), we discuss the derivation for the convenience of the reader.

\begin{lemma}\label{lem:Kirchheim}
For every metric space $(M,d)$, Lipschitz map $\gamma: A \to M$ with $A \subset \RR$ Borel and $\HH^1(\gamma(A)) > 0$, and $\eps > 0$, there exist $c > 0$ and a Borel subset $E \subset A$ with $\lambda(E) > 0$ such that $\gamma \restrict_E$ is a $(1+\eps)$-bi-Lipschitz embedding into the metric space $(M, c \cdot d)$. 
\end{lemma}

\noindent Here is how one may derive the lemma. Let $\gamma: A \to M$ be Lipschitz with $\HH^1(\gamma(A)) > 0$, and let $\eps>0$. Then $\lambda(\mathscr{MD}_r(\gamma)) > 0$ by Theorem \ref{thm:areaformula}, and so by Theorem \ref{thm:biLipdecomp} there exist $E_i \subset A$ Borel and $c_i > 0$ such that $\lambda(E_i)>0$ and the inequalities \eqref{eq:biLipdecomp} hold for all $x,y \in E_i$. Then $E := E_i$ and $c := c_i^{-1}$ satisfy the required properties.

Recall that a \textit{curve fragment} is the image of a bi-Lipschitz embedding $\gamma: K \to M$, where $K \subset \RR$ is compact with $\lambda(K) > 0$. In an abuse of terminology, we may also refer to the embedding itself as a curve fragment. The next corollary is used frequently and without mention throughout the article. It
follows easily from Lemma \ref{lem:Kirchheim} and inner regularity of $\lambda$.

\begin{corollary}\label{cor:p1u=nocf}
A metric space is purely 1-unrectifiable if and only if it contains no curve fragment.
\end{corollary}


\section{Proof of Theorem \ref{ThmA}}
\label{section:lipspu}

We recall Theorem \ref{ThmA} and then discuss its proof.

\begin{reptheorem}{ThmA}
Let $M$ be a compact metric space. Then the locally flat Lipschitz functions on $M$ separate points uniformly if and only if $M$ is purely 1-unrectifiable.
\end{reptheorem}

Let us first remark that the proof of the ``only if" implication is easy and well-known. Indeed, if $K$ bi-Lipschitz embeds into $M$ and $\lip(M)$ separates the points of $M$ uniformly, then $\lip(K)$ separates the points of $K$ uniformly. Using Lebesgue's density theorem and fundamental theorem of calculus, it is not difficult to check that, whenever $K \subset \RR$ is compact with positive measure, $\lip(K)$ does not separate the points of $K$ uniformly (see \cite[Example~4.13(b)]{Weaver2} for details). Consequently, $\lip(M)$ cannot separate the points of $M$ uniformly if $M$ contains a curve fragment $\gamma(K)$. In fact, this argument works when $M$ is any metric space; compactness isn't necessary. The real content of Theorem \ref{ThmA} is the ``if" direction, which is implied by the following seemingly stronger statement. 

\begin{theorem}
\label{thm:spu}
If $M$ is compact and purely 1-unrectifiable then, for all $p\in M$ and $\delta > 0$, there exists $g \in \ball{\lip(M)}$ such that $g(x) - g(p) \geq d(p,x) - \delta$ for every $x \in M$.
\end{theorem}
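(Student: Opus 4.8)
Normalizing, we look for $g\in\ball{\lip(M)}$ with $g(p)=0$ and $g(x)\ge d(p,x)-\delta$ for all $x\in M$; equivalently, $d(p,\cdot)$ should lie within $\delta$ from above of a locally flat $1$-Lipschitz function vanishing at $p$. The obstruction is that $d(p,\cdot)$ itself, though $1$-Lipschitz and vanishing at $p$, is typically far from locally flat, and a scale-independent regularization cannot simultaneously keep a good lower bound and produce local flatness (as $M=[0,1]$ shows), so the construction must genuinely exploit that $M$ has no curve fragments.

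The heart of the matter is a quantitative reformulation of pure $1$-unrectifiability for compact spaces. Call a finite sequence $x_0,x_1,\dots,x_m$ in $M$ an $(\eps,\eta)$-\emph{efficient chain} if $s\le d(x_{i-1},x_i)\le\eps$ for each $i$ (for a fixed lower gauge $s=s(\eps)>0$ to exclude trivialities) and $\sum_{i=1}^m d(x_{i-1},x_i)\le(1+\eta)\,d(x_0,x_m)$. The key claim is: for every $\eps,\eta>0$ there is $N=N(\eps,\eta)\in\NN$ such that $M$ contains no $(\eps,\eta)$-efficient chain with $m\ge N$. The length bound $N(\eps,\eta)$ then bounds how much $d(p,\cdot)$ can increase while one moves away from $p$ in $\eps$-steps almost geodesically, which is exactly the budget the construction needs.

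I would prove this claim by contradiction and compactness. If for some $\eps,\eta$ there are $(\eps,\eta)$-efficient chains of unbounded length, view the finite sets $\{x_0,\dots,x_m\}$ as points of the (compact) hyperspace of $M$ and pass to a Hausdorff limit $C$; after a diagonalization over $\eps\downarrow 0$ the fineness condition is inherited scale by scale, and, since an $(\eps,\eta)$-efficient chain has $\HH^1_\infty$ bounded below by a fixed amount, the upper semicontinuity of Hausdorff content (Lemma~\ref{lem:usc}) forces $\HH^1_\infty(C)>0$. A Kirchheim-type argument then upgrades the almost-geodesic structure surviving in $C$ to a bi-Lipschitz copy of a positive-measure compact subset of $\RR$, i.e.\ a curve fragment, contradicting pure $1$-unrectifiability. \emph{This limiting step is where I expect the main difficulty to lie}: one must choose the chains and the gauges $s(\eps)$ so that the limit set is honestly $1$-rectifiable of positive measure rather than degenerate, and verify that ``efficiency'' is stable under the limit --- precisely the point at which compactness of $M$ and Lemma~\ref{lem:usc} are used.

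Granting the claim, one constructs $g$ directly so as to be locally flat, using at each small scale $t$ the bound $N(t,\eta)$ to control the oscillation of $g$ at scale $t$. Fixing $\delta>0$ and a convergent error budget $\sum_k\delta_k\le\delta$, one decomposes $M$ at a suitable sequence of scales into finitely many Borel pieces and, on each decomposition, replaces $d(p,\cdot)$ by a $1$-Lipschitz function that is frozen enough inside each piece to have sublinear oscillation there while still exceeding $d(p,\cdot)-\delta_k$; the efficiency bound, summed over scales, is what permits the freezing without destroying the lower bound. Combining these data into a single $g$ (for instance as an infimal convolution of $d(p,\cdot)$ against a penalty assembled from the decompositions, which the efficiency bound makes sublinear at every scale) yields $g(p)=0$, $g\le d(p,\cdot)$, and $g\ge d(p,x)-\delta$; checking that $g$ is $1$-Lipschitz and that $\sup_{d(x,y)\le t}|g(x)-g(y)|=o(t)$ is then a bookkeeping exercise driven by the claim. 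Finally, Theorem~\ref{thm:spu} applied to an arbitrary pair $p,q$ with $\delta=\eps\,d(p,q)$ gives, after rescaling $g$ by $1/(1-\eps)$, a $(1+\eps)$-Lipschitz locally flat $f$ with $f(q)-f(p)=d(p,q)$, which is the ``if'' direction of Theorem~\ref{ThmA} with separation constant $1$.
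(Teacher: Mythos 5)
Your plan has the right instincts --- compactness plus upper semicontinuity of Hausdorff content to extract a curve fragment from a putative failure, and a path-infimum-type construction for $g$ --- but the key claim on which everything rests is vacuous as stated, and its proposed proof has a genuine gap. First, the claim: once you impose the lower gauge $s=s(\eps)>0$ on the step sizes, the efficiency condition $\sum_{i=1}^m d(x_{i-1},x_i)\le(1+\eta)\,d(x_0,x_m)\le(1+\eta)\diam(M)$ forces $m\le(1+\eta)\diam(M)/s(\eps)$ automatically, so a finite $N(\eps,\eta)$ exists in \emph{every} compact metric space and encodes nothing about rectifiability (dropping the lower gauge instead makes the claim false, since any chain can be padded with arbitrarily small steps). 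The quantity that actually reflects pure 1-unrectifiability is not the number of steps but something like ``chains of mesh $\le\eps$ and efficiency $1+\eta$ can only join points at distance $o(1)$ as $\eps\to0$,'' which is a different statement still requiring proof. Second, the limiting argument: a chain is a finite set, and finite sets have $\HH^1_\infty=0$ (cover each point by a singleton), so Lemma~\ref{lem:usc} applied to Hausdorff limits of the chains themselves yields only $\HH^1_\infty(C)\ge 0$. To make the compactness argument work you must first replace the chains by Lipschitz curves in an ambient convex or geodesic superspace --- e.g.\ the arclength-parametrized polygonal curves through the chain points --- and then apply Arzel\`a--Ascoli together with Lemma~\ref{lem:usc} to the limiting curve's image; this is precisely the content of Lemma~\ref{lem:V}, and it is where the isometric embedding of $M$ into a compact convex set, which your sketch never invokes at this stage, becomes essential.

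Even granting a corrected quantitative claim, the construction of $g$ is the harder half and cannot be dismissed as bookkeeping. The difficulty is to produce a \emph{single} function that is locally flat --- a condition at every point and at all sufficiently small scales simultaneously --- while staying within $\delta$ of $d(p,\cdot)$ from below globally; a scale-by-scale freezing of $d(p,\cdot)$ does not obviously glue into such a function. The paper's solution is to set $g=\phi_{f,p}$ with $\phi_{f,p}(x)=\inf_\gamma\int_\gamma f\,ds$ over curves in the ambient convex set $\Omega$, where $f$ equals $1\wedge(\diam(\Omega)/n)$ on $V_n\setminus V_{n+1}$ for a nested sequence of neighborhoods $V_n$ of $M$ chosen so that every curve of length at most $n$ meets $V_n$ in Hausdorff content less than $\delta 2^{-n}$; local flatness is then automatic, but the lower bound $g(x)\ge d(p,x)-2\delta$ requires the curve-modification lemmas (Lemmas~\ref{lem:ball}--\ref{lem:neighborhood}) and a minimality argument showing that any competitor curve either can be shortened or is so long that even the small weight $c_n$ integrates to at least $d(p,x)$. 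Your infimal-convolution idea is morally of this shape, but without the multi-scale weight and the curve-modification step the crucial lower bound does not follow.
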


We use the remainder of this section to prove Theorem \ref{thm:spu}. The proof occurs in the last subsection, following a host of supporting lemmas. First, we recall a useful method for constructing Lipschitz functions on a metric space $M$ with prescribed local behavior. We describe the method here and summarize the conclusion in Proposition \ref{prop:locallyflat}. The method requires $M$ to be isometrically embedded in a convex subset $\Omega$ of a Banach space (actually, a geodesic metric space would suffice).
However, there is no loss of generality in making this assumption, because every metric space $N$ isometrically embeds into the Lipschitz-free space $\lipfree{N}$. Additionally, when $M$ is compact, there is no loss of generality in assuming that $\Omega$ is also compact, because closed convex hulls of compact subsets of Banach spaces are compact. Before stating the construction, we briefly review the definition of path integrals and length measure for the purpose of setting notation.

\begin{definition}[Length Measure and Path Integrals]
Let $(\Omega,d)$ be a metric space. When $\gamma: [a,b] \to \Omega$ is a Lipschitz curve, we get a \textit{total variation measure} $TV_\gamma$ on $[a,b]$ defined on intervals by
$$
TV_{\gamma}([s,t]) := \sup\set{\sum_{i=1}^n d(\gamma(t_{i-1}),\gamma(t_i)) \;:\; s = t_0 < t_1 < \dots < t_n = t} .
$$
Since $\gamma$ is Lipschitz, $TV_\gamma \leq L\cdot\lambda$ for some $L < \infty$. Pushing forward the measure $TV_\gamma$ under $\gamma$ gives a finite, positive Borel measure $\mu_\gamma$ on $\Omega$, called the \textit{length measure} of $\gamma$. The length measure is invariant with respect to Lipschitz reparametrizations. When $f: \Omega \to \RR$ is bounded Borel, we get a \textit{path integral} defined by
$$\int_\gamma f\, ds := \int_\Omega f\, d\mu_\gamma = \int_a^b (f \circ \gamma)\, dTV_\gamma.$$
The \textit{length} of $\gamma$ is $|\gamma| := \int_\gamma 1\, ds = \mu_\gamma(\Omega) = TV_\gamma([a,b])$.
\end{definition}

We are now ready to state our main method of constructing locally flat Lipschitz functions. The construction is a simplified variation of the one used by Bate in \cite[Section 3]{Bate_2020}.

\begin{definition} \label{def:phidef}
Given a convex subset $\Omega \subset X$ of a Banach space $X$, $p \in \Omega$, and a bounded Borel function $f: \Omega \to [0,\infty)$, we define a function ${\phi_{f,p}: \Omega \to [0,\infty)}$ by
$$\phi_{f,p}(x) := \inf_\gamma \int_\gamma f\, ds,$$
for $x\in\Omega$, where the infimum is over all $a \leq b \in \RR$ and Lipschitz curves $\gamma: [a,b] \to \Omega$ with $\gamma(a) = p$ and $\gamma(b) = x$.
\end{definition}

The function $\phi_{f,p}$ should be thought as an ``antiderivative" of $f$ of sorts. Note that $\phi_{f,p}(p)$ vanishes at $p$. Let us quickly check that $\phi_{f,p}$ is Lipschitz. Let $x,y \in \Omega$. Without loss of generality, we may assume $\phi_{f,p}(x) \leq \phi_{f,p}(y)$. Let $\eps > 0$, and choose $\gamma: [a,b] \to \Omega$ with $\gamma(a) = p$, $\gamma(b) = x$, and $\int_\gamma f\, ds < \phi_{f,p}(x) + \eps$. Let
$$[x,y] := \{(1-t)x+ty \in X \;:\; t \in [0,1]\}$$
denote the line segment in $X$ connecting $x$ and $y$. Since $x,y \in \Omega$ and $\Omega$ is convex, $[x,y] \subset \Omega$. Let $\gamma_{[x,y]}: [b,b+\|x-y\|] \to [x,y]$ denote the unit speed parametrization starting at $x$ and ending at $y$. Then we create a new Lipschitz curve ${\tilde{\gamma}: [a,b+\|x-y\|] \to \Omega}$ by concatenating $\gamma$ with $\gamma_{[x,y]}$. Specifically,
$$
\tilde{\gamma}(t) := \begin{cases} \gamma(t) &\text{, if $t \in [a,b]$} \\ \gamma_{[x,y]}(t) &\text{, if $t \in [b,b+\|x-y\|]$} \end{cases} .
$$
It is easy to see that $\tilde{\gamma}$ is Lipschitz, $\tilde{\gamma}(a) = p$, and $\tilde{\gamma}(b+\|x-y\|) = y$. Thus, $\phi_{f,p}(y) \leq \int_{\tilde{\gamma}} f\, ds$. Then we have
\begin{align*}
|\phi_{f,p}(y)-\phi_{f,p}(x)| &= \phi_{f,p}(y)-\phi_{f,p}(x) < \int_{\tilde{\gamma}} f\, ds - \int_\gamma f\, ds + \eps\\
&= \int_{\gamma_{[x,y]}} f\, ds + \eps \leq \int_{\gamma_{[x,y]}} \|f\|_{L_\infty([x,y])}\, ds +\eps \\
&= \|f\|_{L_\infty([x,y])} |{\gamma_{[x,y]}}| + \eps = \|f\|_{L_\infty([x,y])} \|x-y\| + \eps.
\end{align*}
Since $\eps > 0$ was arbitrary, this shows
$$ |\phi_{f,p}(y)-\phi_{f,p}(x)| \leq \|f\|_{L_\infty([x,y])} \|x-y\| .$$
This inequality easily implies the following proposition.

\begin{proposition} \label{prop:locallyflat} 
Let $\Omega$ be a convex subset of a Banach space. For all $M \subset \Omega$, $p \in \Omega$, and $f: \Omega \to [0,1]$ Borel with $\lim\limits_{r \to 0} \sup\limits_{x \in [M]_r} f(x) = 0$, we have $\phi_{f,p}(p) = 0$ and $\phi_{f,p}\restrict_M \in \ball{\lip(M)}$.
\end{proposition}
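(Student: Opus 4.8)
The plan is to prove Proposition~\ref{prop:locallyflat} directly from the displayed Lipschitz estimate $|\phi_{f,p}(y)-\phi_{f,p}(x)| \leq \|f\|_{L_\infty([x,y])}\|x-y\|$ that was just established, together with the hypothesis $\lim_{r\to 0}\sup_{x\in[M]_r}f(x)=0$. Since $f$ takes values in $[0,1]$, the global estimate already gives $\phi_{f,p}\in\Lip_0(\Omega)$ with $\lipnorm{\phi_{f,p}}\leq 1$ (and $\phi_{f,p}(p)=0$ is immediate from the constant curve at $p$), so $\phi_{f,p}\restrict_M$ is $1$-Lipschitz; the only real content is verifying local flatness of the restriction to $M$.

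First I would fix $p_0\in M$ and $\eps>0$. By the hypothesis there is $r>0$ such that $f(z)\leq\eps$ for every $z\in[M]_r$. Now take any two points $x,y\in M$ with $d(x,y)=\|x-y\|<r$ (small enough that the segment stays in the right neighbourhood): for every point $z$ on the segment $[x,y]$ we have $\|z-x\|\leq\|x-y\|<r$, so $z\in[M]_r$ since $x\in M$, and hence $\|f\|_{L_\infty([x,y])}\leq\eps$. Plugging this into the displayed estimate yields $|\phi_{f,p}(y)-\phi_{f,p}(x)|\leq\eps\|x-y\|=\eps\,d(x,y)$. This shows that for all $x,y\in M$ with $d(x,y)<r$ we have $|\phi_{f,p}(x)-\phi_{f,p}(y)|/d(x,y)\leq\eps$, which is exactly the (uniform) local flatness condition; in particular it holds for $x,y\to p_0$, giving local flatness at each point of $M$. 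Thus $\phi_{f,p}\restrict_M\in\ball{\lip(M)}$.

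There is essentially no obstacle here: the segment $[x,y]$ connecting two points of $M$ lies within distance $\|x-y\|$ of $M$, so the sup of $f$ over it is controlled by the decay hypothesis as soon as $\|x-y\|$ is small, and the previously derived inequality does the rest. The one point to be slightly careful about is that the estimate was stated for $\|f\|_{L_\infty([x,y])}$ with respect to the segment in $X$, not the metric distance in $M$ — but since $M$ is isometrically embedded in $\Omega\subset X$, $\|x-y\|=d(x,y)$ for $x,y\in M$, and convexity of $\Omega$ guarantees $[x,y]\subset\Omega$ so that $f$ is defined there. Note also that the argument in fact proves the stronger \emph{uniform} local flatness $\lim_{d(x,y)\to 0}|\phi_{f,p}(x)-\phi_{f,p}(y)|/d(x,y)=0$ on $M$, which is consistent with the compact case relevant to Theorem~\ref{thm:spu}.
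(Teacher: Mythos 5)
Your proof is correct and is exactly the argument the paper intends: the paper derives the estimate $|\phi_{f,p}(y)-\phi_{f,p}(x)|\leq\|f\|_{L_\infty([x,y])}\|x-y\|$ and then simply asserts that it ``easily implies'' the proposition, and your observation that the segment $[x,y]$ lies in $[M]_{r}$ whenever $x,y\in M$ with $\|x-y\|<r$ is precisely the missing detail. Nothing further is needed.
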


\subsection{Constructing the Separating Locally Flat Lipschitz Function}

The next lemma is the linchpin of this section. We crucially use the compactness of $\Omega$ in its proof. It is also the only time we directly appeal to the pure 1-unrectifiability of $M$.

\begin{lemma}[Neighborhood Inducing Small Hausdorff Content]
\label{lem:V}
For every compact metric space $\Omega$, purely 1-unrectifiable closed subset $M \subset \Omega$, $L < \infty$, and $\eps > 0$, there exists a compact neighborhood $V$ of $M$ in $\Omega$ such that \linebreak ${\mathcal{H}_\infty^1(\im(\gamma) \cap V) < \eps}$ for every Lipschitz curve $\gamma: [a,b] \to \Omega$ with $|\gamma| \leq L$.
\end{lemma}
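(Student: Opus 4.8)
The plan is to argue by contradiction, using the compactness of the hyperspace of $\Omega$ and the upper semi-continuity of Hausdorff content (Lemma~\ref{lem:usc}) to manufacture a positive-measure Lipschitz image inside $M$, contradicting pure 1-unrectifiability. Since $\Omega$ is compact, each set $V_n := [M]_{1/n} = \set{x \in \Omega : d(x,M) \le 1/n}$ is a \emph{compact} neighborhood of $M$, so it suffices to show that $V_n$ has the desired property for \emph{some} $n \in \NN$. Suppose not: then for every $n$ there is a Lipschitz curve $\gamma_n \colon [a_n,b_n] \to \Omega$ with $|\gamma_n| \le L$ and $\mathcal{H}^1_\infty(K_n) \ge \eps$, where $K_n := \im(\gamma_n) \cap V_n$ is a compact (and, since $\eps>0$, nonempty) subset of $\Omega$. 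After reparametrizing each $\gamma_n$ by a constant multiple of its arc length — which changes neither $\im(\gamma_n)$ nor $|\gamma_n|$ — we may assume every $\gamma_n$ is an $L$-Lipschitz map $[0,1] \to \Omega$.

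Next I would pass to a limit on both levels simultaneously. The curves $\gamma_n$ are equi-Lipschitz and take values in the compact space $\Omega$, so by Arzel\`a--Ascoli a subsequence converges uniformly to an $L$-Lipschitz curve $\gamma \colon [0,1] \to \Omega$; passing to a further subsequence, the compactness of the hyperspace of $\Omega$ yields $K_n \to K$ in the Hausdorff metric for some compact $K \subset \Omega$. Lemma~\ref{lem:usc} then gives
$$\mathcal{H}^1(K) \ge \mathcal{H}^1_\infty(K) \ge \limsup_{n \to \infty} \mathcal{H}^1_\infty(K_n) \ge \eps > 0.$$
It remains to check $K \subset M$ and $K \subset \im(\gamma)$. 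The first is immediate: any $x \in K$ is a limit of points $x_n \in K_n \subset V_n$ with $d(x_n,M) \le 1/n$, hence $d(x,M) = 0$ and $x \in M$ as $M$ is closed. For the second, observe that $\im(\gamma_n) = \gamma_n([0,1]) \to \gamma([0,1]) = \im(\gamma)$ in the Hausdorff metric (the remark following the definition of the hyperspace, applied with the constant sequence of domains $[0,1]$); since $K_n \subset \im(\gamma_n)$ and $K_n \to K$, every point of $K$ has zero distance to the compact set $\im(\gamma)$, so $K \subset \im(\gamma)$.

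Finally, set $A := \gamma^{-1}(K) \subset [0,1]$, a compact set. Using $K \subset \im(\gamma)$ we get $\gamma(A) = K$, so $\gamma\restrict_A \colon A \to M$ is a Lipschitz map from a (compact) subset of $\RR$ with $\mathcal{H}^1(\gamma(A)) = \mathcal{H}^1(K) > 0$, contradicting the pure 1-unrectifiability of $M$; this proves the lemma. The main obstacle — and the reason compactness of $\Omega$ is essential — is precisely the simultaneous passage to the limit: one must extract a single subsequence along which both the curves converge uniformly and the truncated images $K_n$ converge in the Hausdorff metric, and one must ensure the content does not collapse in the limit (hence content rather than measure, via Lemma~\ref{lem:usc}) while the limit set still lands inside both $M$ and the image of the limit curve.
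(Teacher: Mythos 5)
Your proof is correct and follows essentially the same strategy as the paper: contradiction, arc-length reparametrization, Arzel\`a--Ascoli plus hyperspace compactness to pass to a limit, Lemma~\ref{lem:usc} to preserve content, and then a contradiction with pure 1-unrectifiability. The only (harmless) difference is that you take the Hausdorff limit of the sets $\im(\gamma_n)\cap V_n$ directly in $\Omega$ and then verify $K\subset\im(\gamma)$, whereas the paper takes the limit of the preimages $\gamma_n^{-1}([M]_{1/n})$ in the parameter interval and pushes forward, which makes that containment automatic.
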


\begin{proof}
Let $\Omega$ and $M$ be as above, and suppose the lemma is false. Then we can find $L < \infty$, $\eps > 0$, and Lipschitz curves $\gamma_n: [a_n,b_n] \to \Omega$ with $|\gamma_n| \leq L$ and $\mathcal{H}_\infty^1(\im(\gamma_n) \cap [M]_{1/n}) \geq \eps$ for every $n$. By parametrizing by arclength on $[0,|\gamma_n|]$ and constant on $[|\gamma_n|,L]$, we may assume each $\gamma_n: [0,L] \to \Omega$ is 1-Lipschitz.
Set $K_n := \gamma_n^{-1}([M]_{1/n}) \subset [0,L]$, so that $\gamma_n(K_n) = \im(\gamma_n) \cap [M]_{1/n}$. 
By the Arzel\`a-Ascoli theorem, we may assume $\gamma_n$ converges to some 1-Lipschitz $\gamma: [0,L] \to \Omega$ uniformly. By compactness of the hyperspace of $[0,L]$, we may also assume that $K_n \to K$ with respect to the Hausdorff metric for some compact $K \subset [0,L]$. It follows that $\gamma_n(K_n) \to \gamma(K)$ with respect to the Hausdorff metric on the hyperspace of $\Omega$. Lemma \ref{lem:usc} implies $\mathcal{H}_\infty^1(\gamma(K)) \geq \eps$. It also holds that $\gamma(K) \subset M$. To see this, let $\eta > 0$ be arbitrary. Choose $n$ large enough so that $d_H(\gamma_n(K_n),\gamma(K)) < \eta$ and $1/n < \eta$. Then $\gamma(K) \subset [\gamma_n(K_n)]_\eta$ and $\gamma_n(K_n) \subset [M]_\eta$, hence $\gamma(K) \subset [M]_{2\eta}$. Since $M$ is closed and $\eta > 0$ was arbitrary, it follows that $\gamma(K) \subset M$. Thus, $\gamma: [0,L] \to \Omega$ is Lipschitz and
$$\mathcal{H}^1(\im(\gamma) \cap M) \geq \mathcal{H}^1(\gamma(K)) \geq \mathcal{H}_\infty^1(\gamma(K)) \geq \eps > 0,$$
contradicting pure 1-unrectifiability of $M$.
\end{proof}

Throughout the remainder of this subsection, let $\Omega$ be a compact, convex subset of a Banach space, with induced metric denoted $d$. Let $M$ be a closed, purely 1-unrectifiable subset of $\Omega$, and fix $\delta>0$.
Let $M \subset \dots \subset V_2 \subset V_1 \subset V_0 = \Omega$ be a decreasing sequence of compact neighborhoods of $M$ such that, for all $n \geq 0$ and Lipschitz curves $\gamma: [a,b] \to \Omega$,
\begin{equation} \label{eq:Vdef}
|\gamma| \leq n \quad\Longrightarrow\quad \HH_\infty^1(\im(\gamma) \cap V_n) < \delta 2^{-n}.
\end{equation}
Such a sequence exists by Lemma \ref{lem:V}.

In the proof of Theorem \ref{thm:spu}, we will use these neighborhoods to construct a bounded Borel function $f: \Omega \to [0,1]$ that is constant on $V_{n-1} \setminus V_n$, and the corresponding Lipschitz function $\phi_{f,p}$ will be used to fulfill Theorem \ref{thm:spu}. To obtain the necessary estimates on $\phi_{f,p}$, we need three lemmas about modifying Lipschitz curves. Each lemma builds off the previous one.

In what follows, if $r>0$ and $\mu_1$, $\mu_2$ are two Borel measures on $\Omega$, then by $\mu_1 \leq  \mu_2 +r$, we mean $\mu_1(A) \leq \mu_2(A) + r$ for every Borel set $A$. Notice that this is equivalent to $\int_\Omega f\,d\mu_{1} \leq \int_\Omega f\,d\mu_{2} + r\|f\|_\infty$ for every
Borel function $f : \Omega \to [0,+\infty)$. Indeed, one implication is trivial by setting $f = 1_A$. For the other, let $\Omega = P \sqcup N$ be a Hahn decomposition with respect to the signed measure $\mu_1-\mu_2$, then
\begin{align*}
\int_\Omega f\,d\mu_1 = \int_\Omega f\,d\mu_2 + \int_\Omega f\,d(\mu_1-\mu_2) &\leq \int_\Omega f\,d\mu_2 + (\mu_1-\mu_2)(P)\|f\|_\infty \\
&\leq \int_\Omega f\,d\mu_2 + r\|f\|_\infty \,.
\end{align*}

\begin{lemma}[Curve Modification in Set of Small Diameter]
\label{lem:ball} 
For any Lipschitz curve $\gamma: [a,b] \to \Omega$, $r \geq 0$, and compact, convex subset $E \subset \Omega$ of diameter $r$, there exists another Lipschitz curve $\tilde{\gamma}: [a,b] \to \Omega$ such that
\begin{itemize}
\item $\tilde{\gamma}$ has the same endpoint values as $\gamma$,
\item $\mu_{\tilde{\gamma}}(E) \leq r$, and
\item for every Borel $A \subset \Omega$, $\mu_{\tilde{\gamma}}(A \setminus E) \leq \mu_\gamma(A \setminus E)$.
\end{itemize}
Consequently, $\mu_{\tilde{\gamma}} \leq \mu_\gamma + r$.
\end{lemma}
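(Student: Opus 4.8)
The plan is to short-circuit, in a single stroke, the entire portion of $\gamma$ lying between its first and its last visit to $E$, replacing it by the straight segment joining those two visit-points: that segment lies inside $E$ by convexity and has length at most $\diam(E)=r$, while the untouched parts of $\gamma$ never meet $E$, so the length measure cannot increase off $E$.

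Concretely, set $C:=\gamma^{-1}(E)\subset[a,b]$, which is closed. If $C=\emptyset$ then $\mu_\gamma(E)=TV_\gamma(\emptyset)=0\le r$ and $\tilde\gamma:=\gamma$ works, so assume $C\ne\emptyset$ and put $c_-:=\min C$ and $c_+:=\max C$. Define $\tilde\gamma:[a,b]\to\Omega$ to agree with $\gamma$ on $[a,c_-]\cup[c_+,b]$, and on $[c_-,c_+]$ to be the affine constant-speed parametrization of the segment $[\gamma(c_-),\gamma(c_+)]$, which lies in $E\subset\Omega$ by convexity of $E$. The three pieces agree at $c_-$ and at $c_+$, so $\tilde\gamma$ is continuous and piecewise Lipschitz, hence Lipschitz, and visibly $\tilde\gamma(a)=\gamma(a)$, $\tilde\gamma(b)=\gamma(b)$.

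Next I would record two structural facts. (i) By minimality of $c_-$ and maximality of $c_+$, $\gamma(t)\notin E$ for $t\in[a,c_-)\cup(c_+,b]$, whereas $\tilde\gamma([c_-,c_+])\subset E$; hence $\tilde\gamma^{-1}(E)=[c_-,c_+]$ and $\tilde\gamma^{-1}(\Omega\setminus E)=[a,c_-)\cup(c_+,b]$. (ii) Since $\tilde\gamma$ coincides with $\gamma$ on each of the intervals $[a,c_-]$ and $[c_+,b]$, and total variation is additive over subintervals and non-atomic, the measures $TV_{\tilde\gamma}$ and $TV_\gamma$ agree on Borel subsets of $[a,c_-)$ and of $(c_+,b]$. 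From (i), $\mu_{\tilde\gamma}(E)=TV_{\tilde\gamma}([c_-,c_+])$ equals the length of the affine parametrization, namely $\|\gamma(c_+)-\gamma(c_-)\|\le\diam(E)=r$, giving the second bullet. For the third, given Borel $A\subset\Omega$, (i) yields $\tilde\gamma^{-1}(A\setminus E)=\bigl(\gamma^{-1}(A\setminus E)\cap[a,c_-)\bigr)\cup\bigl(\gamma^{-1}(A\setminus E)\cap(c_+,b]\bigr)$, a disjoint union of two subsets of $\gamma^{-1}(A\setminus E)$; applying (ii) on each piece and additivity of $TV_\gamma$ gives $\mu_{\tilde\gamma}(A\setminus E)\le TV_\gamma(\gamma^{-1}(A\setminus E))=\mu_\gamma(A\setminus E)$. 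The final assertion is then immediate: for any Borel $A$, $\mu_{\tilde\gamma}(A)=\mu_{\tilde\gamma}(A\cap E)+\mu_{\tilde\gamma}(A\setminus E)\le\mu_{\tilde\gamma}(E)+\mu_\gamma(A\setminus E)\le r+\mu_\gamma(A)$.

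The construction itself is essentially forced; the only point requiring care is the bookkeeping with total variation, i.e.\ the \emph{locality} of $TV$ (that $TV_{\tilde\gamma}\restrict_{[a,c_-]}$ depends only on $\tilde\gamma\restrict_{[a,c_-]}$ and hence equals $TV_\gamma\restrict_{[a,c_-]}$, and similarly on $[c_+,b]$), which is exactly the additivity $TV_\gamma([s,t])+TV_\gamma([t,u])=TV_\gamma([s,u])$ together with non-atomicity, making the single points $c_\pm$ irrelevant. Everything else is elementary.
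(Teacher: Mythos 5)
Your proposal is correct and follows exactly the paper's construction: locate the first and last parameters at which $\gamma$ meets $E$, keep $\gamma$ outside that window, and replace the middle portion by the constant-speed line segment between the two visit-points, which lies in $E$ by convexity. The only difference is that you spell out the total-variation bookkeeping (locality and non-atomicity of $TV_\gamma$) that the paper leaves implicit, and those details are accurate.
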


\begin{proof}
Let $\gamma,r,E$ be as above. If $\im(\gamma) \cap E = \emptyset$, we simply choose $\tilde{\gamma} = \gamma$ and the proof is finished. So assume $\im(\gamma) \cap E \neq \emptyset$. Let
\begin{align*}
s_0 &:= \min\{t \in [a,b]: \gamma(t) \in E\} \\
s_1 &:= \max\{t \in [a,b]: \gamma(t) \in E\}
\end{align*}
which exist by compactness of $E$ and continuity of $\gamma$. Define $\tilde{\gamma}$ on $[a,s_0] \cup [s_1,b]$ to agree with $\gamma$, and on $[s_0,s_1]$ to be the constant speed parametrization of the line segment between $\gamma(s_0)$ and $\gamma(s_1)$. This line segment belongs to $E$ by convexity, and $\tilde{\gamma}$ satisfies the desired properties. 
\end{proof}

\begin{lemma}[Curve Modification in Set of Small Hausdorff Content]
\label{lem:smallHcontent}
For any Lipschitz curve $\gamma: [a,b] \to \Omega$, $\eps > 0$, and compact $K \subset \Omega$ with $\mathcal{H}^1_\infty(\im(\gamma) \cap K) < \eps$, there exists another Lipschitz curve $\tilde{\gamma}: [a,b] \to \Omega$ such that
\begin{itemize}
\item $\tilde{\gamma}$ has the same endpoint values as $\gamma$,
\item $\mu_{\tilde{\gamma}}(K) < \eps$, and
\item $\mu_{\tilde{\gamma}} < \mu_\gamma + \eps$.
\end{itemize}
\end{lemma}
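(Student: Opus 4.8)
The plan is to cover the compact set $\im(\gamma) \cap K$ by finitely many \emph{convex} subsets of $\Omega$ of small total diameter and then apply Lemma~\ref{lem:ball} once for each of them, in succession. Since $\HH^1_\infty(\im(\gamma) \cap K) < \eps$ and $\im(\gamma) \cap K$ is compact, the finite-cover characterization of Hausdorff content recorded in the Remark following the definition of $\HH^1_\delta$ yields sets $E_1,\dots,E_m \subset \Omega$ with $\im(\gamma) \cap K \subset \bigcup_{i=1}^m E_i$ and $\sum_{i=1}^m \diam(E_i) < \eps$ (if $\im(\gamma) \cap K = \emptyset$, simply take $m = 0$ and $\tilde\gamma = \gamma$). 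Replacing each $E_i$ by its closed convex hull $C_i := \overline{\conv(E_i)}$ costs nothing: $C_i$ remains inside the compact convex set $\Omega$ and remains compact, and $\diam(C_i) = \diam(E_i)$ because passing to convex hulls and closures does not increase the diameter in a normed space. Thus $\im(\gamma) \cap K \subset \bigcup_{i=1}^m C_i$, $\sum_{i=1}^m \diam(C_i) < \eps$, and each $C_i$ is an admissible input for Lemma~\ref{lem:ball}.

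Next I would iterate the modification: put $\gamma_0 := \gamma$, and for $k = 1,\dots,m$ let $\gamma_k$ be the curve obtained from Lemma~\ref{lem:ball} applied to $\gamma_{k-1}$ with $E = C_k$ and $r = \diam(C_k)$; set $\tilde\gamma := \gamma_m$. Each step preserves endpoint values, so $\tilde\gamma$ has the same endpoints as $\gamma$, and each step only introduces new image points inside $C_k$, so $\im(\gamma_k) \subset \im(\gamma_{k-1}) \cup C_k$, whence $\im(\tilde\gamma) \subset \im(\gamma) \cup \bigcup_{i=1}^m C_i$. Since the length measure $\mu_{\tilde\gamma}$ is carried by $\im(\tilde\gamma)$ and $\im(\gamma) \cap K \subset \bigcup_{i=1}^m C_i$, it follows that $\mu_{\tilde\gamma}(K \setminus \bigcup_{i=1}^m C_i) = 0$, so everything reduces to bounding $\mu_{\tilde\gamma}(\bigcup_{i=1}^m C_i)$. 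For this I would prove by induction on $k$ that $\mu_{\gamma_k}(\bigcup_{i=1}^k C_i) \le \sum_{i=1}^k \diam(C_i)$: the inductive step splits $\bigcup_{i \le k} C_i$ into $C_k$ and $(\bigcup_{i < k} C_i)\setminus C_k$, bounding the first piece by $\mu_{\gamma_k}(C_k) \le \diam(C_k)$ and the second by the monotonicity $\mu_{\gamma_k}(A \setminus C_k) \le \mu_{\gamma_{k-1}}(A \setminus C_k)$ from Lemma~\ref{lem:ball} (third bullet, with $A = \bigcup_{i<k} C_i$), together with the inductive hypothesis. Taking $k = m$ gives $\mu_{\tilde\gamma}(K) \le \mu_{\tilde\gamma}(\bigcup_{i=1}^m C_i) \le \sum_{i=1}^m \diam(C_i) < \eps$. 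The last estimate $\mu_{\tilde\gamma} < \mu_\gamma + \eps$ comes from iterating the inequality $\mu_{\gamma_k} \le \mu_{\gamma_{k-1}} + \diam(C_k)$ (the ``Consequently'' clause of Lemma~\ref{lem:ball}) over the $m$ steps, which yields $\mu_{\tilde\gamma} \le \mu_\gamma + \sum_{i=1}^m \diam(C_i) < \mu_\gamma + \eps$.

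The main obstacle is the bookkeeping in that induction. The naive approach — bounding $\mu_{\tilde\gamma}(K)$ by $\sum_i \mu_{\tilde\gamma}(C_i)$ and each term by $\diam(C_i)$ — fails, because the $C_i$ may overlap and a modification carried out in $C_j$ can deposit up to $\diam(C_j)$ of length back into a previously-treated $C_i$; so one cannot control the individual masses $\mu_{\tilde\gamma}(C_i)$, only the cumulative mass $\mu_{\gamma_k}(\bigcup_{i \le k} C_i)$, which is precisely what the induction tracks. The two auxiliary facts used along the way — that the length measure of a curve is supported on its image (so that $K \setminus \bigcup_{i=1}^m C_i$ may be discarded) and that the convex hull of a set has the same diameter as the set (so that the passage from $E_i$ to $C_i$ is free) — are routine.
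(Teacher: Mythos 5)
Your proposal is correct and follows essentially the same route as the paper: a finite cover of $\im(\gamma)\cap K$ by compact convex sets of total diameter less than $\eps$, followed by successive applications of Lemma~\ref{lem:ball}, tracking the cumulative mass $\mu_{\gamma_k}\bigl(\bigcup_{i\le k}C_i\bigr)$ via the third bullet of that lemma rather than the individual masses $\mu_{\tilde\gamma}(C_i)$. The paper's proof is the same induction, so there is nothing to add.
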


\begin{proof}
Let $\gamma,\eps,K$ be as above. Since $\mathcal{H}^1_\infty(\im(\gamma) \cap K) < \eps$, there exist finitely many subsets $E_1, \dots E_n \subset \Omega$ of diameters $r_1, \dots r_n \geq 0$ such that $\im(\gamma) \cap K \subset \bigcup_{i=1}^n E_i$ and $\sum_{i=1}^n r_i < \eps$. Since the diameter of $E_i$ is the same as that of its closed convex hull, we may assume each $E_i$ is compact and convex. Apply Lemma \ref{lem:ball} with $\gamma = \gamma$, $r = r_1$, and $E = E_1$ to obtain a Lipschitz curve $\gamma_1: [a,b] \to \Omega$ satisfying
\begin{itemize}
\item $\gamma_1$ has the same endpoint values as $\gamma$,
\item $\mu_{\gamma_1}(E_1) \leq r_1$, and
\item $\mu_{\gamma_1} \leq \mu_\gamma + r_1$.
\end{itemize}
Then we apply Lemma \ref{lem:ball} again with $\gamma = \gamma_1$, $r = r_2$, and $E = E_2$ to obtain a Lipschitz curve $\gamma_2: [a,b] \to \Omega$ satisfying
\begin{itemize}
\item $\gamma_2$ has the same endpoint values as $\gamma_1$, which are the same as $\gamma$'s,
\item $\mu_{\gamma_2}(E_1 \cup E_2) = \mu_{\gamma_2}(E_1 \setminus E_2) + \mu_{\gamma_2}(E_2)\leq \mu_{\gamma_1}(E_1 \setminus E_2) + r_2 \leq r_1+r_2$, and
\item $\mu_{\gamma_2} \leq \mu_{\gamma_1} + r_2 \leq \mu_\gamma + r_1 + r_2$.
\end{itemize}
Repeating this up to $n$ times produces a Lipschitz curve $\gamma_n: [a,b] \to \Omega$ satisfying
\begin{itemize}
\item $\gamma_n$ has the same endpoint values as $\gamma$,
\item $\mu_{\gamma_n}\left(\bigcup_{i=1}^n E_i\right) \leq \sum_{i=1}^n r_i$, and
\item $\mu_{\gamma_n} \leq \mu_\gamma + \sum_{i=1}^n r_i$.
\end{itemize}
Since $\im(\gamma_n)\cap K \subset \bigcup_{i=1}^n E_i$ and $\sum_{i=1}^n r_i < \eps$, $\tilde{\gamma} = \gamma_n$ satisfies the desired properties.
\end{proof}

\begin{lemma}[Curve Modification in $V_n$]
\label{lem:neighborhood}
For any Lipschitz curve $\gamma: [a,b] \to \Omega$ with $\gamma(a) \neq \gamma(b)$, there exist another Lipschitz curve $\tilde{\gamma}: [a,b] \to \Omega$ and $m\in\NN$ such that
\begin{itemize}
\item $\tilde{\gamma}$ has the same endpoint values as $\gamma$,
\item $m-1 < |\tilde{\gamma}|$,
\item $\mu_{\tilde{\gamma}}(V_m) < \delta$, and
\item $\mu_{\tilde{\gamma}} < \mu_\gamma + \delta$.
\end{itemize}
Consequently, for every Borel function $f:\Omega \to [0,1]$ we have 
$$\int_\gamma f\,ds\geq \int_{\tilde{\gamma}}f\,ds- \delta \geq |\tilde{\gamma}|\inf_{\Omega\setminus V_m}f - 2\delta.$$
\end{lemma}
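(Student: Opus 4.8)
The plan is to apply Lemma~\ref{lem:smallHcontent} iteratively, choosing at each step the ``level'' $m$ adaptively as the ceiling of the length of the current curve, so that the defining property~\eqref{eq:Vdef} of the $V_n$ is always available. Since $\gamma(a)\neq\gamma(b)$ we have $|\gamma|\geq d(\gamma(a),\gamma(b))>0$. Set $\gamma_0:=\gamma$; having produced a Lipschitz curve $\gamma_i\colon[a,b]\to\Omega$ with the same endpoints as $\gamma$ (so $\ell_i:=|\gamma_i|\geq d(\gamma(a),\gamma(b))>0$), let $m_i:=\lceil\ell_i\rceil\in\NN$, so that $m_i-1<\ell_i\leq m_i$. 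Because $|\gamma_i|\leq m_i$, \eqref{eq:Vdef} gives $\HH^1_\infty(\im(\gamma_i)\cap V_{m_i})<\delta 2^{-m_i}$, so Lemma~\ref{lem:smallHcontent} applied with $K=V_{m_i}$ and $\eps=\delta 2^{-m_i}$ produces $\gamma_{i+1}\colon[a,b]\to\Omega$ with the same endpoints as $\gamma$, with $\mu_{\gamma_{i+1}}(V_{m_i})<\delta 2^{-m_i}$, and with $\mu_{\gamma_{i+1}}(A)\leq\mu_{\gamma_i}(A)+\delta 2^{-m_i}$ for every Borel $A\subset\Omega$. I stop the iteration at the first index $i$ with $|\gamma_{i+1}|>m_i-1$, and then take $\tilde\gamma:=\gamma_{i+1}$ and $m:=m_i$.

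The key point is that this stopping time is finite. If the iteration has not stopped at stage $j$, then $|\gamma_{j+1}|\leq m_j-1$, hence $m_{j+1}=\lceil|\gamma_{j+1}|\rceil\leq m_j-1<m_j$; a strictly decreasing sequence of positive integers is finite, so we must stop. Moreover, up to the stopping index the levels $m_0>m_1>\dots>m_i$ are pairwise distinct positive integers, so $\sum_{j=0}^i 2^{-m_j}<\sum_{k\geq 1}2^{-k}=1$. Composing the one-step estimates yields, for every Borel $A$, $\mu_{\tilde\gamma}(A)\leq\mu_\gamma(A)+\delta\sum_{j=0}^i 2^{-m_j}<\mu_\gamma(A)+\delta$; in particular $\mu_{\tilde\gamma}<\mu_\gamma+\delta$ and $|\tilde\gamma|<|\gamma|+\delta$. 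The remaining two bullets are immediate: $m-1=m_i-1<|\gamma_{i+1}|=|\tilde\gamma|$ by the stopping rule, and $\mu_{\tilde\gamma}(V_m)=\mu_{\gamma_{i+1}}(V_{m_i})<\delta 2^{-m_i}<\delta$ since $m_i\geq 1$.

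For the ``consequently'' clause, fix a Borel $f\colon\Omega\to[0,1]$. From $\mu_{\tilde\gamma}(A)\leq\mu_\gamma(A)+\delta$ for all Borel $A$, together with $0\leq f\leq 1$, the distribution-function identity $\int_\Omega g\,d\mu=\int_0^1\mu(\{g>t\})\,dt$ gives $\int_{\tilde\gamma}f\,ds\leq\int_\gamma f\,ds+\delta$, i.e. $\int_\gamma f\,ds\geq\int_{\tilde\gamma}f\,ds-\delta$. Next, splitting $\Omega=V_m\cup(\Omega\setminus V_m)$ and using $f\geq 0$,
$$\int_{\tilde\gamma}f\,ds\;\geq\;\Big(\inf_{\Omega\setminus V_m}f\Big)\,\mu_{\tilde\gamma}(\Omega\setminus V_m)\;=\;\Big(\inf_{\Omega\setminus V_m}f\Big)\big(|\tilde\gamma|-\mu_{\tilde\gamma}(V_m)\big)\;\geq\;\Big(\inf_{\Omega\setminus V_m}f\Big)\big(|\tilde\gamma|-\delta\big),$$
and since $0\leq\inf_{\Omega\setminus V_m}f\leq 1$ this is $\geq|\tilde\gamma|\inf_{\Omega\setminus V_m}f-\delta$. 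Chaining the two inequalities yields $\int_\gamma f\,ds\geq|\tilde\gamma|\inf_{\Omega\setminus V_m}f-2\delta$.

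I expect the only genuine difficulty to be the bookkeeping in the second paragraph: one must run the iteration so that the lower bound $m-1<|\tilde\gamma|$ survives — the curve length must not collapse below the chosen level after the modification — while simultaneously keeping the total accrued length under $\delta$. Taking the level to be $\lceil|\gamma_i|\rceil$ and distributing the budget as the geometric weights $\delta 2^{-m_i}$ over the necessarily distinct levels is precisely what makes both constraints hold at once, and it is the reason the precise form of~\eqref{eq:Vdef} (with $\delta 2^{-n}$ on the right) was set up in advance.
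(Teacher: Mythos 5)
Your proof is correct and follows essentially the same route as the paper: iterate Lemma~\ref{lem:smallHcontent} over a strictly decreasing sequence of levels with a geometrically distributed $\delta$-budget, stopping once the modified curve's length exceeds the current level minus one. The paper packages the same descent as a minimality argument (take $m=\min(A)$ for a suitable set $A$ of achievable levels and derive a contradiction from $|\tilde\gamma|\leq m-1$), but the underlying mechanism and estimates are identical to yours.
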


\begin{proof}
Let $\gamma$ be as above. Let $A$ be the set of all $n\in\NN$ such that there exists a Lipschitz curve $\tilde{\gamma}: [a,b] \to \Omega$ satisfying
\begin{itemize}
\item $\tilde{\gamma}$ has the same endpoint values as $\gamma$,
\item $\mu_{\tilde{\gamma}}(V_n) < \delta$, and
\item $\mu_{\tilde{\gamma}} < \mu_\gamma + \delta \sum_{i=n}^\infty 2^{-i}$.
\end{itemize}
Let $n := \lceil |\gamma| \rceil$ be the least integer greater than or equal to $|\gamma|$. Note that $n \geq 1$ since $\gamma(a) \neq \gamma(b)$. The definition \eqref{eq:Vdef} of $V_n$ implies $\mathcal{H}_\infty^1(\im(\gamma) \cap V_n) < \delta2^{-n}$. Then we apply Lemma \ref{lem:smallHcontent} with $\gamma = \gamma$, $\eps = \delta2^{-n}$, and $K = V_n$ to obtain a Lipschitz curve $[a,b] \to \Omega$ that witnesses $n \in A$. Thus $A \neq \emptyset$, and $m := \min(A)$ exists. Let $\tilde{\gamma}$ be a Lipschitz curve witnessing $m \in A$. We will show it must hold that $m-1 < |\tilde{\gamma}|$, which will finish the proof.

Since $\tilde{\gamma}$ has the same endpoint values as $\gamma$ and $\gamma(a) \neq \gamma(b)$, it holds that $|\tilde{\gamma}| > 0$. Hence, we are done if $m-1 = 0$.
In the remaining case, when $m-1\geq 1$, we assume towards a contradiction that $|\tilde{\gamma}| \leq m-1$. Then by definition of $V_{m-1}$, $\mathcal{H}_\infty^1(\im(\tilde{\gamma}) \cap V_{m-1}) < \delta 2^{-m+1}$. Apply Lemma \ref{lem:smallHcontent} with $\gamma = \tilde{\gamma}$, $\eps = \delta 2^{-m+1}$, and $K = V_{m-1}$ to obtain a Lipschitz curve $\gamma_0: [a,b] \to \Omega$ satisfying
\begin{itemize}
\item $\gamma_0$ has the same endpoint values as $\tilde{\gamma}$, which are the same as $\gamma$'s,
\item $\mu_{\gamma_0}(V_{m-1}) < \delta$, and
\item $\mu_{\gamma_0} < \mu_{\tilde{\gamma}} + \delta2^{-m+1} <  \mu_\gamma + \delta \sum_{i=m}^\infty 2^{-i} + \delta 2^{-m+1} = \mu_\gamma + \delta \sum_{i=m-1}^\infty 2^{-i}$.
\end{itemize}
Thus, $\gamma_0$ witnesses $m-1 \in A$ (together with our prior assumption that $m-1 \geq 1$). This contradicts $m = \min(A)$.
 
To prove the last part of the lemma, notice that $$\int_{\tilde{\gamma}} f\, ds = \int_\Omega f\, d\mu_{\tilde{\gamma}} \leq \int_\Omega f\, d\mu_{\gamma} + \delta\|f\|_\infty \leq \int_\gamma f\, ds + \delta.$$
Now, since $\mu_{\tilde{\gamma}}(V_m) < \delta$, we have
$$\int_{\tilde{\gamma}} \mathsf{1}_{\Omega \setminus V_m}\, ds \geq |\tilde{\gamma}| - \delta.$$
Thus, by positivity of $f$, we obtain
\begin{equation*}
\int_{\tilde{\gamma}} f\, ds \geq (|\tilde{\gamma}| - \delta) \inf_{\Omega\setminus V_m}f \geq |\tilde{\gamma}|\inf_{\Omega\setminus V_m}f - \delta.
\qedhere
\end{equation*}
 \end{proof}

We are now prepared to describe the construction of the appropriate bounded Borel function $f: \Omega \to [0,1]$ and use Lemma \ref{lem:V} to get necessary estimates on $\inf_\gamma \int_\gamma f\, ds$.

\subsection{Proof of Theorem \ref{thm:spu}}
\begin{proof}[Proof of Theorem \ref{thm:spu}.]
Let $p \in M$ and $\delta > 0$. Set $c_0 = 1$ and
$$c_n := 1 \wedge \frac{\diam(\Omega)}{n}$$
for $n \geq 1$. 
Define the Borel function $f: \Omega \to [0,1]$ by
$$f:=\begin{cases} c_n & \text{on $V_n \setminus V_{n+1}$} \\ 0 & \text{on $\bigcap_{n=0}^\infty V_n$.} \end{cases}$$
Then since $c_n \searrow 0$, $\lim\limits_{r \to 0} \sup\limits_{x \in [M]_r} f(x) = 0$. Hence, Proposition \ref{prop:locallyflat} implies $g := \phi_{f,p}\restrict_{M} \in \ball{\lip(M)}$.

Let $x \in \Omega$ and $\gamma: [a,b] \to \Omega$ be a Lipschitz curve such that $\gamma(a) = p$, $\gamma(b) = x$.
Then by Lemma \ref{lem:neighborhood}, there is a Lipschitz curve $\tilde{\gamma}: [a,b] \to \Omega$ and $n \geq 0$ (indeed, take $n=m-1)$ such that $\tilde{\gamma}(a) = p$, $\tilde{\gamma}(b) = x$, $n \leq |\tilde{\gamma}|$, and
\begin{equation} \label{eq:a}
\int_\gamma f\, ds  \geq c_n|\tilde{\gamma}| - 2\delta.
\end{equation}
Since $\tilde{\gamma}$ connects $p$ to $x$, it must hold that $d(p,x) \leq |\tilde{\gamma}|$, and thus
$$c_n|\tilde{\gamma}| \geq c_nd(p,x) \vee c_nn.$$
If $\diam(\Omega) \geq n$, then $c_nd(p,x) = d(p,x)$, and if $\diam(\Omega) \leq n$, then $c_nn = \diam(\Omega) \geq d(p,x)$. So, in all cases,
$$c_n|\tilde{\gamma}| \geq d(p,x).$$
This inequality, together with \eqref{eq:a}, the definition of $g$ and the fact that $\gamma$ was arbitrary  yield 
\[
g(x)-g(p)\geq d(p,x)-2\delta,
\]
which we wanted to prove.
\end{proof}


\section{Duality of Lipschitz-free spaces}
\label{section:duality}

We will now use the results from Section \ref{section:lipspu} to obtain a characterization of several Banach space properties in Lipschitz-free spaces over compact spaces. 
One of the goals is to characterize the compact metric spaces $M$ such that the Lipschitz-free space $\lipfree{M}$ is a dual. Not all compacts satisfy this; for instance, $\lipfree{[0,1]}=L_1$ is not a dual space. For all previously known examples where $\lipfree{M}$ is a dual, we actually have
\begin{equation}
\label{eq:duality2}
\lipfree{M} = \dual{\lip_0(M)}.
\end{equation}
Let us remark that $\lip_0(M)$ is usually not a unique predual: for instance, consider the well-known cases where $\lipfree{M}=\ell_1$ (see e.g. \cite[Example 3.10]{Weaver2}), which has a plethora of non-isomorphic preduals.

The study of duality of Lipschitz-free spaces $\lipfree{M}$ can be traced back to the 1960's. After the pioneering works of Cieselski and de Leeuw who studied particular cases (snowflakes of the unit interval and the unit circle, respectively \cite{Cieselski_1960,deLeeuw_1961}), the first important results are due to Jenkins and Johnson \cite{JenkinsPhd,Johnson_1970}, who proved that \eqref{eq:duality2} holds for any compact $M$ when endowed with a snowflake metric.
A powerful sufficient condition for \eqref{eq:duality2} was identified much later by Weaver in \cite{Weaver_1996}, based on observations in \cite{BCD_1987,Hanin_1992}: it is enough that $\lip_0(M)$ separates points of $M$ uniformly.
It is then immediate from Goldstine's theorem that the separation constant is $1$.
Using this condition, Dalet proved \eqref{eq:duality2} for compacts $M$ that are either countable \cite{Dalet_2015} or ultrametric \cite{Dalet_2015_2}. Weaver later gave a much simplified proof of Dalet's result for the countable case \cite[Theorem 4.11]{Weaver2}. 
In turn, three of the named authors used in \cite[Theorem~4.3]{APP_2019} an argument similar to Weaver's to show that \eqref{eq:duality2} holds whenever $M$ is a compact $0$-hyperbolic metric space whose length measure is 0. The same proof can in fact be used, with only slight modifications, to show that \eqref{eq:duality2} is true for any compact $M$ with $\HH^1(M) = 0$.

On the other hand, we may also identify necessary conditions for duality. By Lebesgue's fundamental theorem of calculus, there are no nontrivial locally flat functions on $[0,1]$, and therefore $\lip_0(M)$ is trivial whenever $M$ is geodesic. Godard provided a stronger necessary condition in \cite{Godard_2010} when he proved that $\lipfree{M}$ is isomorphic to $L_1$ for any $M\subset\RR$ with $\lambda(M)>0$. 
Since bi-Lipschitz equivalent metric spaces have isomorphic Lipschitz-free spaces, it follows that $\lipfree{M}$ cannot be a dual space if $M$ contains a curve fragment - that is, $M$ has to be purely 1-unrectifiable for $\lipfree{M}$ to be a dual. More generally, this fact holds for all separable $M$, not just compacts.

Returning to the compact setting, Theorem \ref{thm:spu} now allows us to fill the gap immediately as it shows that, when $M$ is compact, pure 1-unrectifiability of $M$ is also a sufficient condition for $\lip_0(M)$ to separate points of $M$ uniformly, and thus for \eqref{eq:duality2} to hold. In fact, combining it with previously known implications, we deduce that this characterizes some well-known Banach space properties in Lipschitz-free spaces, like the Radon-Nikod\'ym property or the Schur property.

It happens frequently in Lipschitz-free space theory that results for compact metric spaces can be extended to \textit{proper} metric spaces, i.e. such that every closed ball is compact. This is also the case with the characterization of duality, but it requires a slight modification of the definition of the little Lipschitz space.
For the remainder of this section, we will use the following notation:
\begin{definition}
\label{def:flat_at_infinity}
If $M$ is a proper metric space, $\lip_0(M)$ (resp. $\lip(M)$) will denote the space of all functions in $\Lip_0(M)$ (resp. $\Lip(M)$) that are locally flat and moreover \textit{flat at infinity}, i.e. such that
$$\lim_{r\to\infty}\lipnorm{f\restrict_{M\setminus B_r(0)}}=0 .$$
Note that flatness at infinity is automatically satisfied when $M$ is compact.
\end{definition}

\begin{theorem}
\label{th:seven_equivalences}
Let $M$ be a proper metric space. Then the following are equivalent:
\begin{enumerate}[label={\upshape{(\roman*)}}]
\item \label{e:p1u} $M$ is purely 1-unrectifiable,
\item \label{e:lip_spu} $\lip_0(M)$ separates points of $M$ uniformly,
\item \label{e:lip_predual} $\dual{\lip_0(M)}=\lipfree{M}$,
\item \label{e:dual} $\lipfree{M}$ is a dual space,
\item \label{e:rnp} $\lipfree{M}$ has the Radon-Nikod\'ym property,
\item \label{e:schur} $\lipfree{M}$ has the Schur property,
\item \label{e:no_L1} $\lipfree{M}$ contains no isomorphic copy of $L_1$.
\end{enumerate}
\end{theorem}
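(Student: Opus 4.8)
The plan is to organize the proof around the compact case, which is already essentially in hand, and then lift to the proper setting via a compact-determinacy argument. Since $M$ is proper, it is in particular separable and complete, so the easy "necessity of pure 1-unrectifiability" argument recalled after the statement of Theorem \ref{ThmA} applies: if $M$ contains a curve fragment then $\lipfree{M}$ contains an isomorphic copy of $L_1$ (by Godard's theorem, since a compact positive-measure subset of $\RR$ bi-Lipschitz embeds in $M$ and Lipschitz-free spaces of bi-Lipschitz equivalent spaces are isomorphic). This gives the contrapositive of \ref{e:no_L1} $\Rightarrow$ \ref{e:p1u} at once. Conversely, \ref{e:rnp} $\Rightarrow$ \ref{e:no_L1} is immediate because $L_1$ fails the RNP, and \ref{e:dual} $\Leftarrow$ \ref{e:lip_predual} is trivial. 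So the substantive content is to close the loop by proving \ref{e:p1u} $\Rightarrow$ \ref{e:lip_spu} $\Rightarrow$ \ref{e:lip_predual} and \ref{e:dual} $\Rightarrow$ \ref{e:rnp} (or \ref{e:dual} $\Rightarrow$ \ref{e:schur} $\Rightarrow$ \ref{e:rnp}, etc.), all under the properness hypothesis.

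First I would handle \ref{e:p1u} $\Rightarrow$ \ref{e:lip_spu}. In the compact case this is exactly Theorem \ref{thm:spu}: given $p \in M$ and $x \in M$, the function $g \in \ball{\lip(M)}$ satisfying $g(z)-g(p) \geq d(p,z)-\delta$ for all $z$ can be combined with $-g$ (using the lattice operations $g \vee (\text{const})$ and $g \wedge (\text{const})$ to truncate) to produce, after rescaling by $d(p,x)/(d(p,x)-\delta)$ and letting $\delta \to 0$, a $1$-Lipschitz locally flat $h$ with $h(x)-h(p)=d(p,x)$; hence the separation constant is $1$. For the general proper $M$, one must upgrade $g$ to be flat at infinity as well; here I would use properness: $g$ can be arranged to be supported (up to an additive constant) on a bounded neighborhood of $p$ and $x$, by replacing the Borel weight function $f$ in Definition \ref{def:phidef} with one that also vanishes outside a large ball, so that $\phi_{f,p}$ is eventually constant far from $0$ — making it flat at infinity in the modified sense. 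The pure 1-unrectifiability is inherited by the compact closed balls $B_r(0)$, so Lemma \ref{lem:V} and the curve-modification lemmas apply on each $B_r(0)$, and a diagonal/gluing argument over an exhausting sequence of balls produces the required $g$.

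Next, \ref{e:lip_spu} $\Rightarrow$ \ref{e:lip_predual} is the Weaver-type duality argument: when $\lip_0(M)$ separates points of $M$ uniformly with constant $1$, the canonical map $\lipfree{M} \to \dual{\lip_0(M)}$ is an isometric isomorphism — this is standard (\cite[Chapter 4]{Weaver2}, and for proper spaces one uses the flat-at-infinity modification together with \cite{Kalton_2004}). Then \ref{e:lip_predual} $\Rightarrow$ \ref{e:dual} is trivial, so it remains to run the reverse arc. Here I would invoke the compact-determinacy results: by \cite{ANPP_2020} the Schur property of $\lipfree{M}$ holds iff $\lipfree{K}$ has it for every compact $K \subseteq M$, and I would cite (or prove, as Corollary \ref{cor:CR-RNP} promises) the analogous statement for the RNP. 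Combined with Godard, \ref{e:dual} $\Rightarrow$ \ref{e:p1u}: if $M$ is not purely 1-unrectifiable it contains a compact $K \subseteq \RR$ of positive measure, so $L_1 \cong \lipfree{K} \hookrightarrow \lipfree{M}$, and $L_1$ is not isomorphic to a dual space with RNP — but actually the cleanest route is simply $\neg$\ref{e:p1u} $\Rightarrow$ $\neg$\ref{e:dual} because a separable dual space has the RNP, whereas $\lipfree{M} \supseteq L_1$ then fails it. Assembling: \ref{e:p1u} $\Rightarrow$ \ref{e:lip_spu} $\Rightarrow$ \ref{e:lip_predual} $\Rightarrow$ \ref{e:dual} $\Rightarrow$ \ref{e:rnp} $\Rightarrow$ \ref{e:no_L1} $\Rightarrow$ \ref{e:p1u}, with \ref{e:rnp} $\Leftrightarrow$ \ref{e:schur} following from known implications plus compact-determinacy.

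The main obstacle is the passage from compact to proper in the implication \ref{e:p1u} $\Rightarrow$ \ref{e:lip_spu}: Theorem \ref{thm:spu} is stated only for compact $M$, and its proof genuinely uses compactness of $\Omega$ (in Lemma \ref{lem:V}, via Arzelà–Ascoli and compactness of the hyperspace). Extending it requires care — one cannot take $\Omega$ compact when $M$ is unbounded — so the right move is to localize: fix an exhaustion $M = \bigcup_n B_n(0)$, apply the compact machinery on each $B_{n+1}(0)$ to get a locally flat function that behaves like $d(p,\cdot)$ on $B_n(0)$ and is constant outside $B_{n+1}(0)$, and then check that a suitable (finite, since we only need to separate one pair $p,x$) such function is flat at infinity. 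The bookkeeping to ensure the global Lipschitz constant stays $1$ and local flatness survives the gluing is the delicate part, but it is routine once the localization is set up correctly; everything else is an assembly of cited results.
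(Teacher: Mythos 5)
Your proposal is correct and follows essentially the same cycle of implications as the paper: \ref{e:p1u}$\Rightarrow$\ref{e:lip_spu}$\Rightarrow$\ref{e:lip_predual}$\Rightarrow$\ref{e:dual}$\Rightarrow$\ref{e:rnp}$\Rightarrow$\ref{e:no_L1}$\Rightarrow$\ref{e:p1u}, with the Schur property attached via known results and Godard's theorem closing the loop. Two points of divergence are worth recording. First, for the proper-case upgrade of \ref{e:p1u}$\Rightarrow$\ref{e:lip_spu}, the paper does not modify the weight function in Definition \ref{def:phidef} nor run an exhaustion/diagonal argument; it uses Theorem \ref{thm:spu} as a black box on the single compact ball $K=B_{r+2\delta}(p)$ with $r=d(p,q)$, sets $h=g\wedge r$ on $K$ and $h\equiv r$ off $K$, and observes that the lower bound $g(x)\geq d(p,x)-\delta$ together with $1$-Lipschitzness forces $h\equiv r$ already on the annulus $\set{x: d(x,p)\geq r+\delta}$, so the gluing is seamless and $h$ is automatically flat at infinity. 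This is exactly the ``bookkeeping'' you defer to the end; your closing parenthetical (one ball suffices for one pair of points) is the right instinct, and you should commit to it rather than to the exhaustion. Second, for the reverse arc you invoke compact determinacy of the RNP and the Schur property; the paper reserves that machinery (Corollary \ref{cor:CR-RNP} and the result of \cite{ANPP_2020}) for the non-proper case in Theorem \ref{thm:equivalences}, and in the proper setting argues more cheaply: \ref{e:dual}$\Rightarrow$\ref{e:rnp} because a proper space is separable and separable dual spaces have the RNP, and \ref{e:lip_predual}$\Rightarrow$\ref{e:schur} by citing \cite[Proposition 8]{Petitjean_2017} directly. Your route is not circular (the compact-determinacy results are proved independently), but it is heavier than necessary here; both approaches are valid.
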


\begin{proof}
\ref{e:p1u}$\Rightarrow$\ref{e:lip_spu}: The compact case is given by Theorem \ref{thm:spu}. Let us now extend it to the proper case.
Fix two points $p,q\in M$ and $\delta>0$. Let $r=d(p,q)$ and $K=B_{r+2\delta}(p)$. Then $K$ is compact and purely 1-unrectifiable, so by Theorem \ref{thm:spu} there exists $g\in\ball{\lip(K)}$ such that $g(p)=0$ and $g(x)\geq d(p,x)-\delta$ for every $x\in K$. Now define $h:M\rightarrow\RR$ by
$$
h(x) = \begin{cases}
g(x) \wedge r &\text{, if } x\in K \\
r &\text{, if } x\notin K
\end{cases}
$$
Note that $h(x)=r$ if $d(x,p)\geq r+\delta$, so it is easy to check that $h$ is 1-Lipschitz, locally flat, and flat at infinity. Clearly $h(q)-h(p)\geq r-\delta$ so letting $f=h-h(0)$ provides a map in $\ball{\lip_0(M)}$ which separates $p$ and $q$ as required.

\ref{e:lip_spu}$\Rightarrow$\ref{e:lip_predual}: This is contained in \cite[Theorem 4.38]{Weaver2}. An earlier, different proof of the proper case is given in \cite{Dalet_2015_2} - the result is only explicitly stated for the particular case where $M$ is countable, but the argument is valid in general; see Lemmas 2.4 and 2.5 and the first paragraph of the proof of Theorem 2.1 therein.

\ref{e:lip_predual}$\Rightarrow$\ref{e:dual} is trivial.

\ref{e:dual}$\Rightarrow$\ref{e:rnp}: $M$ is separable, hence so is $\lipfree{M}$, and any separable dual Banach space has the RNP (see e.g. \cite[Corollary~2.15]{Pisier}).

\ref{e:lip_predual}$\Rightarrow$\ref{e:schur}: This is implied by \cite[Proposition 8]{Petitjean_2017} (note that $\lip_0(M)$ in that result's statement refers to the space of uniformly locally flat functions, which includes our little Lipschitz space in both the compact and proper settings).

\ref{e:rnp}$\Rightarrow$\ref{e:no_L1} and \ref{e:schur}$\Rightarrow$\ref{e:no_L1} follow from the fact that the Radon-Nikod\'ym and the Schur properties are hereditary and preserved by isomorphisms, and $L_1$ fails both of them.

\ref{e:no_L1}$\Rightarrow$\ref{e:p1u}: As explained above, this follows from Godard's theorem \cite[Corollary 3.4]{Godard_2010}.
\end{proof}

The equivalent conditions in Theorem \ref{th:seven_equivalences} imply a strong $\ell_1$-like behavior of $\lipfree{M}$. In fact, a stronger quantitative version of the Schur property called the \textit{1-strong Schur property} is fulfilled in this case (see \cite{Petitjean_2017} for the definition and Proposition 17 therein for the immediate proof). Let us point out that
$\Free(M)$ is also \weak$^\ast$ asymptotically uniformly convex with power type 1 modulus in this case (see e.g. \cite[Proposition~4.4.2]{ColinThesis}).

There are several other Banach space properties, like the Krein-Milman property, that lie between the RNP or the Schur property and the non-containment of $L_1$ and are therefore equivalent to them under the hypothesis of Theorem \ref{th:seven_equivalences}. We will see later that this equivalence holds in a more general case, so we withhold the discussion of these properties until Section \ref{section:theoremC}.

\subsection{Preduals of \texorpdfstring{$\lipfree{M}$}{F(M)} with additional structure}

For proper $M$, Theorem \ref{th:seven_equivalences} shows that $\lip_0(M)$ is a predual of $\lipfree{M}$ whenever there is one. This doesn't preclude the possibility of there being other preduals.
In fact, $\lip_0(M)$ is never a unique predual unless it is finite-dimensional. Indeed, $\lip_0(M)$ embeds almost-isometrically into $c_0$ by \cite[Lemma 3.9]{Dalet_2015_2}, which implies that it is an M-embedded Banach space (see \cite[Section III.1]{HWW} for the notion and the proof of that fact). Thus \cite[Proposition III.2.10]{HWW} proves that $\lip_0(M)$ is not a unique predual. However, $\lipfree{M}=\dual{\lip_0(M)}$ is then L-embedded and so \cite[Proposition IV.1.9]{HWW} shows that $\lip_0(M)$ is, up to isometric isomorphism, the unique predual of $\lipfree{M}$ that is M-embedded.

We shall now prove that $\lip_0(M)$ is also the unique predual that satisfies the constraint of having an inherited lattice structure. Recall that the spaces $\Lip_0(M)$ and $\Lip(M)$ are vector lattices under the operations $\vee$ and $\wedge$ of pointwise maximum and minimum, with $\Lip(M)=\Lip_0(M)+\lspan\set{\mathsf{1}_M}$ where $\mathsf{1}_M$ denotes the function on $M$ that takes the constant value $1$. A similar statement holds for $\lip_0(M)$ and $\lip(M)$.

We will say that $Y$ is a \textit{linear sublattice} of $\Lip_0(M)$ or $\Lip(M)$ if it is a linear subspace such that $f\vee g\in Y$ (and thus also $f\wedge g\in Y$) whenever $f,g\in Y$. Following \cite{Weaver2}, we say that a linear sublattice of $\Lip_0(M)$ is \textit{shiftable} if $f\vee (g-c\cdot \mathsf{1}_M)\in Y$ whenever $f,g\in Y$ and $c\geq 0$. The artificial-looking condition $c\geq 0$ ensures that the resulting function still vanishes at $0$. It is straightforward to check that $Y$ is a shiftable linear sublattice of $\Lip_0(M)$ if and only if $Y+\lspan\set{\mathsf{1}_M}$ is a linear sublattice of $\Lip(M)$. Thus, shiftable linear sublattices are invariant with respect to a change of base point in $M$. This is not true for all sublattices: consider e.g. the one-dimensional space generated by the function $x\mapsto d(x,0)$.

Let us say that a real-valued function $f$ defined on a metric space is \textit{coercive} if $\abs{f(x)}\rightarrow\infty$ as $d(x,0)\rightarrow\infty$. Notice that coercivity does not depend on the choice of base point, and that any function defined on a bounded metric space is coercive by vacuity. Thus, we will only need the next lemma for the unbounded case:

\begin{lemma}
\label{lm:hb_coercive}
Let $M$ be a proper metric space and let $Y$ be a closed linear sublattice of $\Lip_0(M)$. Suppose that $\ball{Y}$ is \weaks-dense in $\ball{\Lip_0(M)}$. Then $Y$ contains a positive coercive function.
\end{lemma}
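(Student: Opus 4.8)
The goal is to produce a single function $h\in Y$ with $h\geq 0$ and $h(x)\to\infty$ as $d(x,0)\to\infty$. The idea is to build $h$ as a sum of a carefully chosen sequence of functions from $Y$, each of which ``does its job'' on one annulus of $M$ without spoiling what has been arranged on the previous annuli. First I would set up notation: let $R_n=2^n$ and $A_n=\{x\in M: R_n\leq d(x,0)\leq R_{n+1}\}$, so that $M\setminus\{0\}=\bigcup_n A_n$; properness guarantees each $\overline{B_{R_n}(0)}$ is compact. The plan is to find, for each $n$, a function $g_n\in Y$ which is uniformly large (say $\geq n$) on $A_n$, is nonnegative everywhere, and is uniformly small (say $\leq 2^{-n}$) on the ball $B_{R_{n-1}}(0)$; then $h:=\sum_n (g_n\wedge 1)\cdot(\text{normalization})$, or more carefully a weighted sum $\sum_n c_n g_n$ with $c_n$ chosen so the tails converge uniformly on each bounded ball, will be positive, coercive, and in $Y$ because $Y$ is a \emph{closed} linear sublattice — the partial sums lie in $Y$ (finite sums and lattice operations stay in $Y$), and the limit is in $Y$ by norm-closedness.

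\textbf{Producing the building blocks $g_n$.} This is where the two hypotheses on $Y$ are used. Since $Y$ is \weaks-dense in $\Lip_0(M)$ and contains a sublattice structure, I would first approximate the distance-type function $d(\cdot,0)\wedge R_{n+1}$ (which is in $\Lip_0(M)$) by an element $f\in Y$ in the \weaks\ topology, i.e. pointwise; on the compact ball $\overline{B_{R_{n+1}}(0)}$ pointwise convergence of a bounded Lipschitz net is uniform, so I can make $\|f - (d(\cdot,0)\wedge R_{n+1})\|_{\infty,\overline{B_{R_{n+1}}(0)}}$ as small as I like. After such an approximation, $f$ is close to $0$ on a neighborhood of $0$ and close to $R_{n+1}\geq 2R_n$ on $A_n$. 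Now I would truncate and shift using the lattice operations available in $Y$ (being careful that $Y\subset\Lip_0(M)$, so I may only subtract a nonnegative constant): replacing $f$ by $(f-\eta\cdot\mathsf{1}_M)\vee 0$ — wait, $\vee 0$ is fine since $0\in Y$ — kills the small values near $0$ while leaving the large values on $A_n$ essentially intact, and then $f\wedge (\text{const})$ caps it. One subtlety: $(f-\eta\mathsf 1_M)\vee 0$ requires $Y+\lspan\{\mathsf 1_M\}$ to be a sublattice, equivalently $Y$ to be shiftable; if $Y$ is merely a linear sublattice this step is not automatic. I would check whether the ambient argument only needs $Y$ shiftable, or handle it by instead using $f\vee(\eta\mathsf 1_M)$-type operations that are legitimate — likely the cleanest route is: from \weaks-density get $f\in Y$ with $f\geq -\eta$ on $\overline{B_{R_n}(0)}$ and $f\geq R_n$ on $A_n$, then note $f\wedge c$ and $f\vee(-f)=|f|$ are in $Y$ only if $Y$ is a genuine sublattice (which it is), so $|f|\in Y$ is nonnegative, equals $f$ where $f\geq0$, and on $A_n$ where $f$ is large it equals $f\geq R_n\geq n$ (for $n$ large), while on $\overline{B_{R_{n-1}}(0)}$ it is $\leq\eta$ in absolute value by the approximation. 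So $g_n:=|f_n|$ works, with $f_n$ the approximant at scale $n$.

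\textbf{Assembling and the main obstacle.} With $g_n\in Y$, $g_n\geq 0$, $g_n\geq n$ on $A_n$, and $\sup_{\overline{B_{R_{n-1}}(0)}} g_n\leq 2^{-n}$, define $h_N:=g_1\vee g_2\vee\cdots\vee g_N\in Y$. On any fixed ball $\overline{B_{R_k}(0)}$, for $N>n>k+1$ we have $g_n\leq 2^{-n}$ there, so the sequence $(h_N)$ is eventually determined up to $\sum_{n>k+1}2^{-n}$ and hence is Cauchy in sup-norm on every bounded set; but I actually want global norm-convergence in $\Lip_0(M)$ to invoke closedness of $Y$ — a pointwise-increasing bounded-on-balls sequence need not converge in Lipschitz norm. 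The honest fix is the weighted-sum construction: since $\|g_n\|_L=:L_n<\infty$, put $h:=\sum_n 2^{-n}L_n^{-1}g_n$; the partial sums are in $Y$ (finite sums), this series converges in the Lipschitz norm (the norm tails are $\sum 2^{-n}\to 0$), so $h\in Y$ by closedness, $h\geq 0$, and on $A_n$ we get $h(x)\geq 2^{-n}L_n^{-1}\cdot n$ — which unfortunately need not tend to $\infty$ if $L_n$ grows fast. So the real crux, the main obstacle, is reconciling \emph{coercivity} (needs the $g_n$ to contribute a growing amount) with \emph{norm-convergence of the series} (needs their normalized contributions to shrink). The resolution I would pursue: don't sum — instead use a countable sup. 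One shows directly that $h:=\sup_n g_n$ (pointwise supremum of the chosen $g_n$) is a well-defined $1$-Lipschitz-scaled function provided the $g_n$ are uniformly Lipschitz, say all $1$-Lipschitz after rescaling and that near any point only finitely many $g_n$ are nonnegligible; then $h$ is locally a finite sup of the $h_N$'s, equals the $\|\cdot\|_\infty$-limit of $h_N$ on each ball, and a short argument (using that the $h_N$ form a Cauchy sequence in $\Lip_0(M)$ once we also control Lipschitz constants on the overlap annuli, which we can because each $g_n$ is supported-in-effect on $A_{n-1}\cup A_n\cup A_{n+1}$) gives $h_N\to h$ in $\Lip_0(M)$-norm, hence $h\in Y$. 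Coercivity is then immediate: for $x\in A_n$, $h(x)\geq g_n(x)\geq n$. I expect the delicate bookkeeping to be exactly this: arranging the $g_n$ to be simultaneously (i) $\geq n$ on $A_n$, (ii) nonnegative, (iii) with uniformly bounded Lipschitz constant, and (iv) negligibly small off $A_{n-1}\cup A_n\cup A_{n+1}$, so that the countable sup both converges in norm and is coercive — and I'd verify each of (i)–(iv) survives the \weaks-approximation-plus-lattice-truncation step on the relevant compact ball.
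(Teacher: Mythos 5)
Your overall strategy (properness gives uniform approximation on compact balls from \weaks-density, the lattice operation $f\vee(-f)$ or $f\vee 0$ gives positivity, and norm-closedness lets you pass to a limit of elements of $Y$) matches the paper's, and you correctly isolate the crux: coercivity wants the building blocks to contribute a growing amount, while membership in $Y$ wants a norm-convergent combination. But you do not close this gap, and the two resolutions you offer both fail. The weighted sum $\sum_n 2^{-n}L_n^{-1}g_n$ you yourself concede need not be coercive. The countable-sup alternative rests on property (iv), that $g_n$ be ``negligibly small off $A_{n-1}\cup A_n\cup A_{n+1}$'' — but \weaks-density only controls the approximant on a compact set, so you have no control whatsoever on $g_n$ outside $\overline{B_{R_{n+1}}(0)}$, and (iv) cannot be arranged. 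Even granting (iv), the asserted ``short argument'' that $h_N=g_1\vee\dots\vee g_N$ converges to $\sup_n g_n$ in the \emph{Lipschitz} norm is not supplied and is the whole difficulty: $h-h_N=(\sup_{n>N}g_n-h_N)^+$ is supported far from the origin for every $N$ and there is no reason its Lipschitz constant tends to $0$. (There is also a slip in the building-block step: approximating $d(\cdot,0)\wedge R_{n+1}$ does not give something small on $B_{R_{n-1}}(0)$, since the target function equals $d(\cdot,0)$ there, which is as large as $2^{n-1}$.)

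The missing idea, which is how the paper resolves the tension, is to make the building blocks grow \emph{geometrically} while keeping them all $1$-Lipschitz, and to beat geometric weights with a faster geometric gain. Concretely: for each $n$ use the compactness of $B_{4^n}(0)$ and \weaks-density to find a positive $f_n\in\ball{Y}$ with $f_n(x)\geq d(x,0)-1$ on $B_{4^n}(0)$ (approximate the single function $d(\cdot,0)$ within $1$ uniformly on that ball via a finite net, then take $\vee\,0$). Since every $\lipnorm{f_n}\leq 1$, the series $f=\sum_n f_n/2^n$ converges in $\Lip_0(M)$-norm, so $f\in Y$ by closedness, and $f\geq 0$. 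Coercivity then comes for free: if $4^m<d(x,0)\leq 4^{m+1}$ then $f(x)\geq\sum_{n\geq m+1}(d(x,0)-1)/2^n\geq (4^m-1)/2^m\to\infty$. In other words, you should not ask each block to be large by an additive amount $n$ on an annulus; you should ask each block to recover a fixed fraction of $d(\cdot,0)$ on a ball of radius $4^n$, so that the weight $2^{-n}$ still leaves a contribution of order $2^n$.
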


\begin{proof}
Let us first verify the following simple claim:
\begin{claim*} 
Given a positive $f\in\ball{\Lip_0(M)}$, $r>0$ and $\varepsilon>0$, there is a positive $g\in\ball{Y}$ such that $\abs{f(x)-g(x)}\leq\varepsilon$ for every $x\in B_r(0)$.
\end{claim*}
\noindent Indeed, we can find a finite $\varepsilon/3$-net $A$ in $B_r(0)$ because it is a compact set. Since $\ball{Y}$ is \weaks-dense in $\ball{\Lip_0(M)}$, there is $h\in\ball{Y}$ such that $\abs{f(y)-h(y)}\leq\varepsilon/3$ for every $y\in A$. Now if $x\in B_r(0)$, find $y\in A$ with $d(x,y)\leq\varepsilon/3$ and we have
$$
\abs{f(x)-h(x)}\leq\abs{f(x)-f(y)}+\abs{f(y)-h(y)}+\abs{h(y)-h(x)}\leq\varepsilon .
$$
Then $g=h\vee 0$ satisfies the requirements.

Now use the Claim to obtain positive functions $f_n\in\ball{Y}$ for every $n\in\NN$ such that $f_n(x)\geq d(x,0)-1$ for every $x\in B_{4^n}(0)$, and let $f=\sum_{n=1}^\infty f_n/2^n$. Then $f\in Y$ because $Y$ is closed, and if $x\in M$ is such that $4^k<d(x,0)\leq 4^{k+1}$ for some $k\in\NN$ then
$$
f(x)\geq \sum_{n=k+1}^\infty \frac{f_n(x)}{2^n}\geq \sum_{n=k+1}^\infty \frac{d(x,0)-1}{2^n} > \frac{4^k-1}{2^k} > 2^k-1 .
$$
This shows that $f$ is coercive.
\end{proof}

For the next result we need to introduce some notation. Let
$$
\widetilde{M} = \set{(x,y)\in M\times M: x\neq y}
$$
be the set of pairs of different points of $M$, with the topology inherited from $M\times M$. For $(x,y)\in\widetilde{M}$ denote
$$
\mol{xy}=\frac{\delta(x)-\delta(y)}{d(x,y)}
$$
which is a norm $1$ element of $\lipfree{M}$, usually called the \textit{elementary molecule} determined by $x$ and $y$. The \textit{de Leeuw map} \cite{deLeeuw_1961} is the mapping $\Phi$ that takes a function $f\in\Lip_0(M)$ to the function $\Phi f:\widetilde{M}\rightarrow\RR$ defined by
$$
\Phi f(x,y)=\duality{\mol{xy},f}=\frac{f(x)-f(y)}{d(x,y)} .
$$
Clearly $\Phi f$ is continuous and bounded, with $\norm{\Phi f}_\infty=\lipnorm{f}$, and so it can be identified with its continuous extension to $\beta\widetilde{M}$, the Stone-\v{C}ech compactification of $\widetilde{M}$. Thus we may regard $\Phi$ as a linear isometry from $\Lip_0(M)$ into $C(\beta\widetilde{M})$.

With the required notation in place, we may state and prove the following variation of \cite[Theorem 3.43]{Weaver2}:

\begin{lemma}
\label{lm:weaver_lemma}
Let $M$ be a proper metric space and let $Y$ be a shiftable linear sublattice of $\Lip_0(M)$ that contains a coercive function. Let $\zeta\in\beta\widetilde{M}$ and assume that there exists $g\in Y$ such that $\Phi g(\zeta)\neq 0$. Then $\zeta\in\widetilde{M}$ if and only if $\Phi f_i(\zeta)\rightarrow\Phi f(\zeta)$ for every bounded net $(f_i)$ in $Y$ that converges pointwise to $f\in Y$.
\end{lemma}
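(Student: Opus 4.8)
The plan is to prove both implications of the stated equivalence, where the ``only if'' direction is the routine one and the ``if'' direction carries the content. First suppose $\zeta\in\widetilde{M}$, say $\zeta=(x,y)$ with $x\neq y$. Then for any bounded net $(f_i)$ in $Y$ converging pointwise to $f$, we have $\Phi f_i(\zeta)=(f_i(x)-f_i(y))/d(x,y)\to(f(x)-f(y))/d(x,y)=\Phi f(\zeta)$ directly from pointwise convergence at the two points $x$ and $y$; this uses nothing about $Y$ being a sublattice or containing a coercive function. So the work is entirely in the converse.

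For the ``if'' direction, I would argue by contraposition: assume $\zeta\in\beta\widetilde{M}\setminus\widetilde{M}$ and produce a bounded net in $Y$ converging pointwise to some $f\in Y$ but with $\Phi f_i(\zeta)\not\to\Phi f(\zeta)$. The key structural fact to extract from $\zeta\notin\widetilde{M}$ together with properness is that points of $\widetilde{M}$ accumulating at $\zeta$ must either have both coordinates escaping to infinity or have the two coordinates coming arbitrarily close together (or both). Concretely: if there were $r>0$ such that the set $\{(x,y)\in\widetilde{M}: x,y\in B_r(0),\ d(x,y)\geq r'\}$ for some $r'>0$ were in the closure of a neighborhood filter of $\zeta$, then by compactness of $B_r(0)$ (properness!) $\zeta$ would lie in the closure inside $\beta\widetilde{M}$ of a compact subset of $\widetilde{M}$, forcing $\zeta\in\widetilde{M}$. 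So for every $r>0$ and every $\eps>0$, the set $W_{r,\eps}:=\{(x,y)\in\widetilde{M}: \max(d(x,0),d(y,0))>r\}\cup\{(x,y)\in\widetilde{M}: 0<d(x,y)<\eps,\ x,y\in B_r(0)\}$ contains a neighborhood of $\zeta$ in $\beta\widetilde{M}$; I would make this dichotomy precise and then split into the two cases according to which alternative holds ``along $\zeta$''.

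In the escaping-to-infinity case I would use the coercive function $h\in Y$: since $\Phi g(\zeta)\neq 0$ there is $g\in Y$ with $\Phi g$ bounded away from $0$ near $\zeta$, but $\Phi(\alpha h)(\zeta)=\alpha\,\Phi h(\zeta)$, and along points where both coordinates escape $\Phi h$ stays bounded (it is $1$-Lipschitz after normalization) while one can manufacture a bounded net, e.g. the net $(h\wedge n)_{n}$ or truncations $h_n := (-n)\vee h\wedge n$, which converges pointwise to $h$ but whose de Leeuw values at $\zeta$ can be forced to differ from $\Phi h(\zeta)$ because the truncation ``kicks in at infinity'' exactly where $\zeta$ lives — this is where shiftability and the lattice operations are needed to keep the net inside $Y$. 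In the coordinates-colliding case, the relevant net is obtained by exploiting that $\Phi$ of a locally flat-type function vanishes there: more precisely I would take $g$ with $\Phi g(\zeta)=:c\neq 0$ and consider $g_n := g\wedge \tfrac1n d(\cdot, \{g = 0\})$-style truncations, or better, mimic the construction in \cite[Theorem 3.43]{Weaver2}: build $f_i$ from $g$ by the sublattice operations $f\vee(g-c\,\mathsf 1_M)$ so that the $f_i$ agree with $g$ on a large ball but are ``clamped'' on the diagonal-collision region, giving pointwise convergence to $g$ yet $\Phi f_i(\zeta)$ bounded away from $\Phi g(\zeta)$.

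The main obstacle I anticipate is the bookkeeping in the colliding-coordinates case: one must produce an \emph{honest net} in $Y$ (not merely in $\Lip_0(M)$), which is why the hypotheses ``shiftable linear sublattice'' and ``contains a coercive function'' are present, and one must verify pointwise convergence everywhere on $M$ while keeping the de Leeuw values at $\zeta$ pinned away from the limit — this requires choosing the truncation levels and the shift constants $c$ carefully as functions of the net index, using coercivity to control behavior at infinity and the lattice operations to control behavior near the diagonal. I expect the cleanest route is to follow the architecture of Weaver's proof of \cite[Theorem 3.43]{Weaver2} essentially verbatim, checking at each step that the weaker hypothesis of properness (in place of compactness) combined with flatness at infinity and the coercive function supplied by Lemma \ref{lm:hb_coercive} suffices; the coercive function is exactly what replaces the compactness that Weaver used to rule out ``points at infinity'' in $\beta\widetilde{M}$.
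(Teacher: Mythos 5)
Your plan follows the same route as the paper's proof: the forward direction is immediate from pointwise convergence at the two coordinates, and the converse is proved by contraposition following Weaver's Theorem 3.43, with truncations and clamps built from the lattice and shift operations, and with the coercive function from Lemma \ref{lm:hb_coercive} standing in for the compactness Weaver had available.

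Two spots in your sketch would need repair before it becomes a proof. First, your dichotomy is not quite the right case split: the paper fixes a net $(x_i,y_i)\to\zeta$ with $x_i\to\xi$, $y_i\to\eta$ in $\beta M$ and splits on $d(\zeta):=\lim_i d(x_i,y_i)\in\{0\}\cup(0,\infty)\cup\{\infty\}$. When $d(\zeta)=\infty$ only one coordinate need escape to infinity, and the truncations $f_n=(g-n\cdot\mathsf{1}_M)\vee 0$ work for a reason different from the one you give: $\norm{f_n-g}_\infty\leq n$ while the denominator $d(x_i,y_i)$ blows up, so $\Phi f_n(\zeta)=\Phi g(\zeta)$ for each fixed $n$ regardless of where $\xi,\eta$ sit or whether any function value is infinite there. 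Second, and more substantially, in the genuinely new case $0<d(\zeta)<\infty$ with $g(\xi),g(\eta)<\infty$ --- the one place where properness and coercivity are actually needed --- your phrase ``de Leeuw values can be forced to differ'' hides the possibility that $\Phi v(\zeta)=0$ for the coercive $v$. The paper's fix: properness plus $\zeta\notin\widetilde{M}$ forces $d(\xi,0)=d(\eta,0)=\infty$, hence $v(\xi)=v(\eta)=\infty$ for the (nonnegative) coercive $v$, and one then truncates either $v$ (if $\Phi v(\zeta)\neq 0$) or $g+v$ (otherwise), a function with infinite values at $\xi,\eta$ and de Leeuw value $\Phi g(\zeta)\neq 0$, which reduces to the argument of the first case. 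A minor point: your clamping description in the colliding case is stated backwards relative to the paper's (the clamp $(g\wedge(1+\tfrac1n)a)\vee(1-\tfrac1n)a$, recentered at $0$, agrees with $g$ near the collision point and is constant elsewhere, giving $f_n\to 0$ pointwise while $\Phi f_n(\zeta)=\Phi g(\zeta)\neq 0$), but by linearity of $Y$ your reversed version is equivalent.
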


\begin{proof}
The forward implication is obvious, since $\zeta=(x,y)\in\widetilde{M}$ implies $\Phi f_i(\zeta)=\duality{\mol{xy},f_i}\rightarrow\duality{\mol{xy},f}=\Phi f(\zeta)$ whenever $f_i\wsconv f$ in $\Lip_0(M)$. For the backward implication, choose any $\zeta\in\beta\widetilde{M}\setminus\widetilde{M}$ and we will show that there is a bounded sequence $(f_n)$ in $Y$ that converges pointwise to $f$ but fails $\Phi f_n(\zeta)\rightarrow\Phi f(\zeta)$. We will follow the proof of \cite[Theorem 3.43]{Weaver2} closely.

Let $(x_i,y_i)$ be a net in $\widetilde{M}$ that converges to $\zeta$; we may assume that $x_i$ and $y_i$ converge to elements $\xi$ and $\eta$ of $\beta M$. We may also take $g\geq 0$: since $Y$ is a sublattice, $g^{+}=g\vee 0$ and $g^{-}=(-g)\vee 0$ belong to $Y$ and are non-negative, and at least one of $\Phi g^{+}(\zeta)$, $\Phi g^{-}(\zeta)$ must be different from $0$.
Write $d(\zeta)=\lim_i d(x_i,y_i)\in [0,\infty]$, and notice that $\abs{g(\xi)-g(\eta)}=\abs{\Phi g(\zeta)}\cdot d(\zeta)$. We now distinguish three cases:
\smallskip

\textit{Case 1:} $d(\zeta)=0$. Then $g(\xi)=g(\eta)$; denote this value by $a$. Suppose first that $a<\infty$, let
$$
g_n = \pare{g \wedge (1+\tfrac{1}{n})a\cdot \mathsf{1}_M} \vee \pare{1-\tfrac{1}{n}}a\cdot \mathsf{1}_M
$$
and $f_n=g_n-g_n(0)\cdot \mathsf{1}_M\in Y$. Then $\lipnorm{f_n}\leq\lipnorm{g}$ and $f_n(x)$ decreases to $0$ for every $x\in M$. However, for each $n$ we have $\abs{g(x_i)-a}<\frac{1}{n}$ and $\abs{g(y_i)-a}<\frac{1}{n}$ eventually, so $\Phi f_n(x_i,y_i)=\Phi g_n(x_i,y_i)=\Phi g(x_i,y_i)$ eventually and $\Phi f_n(\zeta)=\Phi g(\zeta)$. That is, $f_n\rightarrow 0$ pointwise but $\Phi f_n(\zeta)\rightarrow\Phi g(\zeta)\neq 0$.

If $a=\infty$, take $f_n=(g-n\cdot \mathsf{1}_M)\vee 0\in Y$ instead. Again, $\lipnorm{f_n}\leq\lipnorm{g}$ and $f_n\rightarrow 0$ pointwise, yet $g(x_i),g(y_i)>n$ eventually and so $\Phi f_n(\zeta)=\Phi g(\zeta)$ for each $n$.
\smallskip

\textit{Case 2:} $d(\zeta)=\infty$. Take $f_n=(g-n\cdot \mathsf{1}_M)\vee 0\in Y$ again. In this case,
$$
\lim_i\abs{\Phi f_n(x_i,y_i)-\Phi g(x_i,y_i)}\leq\lim_i\frac{2n}{d(x_i,y_i)}=0
$$
since $\norm{f_n-g}_\infty\leq n$, and so $\Phi f_n(\zeta)=\Phi g(\zeta)$ for fixed $n$.
\smallskip

\textit{Case 3:} $0<d(\zeta)<\infty$. We will show that we may assume that $g(\xi)=g(\eta)=\infty$, so we may then take $f_n=(g-n\cdot \mathsf{1}_M)\vee 0$ yet again and apply the argument from Case 1.

Indeed, assume otherwise. Then $g(\xi)$ and $g(\eta)$ are both finite since $\abs{g(\xi)-g(\eta)}=\abs{\Phi g(\zeta)}\cdot d(\zeta)<\infty$. Now, observe that $\xi$ and $\eta$ cannot both belong to $M$ as $\zeta\notin\widetilde{M}$. If, say, $\xi\notin M$, then $d(\xi,0)=\infty$ because $M$ is proper (if $d(\xi,0)<r$, then the compactness of $B_r(0)$ would imply $\xi\in M$), and $d(\zeta)<\infty$ implies that $d(\eta,0)=\infty$ as well. By hypothesis, there is a coercive function $v\in Y$, and we may assume $v\geq 0$ by replacing it with $\abs{v}\in Y$. Thus $v(\xi)=v(\eta)=\infty$. If $\Phi v(\zeta)\neq 0$, then take $v$ instead of $g$ to get the desired contradiction. Otherwise, use the function $g+v$.
\end{proof}

We may now prove the desired uniqueness result.

\begin{theorem}
\label{th:lattice_predual}
Let $M$ be a proper metric space. If $Y$ is a closed shiftable linear sublattice of $\Lip_0(M)$ such that $\dual{Y}=\lipfree{M}$, then $Y=\lip_0(M)$.
\end{theorem}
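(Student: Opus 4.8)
The plan is to show that any closed shiftable linear sublattice $Y$ with $\dual{Y}=\lipfree{M}$ must coincide with $\lip_0(M)$. The natural strategy is a double inclusion together with a duality/density argument. First I would establish $Y\subseteq\lip_0(M)$, which should be the ``soft'' inclusion: since $\dual{Y}=\lipfree{M}$, the space $Y$ is a predual of $\lipfree{M}$ sitting inside $\Lip_0(M)=\dual{\lipfree{M}}$, and the canonical pairing forces the weak$^*$ topology that $\lipfree{M}$ induces on $Y$ to agree with the pointwise topology. In particular the unit ball $\ball{Y}$ is weak$^*$-compact, hence pointwise-compact, hence pointwise-closed in $\ball{\Lip_0(M)}$. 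Using Lemma~\ref{lm:weaver_lemma} (applied with a coercive function in $Y$, which exists by Lemma~\ref{lm:hb_coercive} once we know $Y$ is weak$^*$-dense — and it is, since it norms $\lipfree{M}$), one shows that every $\zeta\in\beta\widetilde{M}\setminus\widetilde{M}$ on which some $\Phi g$, $g\in Y$, is nonzero is a point of pointwise-continuity failure; but weak$^*$-continuity of evaluation at molecules means $\Phi f_i(\zeta)\to\Phi f(\zeta)$ for bounded pointwise-convergent nets in $Y$. This contradiction (on the relevant $\zeta$) forces $\Phi g(\zeta)=0$ for all $g\in Y$ and all $\zeta$ in the ``corona'' — equivalently, $\limsup_{d(x,y)\to 0}|\Phi g(x,y)|=0$ and the flat-at-infinity condition hold, i.e. $g\in\lip_0(M)$.

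\textbf{The reverse inclusion} $\lip_0(M)\subseteq Y$ is where the real work lies, and here is where Theorem~\ref{thm:spu} (via Theorem~\ref{th:seven_equivalences}) enters. Since $\dual{Y}=\lipfree{M}$, in particular $\lipfree{M}$ is a dual space, so by Theorem~\ref{th:seven_equivalences} $M$ is purely $1$-unrectifiable and $\dual{\lip_0(M)}=\lipfree{M}$ with $\lip_0(M)$ norming. Now we have two weak$^*$-closed subspaces of $\dual{\lipfree{M}}=\Lip_0(M)$, namely $Y$ (weak$^*$-closed because it is a predual's image, i.e.\ weak$^*$-closed unit ball plus Krein--Smulian) and $\lip_0(M)$ (which we must check is weak$^*$-closed in this duality — it is, being the annihilator of nothing special, but more simply: under the pairing with $\lipfree{M}$, $\lip_0(M)=\dual{\lip_0(M)}_*{}^\perp{}^\perp$... ). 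The cleanest route: both $Y$ and $\lip_0(M)$ are $1$-norming weak$^*$-dense subspaces that are preduals of $\lipfree{M}$; I would use the fact that a weak$^*$-closed subspace of $\dual{\lipfree{M}}$ that is a predual of $\lipfree{M}$ is uniquely determined once we pin down enough structure. Concretely, I expect to argue: $Y$ weak$^*$-dense and $\ball Y$ weak$^*$-compact $\Rightarrow$ by Goldstine $\ball Y$ is weak$^*$-dense in $\ball{\Lip_0(M)}$, so $Y$ $1$-norms $\lipfree{M}$; then since $\lip_0(M)$ also $1$-norms and we have established $Y\subseteq\lip_0(M)$, we get that $\lip_0(M)$-predual and $Y$-predual are both $\lipfree{M}$ and $Y\subseteq\lip_0(M)\subseteq\dual{\lipfree{M}}$; a subspace inclusion between two isometric preduals inside the bidual, both weak$^*$-closed with the same dual, must be an equality (if $Y\subsetneq\lip_0(M)$, pick $f\in\lip_0(M)\setminus Y$; weak$^*$-closedness of $Y$ gives $\mu\in\lipfree{M}=\predual Y$ with $\duality{\mu,Y}=0$ but $\duality{\mu,f}\neq 0$; but $\duality{\cdot,Y}=0$ on $\lipfree{M}=\predual{(\lip_0(M))}$ forces $\mu=0$ since $Y$ $1$-norms $\lipfree{M}$ — contradiction).

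\textbf{The main obstacle} I anticipate is the correct handling of the corona $\beta\widetilde{M}\setminus\widetilde{M}$ and making Lemma~\ref{lm:weaver_lemma} do precisely the job of identifying $Y$ with functions that are locally flat \emph{and} flat at infinity — one must be careful that the de~Leeuw-map picture correctly encodes both conditions simultaneously, and that the hypothesis ``$\Phi g(\zeta)\neq 0$ for some $g\in Y$'' is handled (the points where \emph{all} $\Phi g$ vanish are harmless because they impose no constraint distinguishing $Y$ from its would-be-larger candidates). A secondary technical point is verifying weak$^*$-density of $Y$ in $\Lip_0(M)$ at the outset, needed to invoke Lemma~\ref{lm:hb_coercive}: this follows because $\predual{(\dual Y)}=\predual{(\lipfree{M})}$ can be identified with $\lipfree{M}$ in a way compatible with the inclusion $Y\hookrightarrow\Lip_0(M)$, so the weak$^*$ topology on $Y$ from $\dual Y$ matches the restriction of the weak$^*$ topology on $\Lip_0(M)$, and a proper subspace's weak$^*$-closure being all of $\Lip_0(M)$ is equivalent to $Y$ separating points of $\lipfree{M}$, which it does since it norms it. Once these pieces are in place the theorem follows by the inclusion-forcing-equality argument above.
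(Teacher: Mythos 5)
Your overall strategy is the paper's: prove $Y\subseteq\lip_0(M)$ by combining the coercive function from Lemma~\ref{lm:hb_coercive} with Lemma~\ref{lm:weaver_lemma} and the de~Leeuw characterization of $\lip_0(M)$ (vanishing of $\Phi g$ on $\beta\widetilde{M}\setminus\widetilde{M}$), then upgrade the inclusion to equality by Hahn--Banach separation using that $\lip_0(M)$ is itself a predual of $\lipfree{M}$ (Theorem~\ref{th:seven_equivalences}). Your second half is correct and is exactly the paper's (implicit) last step, spelled out; note only that norm-closedness of $Y$ suffices there, and that you have the difficulty inverted: the Hahn--Banach step is the soft one, while $Y\subseteq\lip_0(M)$ is where the work lies.

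The gap is at the single point where the hypothesis $\dual{Y}=\lipfree{M}$ must actually do work, namely in verifying the continuity condition of Lemma~\ref{lm:weaver_lemma} at a corona point $\zeta\in\beta\widetilde{M}\setminus\widetilde{M}$. You justify ``$\Phi f_i(\zeta)\to\Phi f(\zeta)$ for bounded pointwise-convergent nets in $Y$'' by ``weak$^*$-continuity of evaluation at molecules,'' but $\zeta$ is not a molecule: $f\mapsto\Phi f(\zeta)$ is a limit along a net of the functionals $\duality{\mol{x_iy_i},\cdot}$, and a pointwise limit of weak$^*$-continuous functionals need not be weak$^*$-continuous --- indeed, for a general weak$^*$-dense closed shiftable sublattice the desired convergence fails (this is precisely the content of the backward direction of Lemma~\ref{lm:weaver_lemma}). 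The correct argument is: the molecules lie in $\ball{\lipfree{M}}=\ball{\dual{Y}}$, which is $\sigma(\lipfree{M},Y)$-compact by Alaoglu \emph{because} $\dual{Y}=\lipfree{M}$; passing to a subnet, $\mol{x_iy_i}\to\mu$ in that topology for some $\mu\in\lipfree{M}$, so $\Phi f(\zeta)=\duality{\mu,f}$ for all $f\in Y$, and then bounded pointwise convergence $f_i\to f$ gives $\duality{\mu,f_i}\to\duality{\mu,f}$ since $\mu$ lies in $\lipfree{M}$ and not merely in some abstract dual of $Y$. Your substitute assertion that $\ball{Y}$ is weak$^*$-compact, hence pointwise-closed in $\ball{\Lip_0(M)}$, is false: by Goldstine $\ball{Y}$ is weak$^*$-dense in $\ball{\Lip_0(M)}$, so weak$^*$-compactness would force $Y=\Lip_0(M)$, which is impossible whenever $\lip_0(M)\neq\Lip_0(M)$. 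With the compactness applied to the correct ball ($\ball{\lipfree{M}}$ in the topology $\sigma(\lipfree{M},Y)$), your argument closes and coincides with the paper's proof.
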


\begin{proof}
We consider the weak$^\ast$ topology in $\lipfree{M}$ induced by $Y$. Let $\zeta\in\beta\widetilde{M}$, and let $(x_i,y_i)$ be a net in $\widetilde{M}$ that converges to $\zeta$. Then $(\mol{x_iy_i})$ is a net in the unit ball of $\lipfree{M}$, which is a \weaks-compact set, so we can replace $(\mol{x_iy_i})$ with a subnet such that $\mol{x_iy_i}\wsconv\mu$ for some $\mu\in\lipfree{M}$. This means that
$$
\duality{\mu,f} = \lim_i\duality{\mol{x_iy_i},f} = \lim_i\Phi f(x_i,y_i) = \Phi f(\zeta)
$$
for every $f\in Y$. Now let $(f_i)$ be a bounded net in $Y$ that converges pointwise to $f\in Y$. Then $f_i\wsconv f$ in the weak$^\ast$ topology of $\Lip_0(M)$, and so $\duality{\mu,f_i}\rightarrow\duality{\mu,f}$, that is, $\Phi f_i(\zeta)\rightarrow\Phi f(\zeta)$.

We have thus shown that $Y$ satisfies the condition in Lemma \ref{lm:weaver_lemma}. Since it also contains a coercive function by Lemma \ref{lm:hb_coercive}, it follows that either $\zeta\in\widetilde{M}$ or $\mu|_Y=0$. Therefore $\Phi f(\zeta)=0$ for every $f\in Y$ and every $\zeta\in\beta\widetilde{M}\setminus\widetilde{M}$. By \cite[Proposition 4.20]{Weaver2}, this is equivalent to $Y\subset\lip_0(M)$. But $\lip_0(M)$ is also a predual of $\lipfree{M}$ by Theorem \ref{th:seven_equivalences}, so $Y=\lip_0(M)$.
\end{proof}


\section{The Radon-Nikod\'ym property in Lipschitz-free spaces}
\label{section:rnp}

Very recently, a method of ``compact reduction'' was introduced in \cite{ANPP_2020, Gartland_2020} to prove some properties of Lipschitz-free spaces by considering only the compact case. Let us say informally that a Banach space property (P) is compactly determined in Lipschitz-free spaces if for every metric space $M$, the free space $\Free(M)$ has (P) whenever $\Free(K)$ has (P) for every compact subset $K \subset M$. 
For instance, it is proved in \cite[Corollary~2.6]{ANPP_2020} that the Schur property satisfies such a statement (see also \cite[Corollary~2.9]{ANPP_2020} for $\ell_1$-saturation).

In order to extend the equivalences in Theorem \ref{th:seven_equivalences} to the noncompact case, it would be enough to show that the corresponding properties are compactly determined. Unfortunately, the property ``$\Free(M)$ is (isometrically) a dual space'' is not determined by compact subsets of $M$. There are indeed examples of uniformly discrete metric spaces $M$ for which $\Free(M)$ is not isometric to any dual Banach space (see \cite[Example 5.8]{GPPR_2018} and \cite[Remark~6.4]{BLPP_2019}), although any compact subset $K \subset M$ is finite so that $\Free(K)$ is finite-dimensional. However, we will prove in this section that the RNP is compactly determined in Lipschitz-free spaces, and its equivalence with the Schur property will follow by compact reduction.

\subsection{Compact determination of the RNP}

The Radon-Nikod\'ym property admits many equivalent formulations; see e.g. \cite[Section VII.6]{DiestelUhl} and \cite[Theorem~2.9]{Pisier}. Our arguments in this section will be based on its characterization in terms of martingales in Bochner spaces. Let us recall the notions involved in it.

Let $(\Omega,\A,\PP)$ be a probability space and let $X$ be a Banach space. We denote by $L_1(\Omega,\A,\PP; X)$ the space of (equivalence classes of) Bochner measurable functions $f : \Omega \to X$ such that $\int_{\Omega} \|f\|_X \, d\PP < \infty$ equipped as usual with the norm
$$ \|f\|_{L_1(\Omega,\A,\PP; X)} = \E(\|f\|_X) = \int_{\Omega} \|f\|_X \, d\PP.$$
In the sequel, we will suppress notation and write simply $L_1(X)$, or $L_1(\A;X)$ when the $\sigma$-algebra $\A$ needs to be emphasized.

We recall that a sequence $(M_n)_{n=0}^\infty$ in $L_1(\A; X)$ is called a \textit{martingale} if there exists an increasing sequence $(\A_n)_{n=0}^\infty$ of $\sigma$-subalgebras of $\A$ (called a \textit{filtration}) such that for each $n \geq 0$, $M_n$ is $\A_n$-measurable and satisfies
$$ M_n = \E^{\A_n}(M_{n+1}), $$
where $\E^{\A_n}$ denotes the $X$-valued conditional expectation (see e.g. \cite[Section~1.2]{Pisier}).
We say moreover that $(M_n)_{n=0}^\infty$ is \textit{uniformly integrable} if the sequence of non-negative random variables $(\|M_n\|_X)_{n=0}^\infty$ is uniformly integrable. More precisely, this means that $(\|M_n\|_X)_{n=0}^\infty$ is bounded in $L_1(\RR)$ and that, for any $\eps>0$, there is a $\delta>0$ such that $$ \forall A \in \A, \quad \PP(A) < \delta \quad\implies\quad \sup_{n \geq 0} \int_A \|M_n\|_X \, d\PP < \eps.$$

Whenever $T: X \to Y$ is a bounded linear operator, we get a well-defined bounded linear operator $\widetilde{T}: L_1(X) \to L_1(Y)$ defined by $\widetilde{T}(f) = T \circ f$ with $\|\widetilde{T}\| = \|T\|$. An elementary but important fact is that $\widetilde{T}$ commutes with any conditional expectation operator $\mathbb{E}^{\A_n}$. In the sequel, we will abuse notation and denote $\widetilde{T}$ by $T$.

A Banach space $X$ has the RNP if and only if every uniformly integrable $X$-valued martingale converges in $L_1(X)$ (see \cite[Theorem~2.9]{Pisier}). We will use this characterization in order to prove that the RNP is compactly determined in Lipschitz-free spaces. Our way to the proof will be similar to the method used in \cite{ANPP_2020}.

For the remainder of this section, fix a complete metric space $(M,d)$ and a probability space $(\Omega,\A,\PP)$.

\begin{definition} \label{def:KPtight}
Inspired by the terminology from \cite{ANPP_2020}, we say that a collection $W\subset L_1(\lipfree{M})$ of random variables $F:(\Omega,\A,\PP) \to \lipfree{M}$
has the \textit{mean Kalton property} if, for every $\eps,r > 0$, there exists a finite set $E \subset M$ such that
$$
d(F,L_1(\lipfree{[E]_r})) \leq \eps \quad\text{for all }F\in W,
$$
and we say that $W$ is \textit{mean-tight} if, for every $\eps>0$, there exist a compact $K \subset M$ such that 
$$
d(F,L_1(\lipfree{K})) \leq \eps \quad\text{for all }F\in W.
$$
\end{definition}
It is clear that if $W$ is mean-tight then it has the mean Kalton property. We will now show that the converse is actually also true. 

\begin{proposition} \label{thm:KPtight}
Let $W\subset L_1(\lipfree{M})$ be a collection of random variables with the mean Kalton property. Then $W$ is mean-tight. More precisely, for every $\eps>0$ there exist a compact $K \subset M$ and a mapping $T : W \to L_1(\lipfree{K})$ such that
\begin{itemize}
\item $\E(\norm{F-T(F)}) \leq \eps$ for every $F\in W$, and
\item there is a sequence of bounded linear operators $T_n: \lipfree{M} \to \lipfree{M}$, $n\in\NN$ such that
$$
\lim_{n\to\infty}\sup_{F\in W} \E(\norm{T_n(F)-T(F)})=0 .
$$
\end{itemize}
\end{proposition}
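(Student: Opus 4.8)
The plan is to mimic, in the Bochner-valued (``mean'') setting, the argument of \cite[Section~2]{ANPP_2020} showing that the Kalton property implies tightness in $\lipfree{M}$, while additionally keeping everything linear so as to produce the operators $T_n$. Fix $\eps>0$. First I would extract scales: choose a sequence $\eps_k\searrow 0$ with $\sum_k\eps_k<\eps$ and, recursively, radii $r_1>r_2>\dots\searrow 0$ (each $r_{k+1}$ taken extremely small relative to $r_k$ and to the data produced so far), and apply the mean Kalton property with parameters $\eps_k,r_k$ to obtain finite sets $E_k\subset M$ with $d(F,L_1(\lipfree{[E_k]_{r_k}}))\le\eps_k$ for every $F\in W$; after replacing each $E_k$ by a maximal $r_k$-separated subset (and absorbing an absolute constant into $r_k$) we may assume $E_k$ is $r_k$-separated. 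I would then set $K:=\bigcap_k[E_k]_{2r_k}$: this set is closed, hence complete since $M$ is, and for any $\delta>0$ choosing $k$ with $2r_k<\delta$ shows $K\subset[E_k]_\delta$, so $K$ is covered by the $\abs{E_k}<\infty$ balls of radius $\delta$ centred at $E_k$; thus $K$ is totally bounded and therefore compact. (Replacing $W$ by the set of its ``localizations'', one may also assume $W$ is bounded in $L_1$, which is automatic in the martingale application later on.)

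The heart of the argument is then to build the operators $T_n$. Following Kalton's localization construction (as used in \cite{ANPP_2020,Kalton_2004}), I would attach to each $r_k$-separated net $E_k$ a Lipschitz partition of unity at scale $r_k$ and let $R_k\colon\lipfree{M}\to\lipfree{M}$ be the pre-adjoint of the resulting weak$^*$-continuous averaging operator on $\Lip_0(M)$. Such an $R_k$ is bounded and linear (its norm may be large and depend on $E_k$, which turns out to be harmless), fixes $\delta(0)$, has range inside $\lipfree{[E_k]_{r_k}}$, displaces supports by at most $r_k$, and satisfies $\norm{R_k\mol{xy}-\mol{xy}}\le 2r_k/d(x,y)$ for all $x\neq y$. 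I would put $T_n:=R_n\circ\cdots\circ R_1$, a bounded linear operator on $\lipfree{M}$ whose canonical lift to $L_1(\lipfree{M})$ (again written $T_n$, following the preliminaries) is bounded and commutes with every conditional expectation. Since $R_j$ expands supports by at most $r_j$ and $\sum_{j>m}r_j\le r_m$, the range of $T_n$ lies in $\lipfree{[E_m]_{2r_m}}$ for all $n>m$; as these subspaces are norm-closed and $\bigcap_m\lipfree{[E_m]_{2r_m}}=\lipfree{K}$ (each element of $\lipfree{M}$ having a well-defined closed support), any norm-limit $\lim_nT_n(F)$ lies in $L_1(\lipfree{K})$.

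To finish I would show that $(T_n(F))_n$ is uniformly Cauchy in $L_1$ over $W$, and that its limit is close to $F$. The estimate on consecutive terms exploits that, after one application of $R_{k-1}$, a molecule has length comparable to $r_{k-1}$ (by $r_{k-1}$-separation of $E_{k-1}$), together with $\norm{R_k\mol{xy}-\mol{xy}}\le 2r_k/d(x,y)$ and the smallness of $r_k$ relative to $r_{k-1}$ fixed in the first step; combined with $d(F,L_1(\lipfree{[E_k]_{r_k}}))\le\eps_k$ this should give $\sup_{F\in W}\E\norm{T_n(F)-T_{n-1}(F)}\le C\eps_{n-1}$. Hence $T(F):=\lim_nT_n(F)\in L_1(\lipfree{K})$ exists, $\sup_{F\in W}\E\norm{T_n(F)-T(F)}\to0$, and summing the telescoping errors with the initial approximation yields $\E\norm{F-T(F)}\le C'\sum_k\eps_k$, which is $<\eps$ (or exactly $\eps$ after a harmless rescaling of the $\eps_k$ at the outset).

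The main obstacle I expect is precisely the construction and error analysis of the operators $R_k$ in the absence of a doubling assumption on $M$: without doubling the partitions of unity attached to the nets $E_k$ have unbounded overlap, so $\norm{R_k}$ is uncontrolled and one cannot telescope naively. The resolution is that the mean Kalton property has already confined the relevant part of each $F\in W$ near the finite set $E_k$, and the recursive choice of radii must be delicate enough that the (uncontrolled) size of $R_k$ is beaten by the (controlled) smallness of $r_k$ at the next stage; arranging simultaneously that the ranges of the $T_n$ shrink into $\lipfree{K}$ and that each $T_n$ remains a genuine weak$^*$-continuous operator — so that it passes to the Bochner space and commutes with conditional expectations, which is exactly what the later martingale argument in the proof of Theorem~\ref{ThmC} requires — is the bulk of the technical bookkeeping. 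Everything else (compactness of $K$, lifting operators to $L_1(\lipfree{M})$) is routine.
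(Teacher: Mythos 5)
Your overall architecture matches the paper's: iterate the mean Kalton property along a sequence of scales, take $K$ to be the intersection of the resulting neighborhoods of finite sets (your compactness argument for $K$ is correct), define $T_n$ as a composition of localization operators whose ranges shrink into $\lipfree{K}$, and obtain $T$ as the $L_1$-limit. The genuine gap is in the localization operators. You take $R_k$ to be a Kalton-type averaging operator built from a Lipschitz partition of unity subordinate to $E_k$ at scale $r_k$, with range in $\lipfree{[E_k]_{r_k}}$ and the global bound $\norm{R_k\mol{xy}-\mol{xy}}\le 2r_k/d(x,y)$. No such operator exists in this setting: $E_k$ is merely a finite set near which the elements of $W$ approximately live, not an $r_k$-net of $M$, so a point $x$ at distance $D\gg r_k$ from $E_k$ must be sent by $R_k$ into $\lipfree{[E_k]_{r_k}}$, forcing $\norm{R_k\delta(x)-\delta(x)}\ge D-r_k$ and destroying your displacement estimate. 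Relatedly, your telescoping bound $\E\norm{T_nF-T_{n-1}F}\lesssim\eps_{n-1}$ needs a mechanism to pass from ``$F$ is close to $L_1(\lipfree{[E_n]_{r_n}})$'' to ``$T_{n-1}F$ is close to something (almost) fixed by $R_n$''; averaging operators neither commute with one another nor restrict to the identity on $\lipfree{[E_n]_{r_n}}$, so the approximant $G$ of $F$ gives no control on $R_nT_{n-1}G-T_{n-1}G$. There is also a circularity in the quantifiers: $\norm{R_k}$ depends on the geometry of $E_k$, but $E_k$ is produced by the mean Kalton property only after the approximation accuracy is specified, so that accuracy cannot be chosen to beat $\norm{R_k}$.

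The paper resolves all three issues simultaneously by using multiplication (weighting) operators instead of averaging operators: $\duality{S_n\mu,f}=\duality{\mu,f\cdot h_n}$, where $h_n$ equals $1$ on $[E_n]_{\delta_n}$, vanishes off $[E_n]_{2\delta_n}$, and is $\delta_n^{-1}$-Lipschitz. Such an $S_n$ kills (rather than transports) the mass far from $E_n$, restricts to the identity on $\lipfree{[E_n]_{\delta_n}}$, commutes with every other such operator, and satisfies $\norm{S_n}\le 1+\diam(M)/\delta_n$ --- a bound depending only on the scale $\delta_n$, which is fixed before $E_n$ is chosen. Commutativity plus the identity property give $T_nG=T_{n-1}S_nG=T_{n-1}G$ for $G$ valued in $\lipfree{[E_n]_{\delta_n}}$, which is precisely the telescoping mechanism your sketch is missing. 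If you replace your $R_k$ by these weighting operators (and add the preliminary truncation the paper performs when $M$ is unbounded), the rest of your outline goes through.
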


\begin{proof}
We follow the proof of \cite[Theorem~3.2]{ANPP_2020} closely. Suppose first that $M$ is bounded, let $R=\diam(M)$ and fix $\eps\in (0,\frac{1}{3})$. Set $\eps_0=\eps$ and $\delta_0=R$, and for $n\geq 1$ let $\eps_n=2^{-n}\eps$ and $\delta_n=R\cdot (\frac{1}{\eps_n}-2)^{-1}$. Let also $K_0=M$ and $S_0$ be the identity operator on $\lipfree{M}$. We will now construct, for $n\geq 1$, finite subsets $E_n\subset M$ containing $0$, closed subsets $K_n\subset M$ also containing $0$, and operators $S_n:\lipfree{M}\rightarrow\lipfree{M}$ such that
\begin{enumerate}[label={(\alph*)}]
\item $K_n = K_{n-1} \cap [E_n]_{2\delta_n}$,
\item the operators $S_n$ commute with each other,
\item $T_n(\lipfree{M})\subset\lipfree{K_n}$, and
\item $\E(\norm{T_{n-1}(F)-T_n(F)})\leq\eps_n$ for all $F\in W$,
\end{enumerate}
where $T_n=S_n\circ S_{n-1}\circ\ldots\circ S_0$.

We proceed by induction. Let $n\geq 1$ and suppose that $K_{n-1}$ and $S_{n-1}$ have already been constructed. By the mean Kalton property, we may find a finite set $E_n\subset M$ such that $0\in E_n$ and
\begin{equation}
\label{eq:TkMn_has_KP}
\norm{T_{n-1}}\cdot \sup_{F\in W}d(F,L_1(\lipfree{[E_n]_{\delta_n}})) < \eps_n^2.
\end{equation}
Let $K_n$ be as in (a). Now consider the function $h_n:M\to\RR$ given by
$$
h_n(x) = 0 \vee (1-\delta_n^{-1}\cdot d(x,[E_n]_{\delta_n}))
$$
for $x\in M$, and define the operator $S_n$ by
$$
\duality{S_n\mu,f} = \duality{\mu,f\cdot h_n}
$$
for $\mu\in\lipfree{M}$ and $f\in\Lip_0(M)$. Clearly $0\leq h_n\leq 1$, $\lipnorm{h_n}\leq\delta_n^{-1}$ and $\supp(h_n)\subset [E_n]_{2\delta_n}$, so by \cite[Proposition 2.4]{APPP_2020} we have $\norm{S_n} \leq 1+R/\delta_n = \eps_n^{-1}-1$. It is also clear that the restriction of $S_n$ to $\lipfree{[E_n]_{\delta_n}}$ is the identity, and that (b) holds for all $S_n$ defined in this way. Moreover, by formula (3) in \cite{APPP_2020} and inductive hypothesis (c) we have
$$
T_n(\lipfree{M}) = S_n(T_{n-1}(\lipfree{M})) \subset S_n(\lipfree{K_{n-1}}) \subset \lipfree{K_{n-1}\cap [E_n]_{2\delta_n}} = \lipfree{K_n}
$$
so (c) holds.
Finally let us check (d). Fix $F\in W$, then by \eqref{eq:TkMn_has_KP} there is $G\in L_1(\lipfree{[E_n]_{\delta_n}})$ such that $\E(\norm{T_{n-1}(F)-T_{n-1}(G)})<\eps_n^2$, and we have $T_n(G)=T_{n-1}(S_n(G))=T_{n-1}(G)$ by (b). So
\begin{align*}
\E\pare{\norm{T_{n-1}(F)-T_n(F)}} &\leq \E\pare{\norm{T_{n-1}(F)-T_{n-1}(G)}} + \E\pare{\norm{T_n(G)-T_n(F)}} \\
&\leq (1+\norm{S_n})\E\pare{\norm{T_{n-1}(F)-T_{n-1}(G)}} \leq \eps_n .
\end{align*}
The construction is thus complete.

Now let $K=\bigcap_{n=1}^\infty K_n$. Since $\delta_n\rightarrow 0$, (a) implies that $K$ is totally bounded. Then, since $K$ is closed and $M$ is complete, $K$ is compact. 
For every $F\in W$, it follows from (d) that the sequence $(T_n(F))$ is Cauchy so it converges to some element of $L_1(\lipfree{M})$. Denote the limit by $T(F)$. This gives us a map
$$
T: W \longrightarrow \bigcap_{n=1}^\infty L_1(\lipfree{K_n}) = L_1(\lipfree{K}) ,
$$
where the equality holds by Pettis' measurability theorem (see~\cite[p. 42]{DiestelUhl}) and the fact that $\bigcap_n\lipfree{K_n}=\lipfree{K}$ (see \cite[Theorem 2.1]{APPP_2020}).
Finally, notice that
$$
\E\pare{\norm{F-T_n(F)}} \leq \sum_{i=1}^n \E\pare{\norm{T_{i-1}(F)-T_i(F)}} < \eps
$$
by (d), and therefore $\E\pare{\norm{F-T(F)}}\leq\eps$. Similarly $\E\pare{\norm{T_n(F)-T(F)}}\leq\eps_n$ for every $n$. This completes the proof of the bounded case.

If $M$ is unbounded, then we simply replace the initial construction step as follows. By assumption we can find a finite set $E_0\subset M$ containing $0$ such that
$$d(F,L_1(\lipfree{[E_0]_1}))<\frac{\varepsilon}{4}$$
for every $F\in W$. Set $R=2(\diam(E_0)+1)$ and $K_0=B_R(0)$, consider the function $h_0:M\to\RR$ given by
$$
h_0(x)= 0 \vee (1 \wedge (2-2R^{-1}d(x,0)))
$$
for $x\in M$, and let $S_0:\lipfree{M}\to\lipfree{K_0}$ be defined by $\duality{S_0\mu,f}=\duality{\mu,f\cdot h_0}$ for $f\in\Lip_0(M)$. Then, similarly as above, we have $\lipnorm{h_0}\leq 2R^{-1}$, $\norm{S_0}\leq 3$, and thus
$$
\E\pare{\norm{F-S_0(F)}} \leq (1+\norm{S_0})\frac{\varepsilon}{4} \leq \varepsilon
$$
for every $F\in W$. We can then continue the inductive construction as above.
\end{proof}

Next, we obtain a probabilistic version of Kalton's lemma \cite[Lemma 4.5]{Kalton_2004} for martingales in place of weakly null sequences. 
This is the main new technical tool needed to prove compact determination of the RNP.

\begin{proposition} \label{thm:KP}
Suppose that $M$ is bounded. Then every $L_1(\lipfree{M})$-bounded martingale has the mean Kalton property.
\end{proposition}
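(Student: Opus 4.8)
The plan is to follow the strategy of Kalton's original lemma \cite[Lemma 4.5]{Kalton_2004} and its adaptation in \cite{ANPP_2020}, but upgrading the weakly-null-sequence argument to a martingale argument. Fix a probability space and a filtration $(\A_n)$, let $(M_n)$ be a martingale in $L_1(\lipfree{M})$ with $\sup_n \E(\norm{M_n}) =: C < \infty$, and fix $\eps, r>0$; the goal is to produce a finite set $E\subset M$ so that $d(M_n, L_1(\lipfree{[E]_r})) \leq \eps$ simultaneously for all $n$. The first step is to reduce to finitely supported situations: since $M$ is bounded, pick a finite $r/2$-net $N = \{x_1,\dots,x_k\}$ of $M$ and consider the partition of $M$ into the ``cells'' $A_j$ consisting of points closest to $x_j$. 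The natural candidate for $E$ will be a suitable subset of $N$, namely those $x_j$ whose cell $A_j$ carries a non-negligible amount of the ``mass'' of the martingale in an appropriate averaged sense.

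The key quantitative device is a decomposition of each $\delta(x)\in\lipfree{M}$ (hence of any finitely supported measure, hence by density of any element of $\lipfree{M}$) relative to the net $N$: writing, for $x\in A_j$, the molecule identity $\delta(x) = \delta(x_j) + d(x,x_j)\,\mol{x x_j}$, one controls the ``error'' term by its small norm $d(x,x_j)\le r/2$. Applying this cell-by-cell to a measure $\mu$ supported near $M$, one obtains $\mu = \sum_j \mu_j + (\text{error})$ where $\mu_j \in \lipfree{\cl{A_j}}$ and the error has controlled norm, but this alone is not enough — the point is that one must \emph{discard} the cells that contribute little. So for a random variable $F\in L_1(\lipfree M)$ I would define, for each cell $A_j$, a ``weight'' $w_j(F) = \E(\norm{P_j F})$ where $P_j$ is (a version of) the natural norm-one projection-like map of $\lipfree M$ onto $\lipfree{\cl{A_j} \cup \{x_j\}}$ obtained from a Lipschitz bump supported near $A_j$ (as in \cite{APPP_2020}); summability $\sum_j w_j(F) \lesssim \E(\norm F) + (\text{geometry of }N)$ then lets one throw away the cells of total weight $<\eps'$ and keep $E := \{x_j : A_j \text{ is a ``heavy'' cell for some }M_n\}$. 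The crux is to make this choice of heavy cells \emph{uniform in $n$}, and this is exactly where the martingale structure is used rather than merely boundedness: one runs the selection along the filtration, using that $M_n = \E^{\A_n}(M_{n+1})$ to ensure that a cell which is light for $M_{n+1}$ cannot suddenly become heavy for $M_n$ (conditional expectation contracts the $L_1(\lipfree M)$-norm), so finitely many heavy cells suffice for the whole martingale once $\eps'$ is chosen $\sim \eps / C$.

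The main obstacle I expect is precisely this uniformity-over-$n$ step: a priori the collection of heavy cells could grow without bound as $n\to\infty$, and one needs a stopping-time / telescoping argument to cap it. Concretely I would fix a threshold, and for each $n$ let $J_n$ be the (finite) set of cells that are heavy for $M_n$; the task is to show $\sup_n \abs{J_n}$ is bounded in terms of $C$ and $\eps$. The handle is that $\sum_{j} \E(\norm{P_j M_n})$ is essentially non-increasing in the relevant sense when passing from $M_{n+1}$ to $M_n$ via conditional expectation composed with the (fixed, $n$-independent) maps $P_j$ — here one uses that $P_j$, being a bounded linear operator on $\lipfree M$, commutes with $\E^{\A_n}$ in the sense discussed before the statement, so $\E(\norm{P_j M_n}) = \E(\norm{\E^{\A_n} P_j M_{n+1}}) \le \E(\norm{P_j M_{n+1}})$. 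Hence the total weight $\sum_j \E(\norm{P_j M_n})$ is nondecreasing in $n$ and bounded by $\approx C$ (up to the net-geometry constant), so the number of cells of weight $\ge \eps'$ is bounded by $\approx C/\eps'$ uniformly, giving a single finite $E$. Once $E$ is fixed, for each $n$ one approximates $M_n$ pointwise (in $\omega$) by discarding the light-cell parts and collapsing each heavy cell's contribution into $\lipfree{[E]_r}$, and integrates the resulting pointwise norm bound to get $\E(\norm{M_n - (\text{projection onto }L_1(\lipfree{[E]_r}))}) \le \eps$; the measurability of the approximant is handled by Pettis' theorem exactly as in Proposition \ref{thm:KPtight}. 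Assembling these pieces yields the mean Kalton property for $(M_n)$.
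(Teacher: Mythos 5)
Your opening move is where the argument breaks down: a \emph{bounded} metric space need not be totally bounded, so the finite $r/2$-net $N=\{x_1,\dots,x_k\}$ of $M$ that your whole construction rests on generally does not exist. Worse, if such a net did exist then $[N]_r=M$, so $L_1(\lipfree{[N]_r})=L_1(\lipfree{M})$ and \emph{every} collection $W$ would satisfy the mean Kalton property with $E=N$ and error $0$ --- i.e.\ the proposition is only non-trivial precisely when $M$ has no finite $r$-net (think of an infinite uniformly discrete bounded space). Your proposal therefore only addresses the vacuous case. If one tries to repair this by passing to a countable net (legitimate after reducing to the essentially separable range of the martingale), the second load-bearing claim becomes the problem: the summability $\sum_j \E(\norm{P_jF})\lesssim \E(\norm{F})$ is asserted but not proved, and over infinitely many cells it is false for general $F\in L_1(\lipfree{M})$ and is, in essence, the very statement one is trying to establish. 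The subsequent monotonicity observation ($\E(\norm{P_jM_n})\le\E(\norm{P_jM_{n+1}})$ via commutation of $P_j$ with $\E^{\A_n}$) is fine, but it only converts a uniform bound on the total weight into finiteness of the heavy-cell set; without that uniform bound it yields nothing.

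The paper's proof avoids both issues by arguing by contradiction with a gliding-hump construction on the dual side. Assuming the mean Kalton property fails, one first replaces the martingale by a finitely-supported quasi-martingale adapted to finite $\sigma$-algebras, then extracts indices $N_1<N_2<\dots$ and finite sets $E_0\subset E_1\subset\dots$ with $d(M'_{i},L_1(\A_{N_{i}};\lipfree{[E_{i-1}]_r}))>\eps$, and by Hahn--Banach and McShane--Whitney produces norming functions $f_i(\omega)$ supported in $[E_i]_r\setminus[E_{i-1}]_r$. The crucial estimate your approach lacks is supplied by the lattice identity $\sum_{n\le i}f_n=\bigvee_n f_n^+-\bigvee_n f_n^-$, which bounds $\lipnorm{\sum_{n\le i}f_n}$ by $2C$ \emph{independently of} $i$ despite the individual norms not being summable; pairing with $M'_i$ and using $\E^{\A_{N_n}}(M'_i)\approx M'_n$ then gives $2C\norm{M'_i}_{L_1}\gtrsim i\eps$, contradicting boundedness. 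I would encourage you to rework the argument along these contradiction/disjoint-supports lines rather than the primal cell-decomposition.
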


\begin{proof}
Assume towards a contradiction that there exist $r>0$, $\eps>0$, and an $L_1(\lipfree{M})$-bounded martingale $(\tilde{M}_n)_{n=0}^\infty$ adapted to a filtration $(\tilde{\A}_n)_{n=0}^\infty$ such that for every finite subset $E \subset M$, there exists $n \in \NN$ with 
$$d(\tilde{M}_n,L_{1}(\lipfree{[E]_r})) = d(\tilde{M}_n,L_{1}(\tilde{\A}_n;\lipfree{[E]_r})) > 4\eps.$$
First we use a standard approximation technique to replace $(\tilde{M}_n)_{n=0}^\infty$ with a martingale $(M_n)_{n=0}^\infty$ adapted to a filtration $(\A_n)_{n=0}^\infty$ where each $\A_n$ is finite. Here is the technique: For each $n \geq 0$, choose an $\tilde{\A}_n$-measurable simple function $G_n$ such that $\|\tilde{M}_n-G_n\|_{L_1} < \eps$.
Let $\A_n$ be the (finite) $\sigma$-algebra generated by $G_0, G_1, \dots, G_n$. 
Then, using $\A_n \subset \tilde{\A}_n,\A_{n+1}$ and the tower property of conditional expectations, we can see that $(M_n)_{n=0}^\infty := (\E^{\A_n}(\tilde{M}_n))_{n=0}^\infty$ is an $L_1(\lipfree{M})$-bounded martingale adapted to $(\A_n)_{n=0}^\infty$. 
Now observe that $\|\tilde{M}_n-M_n\|_{L_1} < 2\eps$, which implies, for every finite $E \subset M$, there exists $n \in \NN$ with $$d(M_n,L_{1}(\lipfree{[E]_r})) = d(M_n,L_{1}(\A_n;\lipfree{[E]_r})) > 2\eps.$$

By using density of the finitely supported elements in $\lipfree{M}$, we may replace $(M_n)_n$ with a stochastic process $(\cl{M}_n)_n$ adapted to $(\A_n)_{n=0}^\infty$ satisfying
\begin{itemize}[itemsep=3pt]
\item $\cl{M}_n$ is constant on each atom of $\A_n$ and not just essentially constant,
\item $\cl{M}_n(\omega)$ is finitely supported for every $n \in \NN$ and $\omega \in \Omega$,
\item $(\cl{M}_n)_n$ is $L_1(\lipfree{M})$-bounded,
\item for every finite $E \subset M$, there exists $n \in \NN$ with 
$$d(\cl{M}_n,L_1(\A_n;\lipfree{[E]_r})) > \eps,$$ 
\item for every $n \leq i$, $\|\E^{\A_n}(\cl{M}_{i})-\cl{M}_n\|_{L_1} \leq 2^{-n} \wedge \eps$.
\end{itemize}
Notice that the last condition implies that $(\cl{M_n})_{n=0}^\infty$ is a quasi-martingale; see \cite[Remark~2.16]{Pisier} for more details.
\smallskip

Set $N_0 := 0$ and $E_0 := \bigcup_{n \leq N_0} \bigcup_{\omega \in \Omega} \supp(\cl{M}_n(\omega))$. It is clear that $E_0$ is finite. Then there exists $N_1 > N_0$ such that 
$d(\cl{M}_{N_1},L_1(\A_{N_1};\lipfree{[E_0]_r})) > \eps $.
We let $E_1 := \bigcup_{n \leq N_1} \bigcup_{\omega \in \Omega} \supp(\cl{M}_n(\omega))$, which is finite as well. Then there exists $N_2 > N_1$ such that $d(\cl{M}_{N_2},L_1(\A_{N_2};\lipfree{[E_1]_r})) > \eps$. Continuing in this way, we will get an increasing sequence of integers $(N_i)_{i=1}^{\infty} \subset \NN$ and a sequence  of finite sets  $E_0 \subset E_1 \subset E_2 \subset \dots \subset  M$ such that, setting $M'_i := \cl{M}_{N_i}$,
\begin{itemize}[itemsep=3pt]
\item $\supp(M'_i(\omega)) \subset E_i$ for every $\omega \in \Omega$,
\item $(M'_i)_i$ is $L_1(\lipfree{M})$-bounded,
\item $d(M'_{i+1},L_1(\A_{N_{i+1}};\lipfree{[E_i]_r})) > \eps$,
\item for all $n \leq i$, $\|\E^{\A_{N_n}}(M'_{i})-M'_n\|_{L_1} \leq 2^{-N_n} \leq 2^{-n}$. 
\end{itemize}
By the Hahn--Banach theorem, for each $i \geq 1$ there is $\tilde{f}_{i} \in L_{\infty}(\A_{N_{i}};\Lip_0(M)) = L_1(\A_{N_{i}};\lipfree{M})^\ast$ such that
\begin{itemize}[itemsep=3pt]
\item $\|\tilde{f}_{i}\|_{L_{\infty}(\Lip_0(M))} \leq 1$,
\item $\tilde{f}_{i}(\omega)$ vanishes on $[E_{i-1}]_r$ for every $\omega \in \Omega$,
\item $\E\left(\duality{\tilde{f}_i,M'_i}\right) > \eps$. 
\end{itemize}
For each $\omega \in \Omega$, using the McShane--Whitney extension theorem (see \cite[Theorem~1.33]{Weaver2} for instance) we obtain a Lipschitz map $f_{i}(\omega) : M \to \RR$ so that
\begin{itemize}[itemsep=3pt]
\item $f_{i}(\omega)$ vanishes on $[E_{i-1}]_r$,
\item $f_{i}(\omega)$ agrees with $\tilde{f}_{i}(\omega)$ on $E_{i}$,
\item $\supp(f_{i}(\omega)) \subset [E_{i}]_r$,
\item $\lipnorm{f_{i}(\omega)} \leq C \lipnorm{\tilde{f}_{i}(\omega)} \leq C$, 
\end{itemize}
where $C= 1 \vee (\rad(M)/r)$.
Moreover, since $\A_{N_i}$ is finite, the above can be done so that $f_i$ is $\A_{N_i}$-measurable.
Notice that the functions $(f_i(\omega))_i$ have pairwise disjoint supports, therefore
$$
\lipnorm{\sum_{n=1}^i f_n(\omega)} = \lipnorm{\sum_{n=1}^i f_n^+(\omega) - \sum_{n=1}^i f_n^-(\omega)} = \lipnorm{\bigvee_{n=1}^i f_n^+(\omega) - \bigvee_{n=1}^i f_n^-(\omega)} \leq 2C \,.
$$
Here we denote $f_n^+(\omega) = 0 \vee f_n(\omega)$ and $f_n^-(\omega) = 0 \vee (-f_n(\omega))$ pointwise. 
Thus, we have for every $i\geq 1$
\begin{align*}
2 C \|M'_{i}\|_{L_1} &\geq \E \Big(  \duality{\sum_{n=1}^i f_n,M'_i} \Big) \\
&=  \E \Big(  \sum_{n=1}^i  \duality{f_n,\E^{\A_{N_{n}}}(M'_{i})} \Big) \\
&\geq \E \Big(  \sum_{n=1}^i \duality{f_n,M'_n}\Big)
- C \sum_{n=1}^i \|\E^{\A_{N_{n}}}(M'_{i}) - M_{n}'\|_{L_1}  \\
&\geq \E \Big(  \sum_{n=1}^i \duality{f_n,M'_n} \Big) - C \\
&> i\cdot\eps - C.
\end{align*}
Since $i \in \NN$ was arbitrary, $C<\infty$, and $\eps>0$, this contradicts our assumption that $\sup_i \|M'_i\|_{L_1} < \infty$.
\end{proof}

\begin{remark}
\label{rm:KP_unbounded}
Proposition~\ref{thm:KP} remains true for unbounded metric spaces $M$. This can be proved using results in the recent paper \cite{AACD_20} where a bounded metric space $\mathcal{B}$ and a linear isomorphism $P : \lipfree{M} \to \lipfree{\mathcal{B}}$ are constructed. It is clear (using \cite[Remark on p.~5]{Pisier}) that if $(M_n)_{n=0}^\infty\subset L_1(\lipfree{M})$ is a bounded martingale adapted to $(\A_n)_{n=0}^\infty$ then so is $(PM_n)_{n=0}^\infty \subset L_1(\lipfree{\mathcal{B}})$, therefore $(PM_n)_{n=0}^\infty$ is mean-tight by the bounded versions of Propositions~\ref{thm:KPtight} and \ref{thm:KP}.  Finally, we claim that if $(M_n)_{n=0}^\infty \subset L_1(\lipfree{\mathcal{B}})$ is mean-tight then $(P^{-1}M_n)_{n=0}^\infty \subset L_1(\lipfree{M})$ is also mean-tight, which implies the mean Kalton property.
This observation requires using the fact that $P^{-1}$ respects supports and compactness (see Proposition 4.2(ix) and Lemma 7.3 in \cite{AACD_20}). We do not know whether there is a direct argument for this claim with the mean Kalton property in place of mean-tightness.
\end{remark}

We may now deduce the compact determination of the RNP as a consequence of the two previous results.

\begin{corollary} \label{cor:CR-RNP}
    The Lipschitz-free space $\lipfree{M}$ has the Radon-Nikod\'ym property if and only if $\lipfree{K}$ has the Radon-Nikod\'ym property for every compact $K \subset M$.
\end{corollary}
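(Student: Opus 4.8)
The plan is to prove the two implications separately, the forward one being trivial and the reverse one carrying all the content. If $\lipfree{M}$ has the RNP and $K\subset M$ is compact, then $\lipfree{K}$ is isometric to a closed subspace of $\lipfree{M}$ (see Section~\ref{Prelim:Free}), and the RNP is inherited by closed subspaces, so $\lipfree{K}$ has the RNP. For the converse I would assume $\lipfree{K}$ has the RNP for every compact $K\subset M$ and invoke the martingale characterization recalled above: it suffices to show that every uniformly integrable martingale $(M_n)_{n=0}^{\infty}$ in $L_1(\lipfree{M})$, adapted to some filtration $(\A_n)_{n=0}^{\infty}$, converges in $L_1(\lipfree{M})$, and for this it is enough to show $(M_n)_n$ is Cauchy.

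I would fix $\eps>0$ and run the collection $W:=\{M_n:n\geq 0\}$ through the previous two propositions. Since $(M_n)_n$ is in particular $L_1(\lipfree{M})$-bounded, Proposition~\ref{thm:KP} (together with Remark~\ref{rm:KP_unbounded} when $M$ is unbounded) shows that $W$ has the mean Kalton property, so Proposition~\ref{thm:KPtight} yields a compact $K=K_\eps\subset M$, a map $T:W\to L_1(\lipfree{K})$, and bounded linear operators $T_k:\lipfree{M}\to\lipfree{M}$ such that $\E(\norm{M_n-T(M_n)})\leq\eps$ for all $n$ and $\alpha_k:=\sup_n\E(\norm{T_k(M_n)-T(M_n)})\to 0$ as $k\to\infty$; as in Section~\ref{Prelim:Free}, each $T_k$ is identified with its canonical lift to $L_1(\lipfree{M})$, which commutes with every conditional expectation. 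The next step is to verify that $(T(M_n))_n$ is again a martingale adapted to $(\A_n)$, now with values in $L_1(\lipfree{K})$: each $T_k(M_n)$ is $\A_n$-measurable, hence so is the $L_1$-limit $T(M_n)$; and since $\E^{\A_n}$ is an $L_1$-contraction, $\E^{\A_n}(T(M_{n+1}))=\lim_k\E^{\A_n}(T_k(M_{n+1}))=\lim_k T_k(M_n)=T(M_n)$, using the martingale property of $(M_n)_n$ and that $T_k$ commutes with $\E^{\A_n}$.

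The core of the argument is to show that $(T(M_n))_n$ is uniformly integrable. I would obtain this by comparison with the fixed-$k$ families: for each $k$, the family $(T_k(M_n))_n$ is uniformly integrable because $\norm{T_k(M_n)}\leq\norm{T_k}\cdot\norm{M_n}$ pointwise and $(\norm{M_n})_n$ is uniformly integrable by hypothesis; then the estimate $\int_A\norm{T(M_n)}\leq\int_A\norm{T_k(M_n)}+\alpha_k$, valid uniformly in $n$, together with $\alpha_k\to 0$, transfers uniform integrability to $(T(M_n))_n$. Now the hypothesis that $\lipfree{K}$ has the RNP forces $(T(M_n))_n$ to converge in $L_1(\lipfree{K})$, hence to be Cauchy there; and since $\norm{M_n-M_m}_{L_1}\leq 2\eps+\norm{T(M_n)-T(M_m)}_{L_1}$ for all $n,m$, we get $\limsup_{n,m\to\infty}\norm{M_n-M_m}_{L_1}\leq 2\eps$. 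Letting $\eps\to 0$ shows $(M_n)_n$ is Cauchy in $L_1(\lipfree{M})$, hence convergent, so $\lipfree{M}$ has the RNP.

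I expect the uniform integrability of $(T(M_n))_n$ to be the only genuinely delicate point. The map $T$ carries no pointwise norm bound — the operators built inside the proof of Proposition~\ref{thm:KPtight} can have large norm — so one cannot dominate $\norm{T(M_n)}$ by a fixed multiple of $\norm{M_n}$ directly; it is precisely for this reason that Proposition~\ref{thm:KPtight} is stated with the auxiliary operators $T_k$, whose boundedness for each fixed $k$ is what makes the transfer of uniform integrability go through. The verification that $(T(M_n))_n$ is a martingale is routine but should not be omitted, since it is exactly what permits the appeal to the RNP of $\lipfree{K}$.
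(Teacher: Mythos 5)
Your proof is correct and follows essentially the same route as the paper: both rest on the martingale characterization of the RNP together with Propositions~\ref{thm:KP} and~\ref{thm:KPtight}, both verify that $(T(M_n))_n$ is a uniformly integrable martingale by exploiting the boundedness of the approximating operators $T_k$ and the uniform convergence $T_k(M_n)\to T(M_n)$, and both conclude by comparing $\norm{M_n-M_m}_{L_1}$ with $\norm{T(M_n)-T(M_m)}_{L_1}$. The only divergences are minor: the paper argues by contrapositive (producing a single compact $K$ on which the RNP fails) rather than directly with a compact $K_\eps$ for each $\eps$, and it handles unbounded $M$ by first passing to a bounded subset via Kalton's $\ell_1$-decomposition \cite[Proposition 4.3]{Kalton_2004} rather than by invoking Remark~\ref{rm:KP_unbounded}, whose justification the paper only sketches.
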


\begin{proof}
    Since the RNP passes to subspaces, the ``only if" implication is immediate. We will prove the contrapositive of the ``if" direction.
    
    Let us first remark that we only need to consider the case where $M$ is bounded. Indeed, assume $\lipfree{M}$ fails to have the RNP. Thanks to \cite[Proposition 4.3]{Kalton_2004}, $\Free(M)$ is isomorphic to a subspace of $\big(\sum_{k \in \ZZ} \Free(M_k)\big)_{\ell_1}$ for certain bounded subsets $M_k$ of $M$. Since the RNP is stable through $\ell_1$-sums, isomorphisms, and passing to subspaces, $\Free(M_k)$ must fail the RNP for some bounded $M_k \subset M$. 
    
    By \cite[Theorem~2.9]{Pisier}, there exists a uniformly integrable martingale $(M_n)_{n=0}^\infty$ in $L_1(\lipfree{M})$ that does not converge in $L_1(\lipfree{M})$. So there exists some $\delta>0$ such that $\limsup\limits_{n,m \to \infty}\|M_n-M_m\|_{L_1(\lipfree{M})} > \delta$. The martingale $(M_n)_n$ has the mean Kalton property by Proposition~\ref{thm:KP}, so we may apply Proposition~\ref{thm:KPtight} with $\eps = \frac{\delta}{4}$ to get a compact $K\subset M$, operators $T_k:\lipfree{M}\to\lipfree{M}$ and a map $T:(M_n)_n\to L_1(\lipfree{K})$ such that
    \begin{equation}\label{eq:KP_1}
    \sup_{n\in\NN} \norm{M_n-T(M_n)}_{L_1(\lipfree{M})}\leq\frac{\delta}{4}
    \end{equation}
    and 
    \begin{equation}\label{eq:KP_2}
    \lim_{k\to\infty}\sup_{n\in\NN} \norm{T_k(M_n)-T(M_n)}_{L_1(\lipfree{M})}=0 .   
    \end{equation}
	
	Let us see that $(T(M_n))_{n=0}^\infty$ is a martingale. Let $(\A_n)$ be a filtration with respect to which $(M_n)$ is a martingale. Since $T_k$ commutes with $\mathbb{E}^{\A_n}$ for every $k \geq 1$ and $n \geq 0$, and since $T_k(M_n) \to T(M_n)$ for every $n \geq 0$, it follows that \[
	\mathbb{E}^{\A_n}[T(M_{n+1})] = \lim_{k\to\infty} T_k(\mathbb{E}^{\A_n}[M_{n+1}])=\lim_{k\to\infty} T_k(M_n)=T(M_n).
	\]
	
	Now notice that $(T(M_n))$ is $L_1(\lipfree{K})$-bounded by \eqref{eq:KP_1}, since $(M_n)$ is $L_1(\lipfree{M})$-bounded. Moreover, the sequence $(T_k(M_n))_{n=0}^\infty$ is uniformly integrable for each $k$ since $T_k$ is bounded. Condition \eqref{eq:KP_2} then implies that $(T(M_n))$ is an $L_1(\lipfree{K})$-uniformly integrable martingale. But we have
	$$\limsup_{n,m \to \infty} \|T(M_n)-T(M_m)\|_{L_1(\lipfree{K})} > \frac{\delta}{2}$$
	by \eqref{eq:KP_1}, so it cannot converge in $L_1(\lipfree{K})$. According to \cite[Theorem~2.9]{Pisier} again, this shows that $\lipfree{K}$ fails to have the RNP, completing the proof.
\end{proof}

\subsection{The structure of Lipschitz-free spaces over purely 1-unrectifiable metric spaces}
\label{section:theoremC}

In general Banach spaces we have
\begin{center}
Schur property $\Longrightarrow$ non-containment of $L_1$ $\Longleftarrow$ RNP 
\end{center}
but all implications absent in the diagram fail in general (either trivially or by deep examples due to Hagler \cite{Hagler1977studia} and Bourgain and Rosenthal \cite{BR1980Israel} of spaces with the Schur property failing the RNP). The situation is quite different for Lipschitz-free spaces, as the three properties above are in fact equivalent.

\begin{theorem} \label{thm:equivalences}
	Let $M$ be a  metric space. Then the following are equivalent:
	\begin{enumerate}[label={\upshape{(\roman*)}}]
		\item \label{p1u} The completion of $M$ is purely 1-unrectifiable,
		\item \label{rnp} $\lipfree{M}$ has the Radon-Nikod\'ym property,
		\item \label{kmp} $\lipfree{M}$ has the Krein-Milman property,
		\item \label{schur} $\lipfree{M}$ has the Schur property,
		\item \label{no_L1} $\lipfree{M}$ contains no isomorphic copy of $L_1$,
		\item \label{no_L1_ep} There exists $\eps>0$ such that $\lipfree{M}$ contains no $(1+\eps)$-isomorphic copy of $L_1$.
	\end{enumerate}
\end{theorem}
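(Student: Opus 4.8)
The plan is to assemble Theorem~\ref{thm:equivalences} from the compact case (Theorem~\ref{th:seven_equivalences}), the compact determination of the RNP (Corollary~\ref{cor:CR-RNP}), the already-established compact determination of the Schur property from \cite{ANPP_2020}, and Godard's theorem \cite{Godard_2010}, together with the standard chain of implications among these Banach-space properties. Throughout, write $N$ for the completion of $M$; since $\lipfree{M}$ and $\lipfree{N}$ are isometric (finitely supported molecules are dense, and $M$ is dense in $N$), and since each compact subset of $N$ is the closure of its intersection with... wait, not quite --- rather, every compact $K\subset N$ arises as a limit of finite subsets, and more to the point $\lipfree{K}$ embeds isometrically in $\lipfree{N}$, so we may freely pass between $M$ and $N$ and assume $M$ is complete.

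First I would record the easy/classical implications that hold in \emph{every} Banach space: \ref{rnp}$\Rightarrow$\ref{kmp} (the RNP implies the Krein--Milman property, \cite[Theorem~3.3.6]{...} --- or cite the standard source already used, e.g. \cite{DiestelUhl}); \ref{kmp}$\Rightarrow$\ref{no_L1} (since $L_1$ fails the KMP --- its unit ball has no extreme points --- and the KMP passes to closed subspaces); \ref{schur}$\Rightarrow$\ref{no_L1} (the Schur property is hereditary and $L_1$ is not Schur); \ref{rnp}$\Rightarrow$\ref{no_L1} is then subsumed; and trivially \ref{no_L1}$\Rightarrow$\ref{no_L1_ep}. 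So the diagram reduces to closing the loop, for which it suffices to prove \ref{p1u}$\Rightarrow$\ref{rnp}, \ref{p1u}$\Rightarrow$\ref{schur}, and \ref{no_L1_ep}$\Rightarrow$\ref{p1u}.

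For \ref{p1u}$\Rightarrow$\ref{rnp}: if $N$ is purely 1-unrectifiable then every compact $K\subset N$ is purely 1-unrectifiable, hence by Theorem~\ref{th:seven_equivalences} (applied to the compact, a fortiori proper, space $K$) $\lipfree{K}$ has the RNP; by Corollary~\ref{cor:CR-RNP} it follows that $\lipfree{N}\cong\lipfree{M}$ has the RNP. For \ref{p1u}$\Rightarrow$\ref{schur}: identically, each $\lipfree{K}$ has the Schur property by Theorem~\ref{th:seven_equivalences}, and by the compact determination of the Schur property \cite[Corollary~2.6]{ANPP_2020} so does $\lipfree{M}$. For \ref{no_L1_ep}$\Rightarrow$\ref{p1u}: argue contrapositively. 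If $N$ is \emph{not} purely 1-unrectifiable, then by \cite[Theorem~9]{Kirchheim_1994} it contains a curve fragment, i.e. a bi-Lipschitz copy of a compact $A\subset\RR$ with $\lambda(A)>0$; moreover, by rescaling/refining the bi-Lipschitz parametrization on a subset of $A$ one can make the bi-Lipschitz distortion as close to $1$ as desired, so $N$ contains a $(1+\eta)$-bi-Lipschitz copy of such an $A$ for every $\eta>0$. Then $\lipfree{A}$ embeds $(1+\eta)$-isomorphically (in fact isometrically for $\eta=0$ into a $1$-complemented subspace) into $\lipfree{N}$, and since $\lipfree{A}\cong L_1$ by Godard \cite[Corollary~3.4]{Godard_2010} with distortion controlled by the bi-Lipschitz constant, $\lipfree{N}$ contains $(1+\eps)$-isomorphic copies of $L_1$ for every $\eps>0$, contradicting \ref{no_L1_ep}.

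The main obstacle is the quantitative refinement needed for \ref{no_L1_ep}$\Rightarrow$\ref{p1u}: one must verify that a curve fragment can be found with bi-Lipschitz distortion arbitrarily close to $1$ (a Lebesgue-density argument: on a density point of $A$, restricting to a small enough neighborhood makes the parametrization nearly an isometry after rescaling), and that Godard's isomorphism $\lipfree{A}\cong L_1$ has an isomorphism constant that tends to $1$ as the distortion does. An alternative that sidesteps the bi-Lipschitz fine-tuning would be to instead prove directly that $\lipfree{M}$ failing \ref{p1u} forces, via the $L_1$-embedding, a copy of $L_1$ that is asymptotically isometric, which is already what Godard's proof gives when the curve fragment is isometric; this is why I would set $\eta\to 0$ rather than work at a fixed distortion. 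The remaining steps are bookkeeping: checking that pure 1-unrectifiability of $N$ passes to compact subsets (trivial) and that the hypotheses of Theorem~\ref{th:seven_equivalences} and Corollary~\ref{cor:CR-RNP} are met (compact spaces are proper; $M$ complete).
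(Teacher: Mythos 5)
Your proposal is correct and follows essentially the same route as the paper: reduce to complete $M$, get the equivalence of \ref{p1u}, \ref{rnp}, \ref{schur} from Theorem~\ref{th:seven_equivalences} via compact determination (Corollary~\ref{cor:CR-RNP} and \cite[Corollary~2.6]{ANPP_2020}), close the loop through \ref{rnp}$\Rightarrow$\ref{kmp}$\Rightarrow$\ref{no_L1}$\Rightarrow$\ref{no_L1_ep}$\Rightarrow$\ref{p1u}, with the last implication by contrapositive via a $(1+\eps)$-bi-Lipschitz curve fragment and Godard's theorem. The one step you flag as the ``main obstacle'' --- extracting a curve fragment with distortion arbitrarily close to $1$ --- is not a bare Lebesgue-density/rescaling argument (restricting an $L$-bi-Lipschitz map to a smaller set does not improve its distortion) but is exactly \cite[Lemma~4]{Kirchheim_1994} on metric differentiability, which is what the paper cites; with that reference in place the argument is complete.
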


\begin{proof}
By the basic properties of free spaces, the free space over $M$ and the free space over the completion of $M$ are the same, so it suffices to assume $M$ is complete.

Property \ref{p1u} is compactly determined by basic measure theory, and \ref{rnp} and \ref{schur} are compactly determined by virtue of Corollary~\ref{cor:CR-RNP} and \cite[Corollary 2.6]{ANPP_2020}, respectively. Thus they are equivalent by Theorem~\ref{th:seven_equivalences}. Implications \ref{rnp}$\Rightarrow$\ref{kmp}$\Rightarrow$\ref{no_L1} are true in general Banach spaces, as the Krein-Milman property is hereditary and preserved by isomorphisms, and it fails to hold in $L_1$. Implication \ref{no_L1}$\Rightarrow$\ref{no_L1_ep} is trivial, and we will prove \ref{no_L1_ep}$\Rightarrow$\ref{p1u} by contrapositive.

Suppose $M$ is not purely 1-unrectifiable, and let $\gamma: K \to M$ be a curve fragment. By Lemma \ref{lem:Kirchheim}, for every $\eps>0$, there exists a positive measure subset $A \subset K$ and $c \in (0,\infty)$ such that $\gamma \restrict_A$ is a $(1+\eps)$-bi-Lipschitz embedding into $(M,c \cdot d)$, where $d$ is the metric on $M$. Whenever $A \subset \RR$ has positive measure, Godard's theorem \cite[proof of Corollary 3.4]{Godard_2010} implies $\lipfree{A}$ contains an isometric copy of $L_1$. Since a $(1+\eps)$-bi-Lipschitz embedding $A \hookrightarrow (M,c \cdot d)$ lifts to a $(1+\eps)$-linear isomorphic embedding $\lipfree{A} \hookrightarrow \lipfree{M,c \cdot d}$, the latter space contains a $(1+\eps)$-isomorphic copy of $L_1$. It is easy to check that $\lipfree{M,c \cdot d}$ is isometric to $\lipfree{M}$, and thus the conclusion follows.
\end{proof}

Federer~\cite[3.3.19-3.3.21]{Federer_GMT} and the works referenced within contain many examples of purely 1-unrectifiable metric spaces. We single out another particularly important one here: any snowflaked metric space $(M,d^\alpha)$ with $0 < \alpha < 1$ (or more generally, $(M,\omega \circ d)$ where $\omega$ is a nontrivial gauge \cite[p. 180]{Kalton_2004}) has purely 1-unrectifiable completion, and thus Theorem \ref{thm:equivalences} implies that $\Free(M,\omega\circ d)$ has the RNP.  Kalton~\cite[comment after Proposition 6.3]{Kalton_2004} proved this whenever $M$ is a separable dual Banach space. He also proved that $\Free(M,\omega\circ d)$ has the Schur property in general~\cite[Theorem~4.6]{Kalton_2004}, and even though he did not explicitly ask the question for the RNP in the general case, it has been around ever since. Theorem~\ref{thm:equivalences}
also yields that the Banach spaces enjoying the uniform lifting property must have the RNP as they embed into a free space with the Schur property~(see \cite[p. 189]{Kalton_2004}). Finally, we also obtain the following, previously unknown consequence: if $M = M_1 \cup M_2$ and $\lipfree{M_1}$ and $\lipfree{M_2}$ have the RNP (resp. Schur property), then $\lipfree{M}$ has the RNP (resp. Schur property).

\begin{remark}
\label{remark:more_equivalences}
The argument for equivalence of \ref{kmp} in Theorem \ref{thm:equivalences} can be generalized. Indeed, the conditions in the theorem are also equivalent to any other property of $\lipfree{M}$ that lies between non-containment of $L_1$ and either the RNP or the Schur property, such as the following:
\begin{itemize}
\item The \textit{point of continuity property} (PCP). Recall that a Banach space $X$ has the PCP provided every non-empty weakly closed and bounded subset admits a point of continuity of the identity map from the weak to the norm topology (see e.g. \cite[Section 4]{FLP_handbook}).
\item The \textit{uniform Kadec-Klee property} (UKK). Recall that $X$ is UKK if for each $\varepsilon> 0$ there exists $\delta>0$ such that every $\varepsilon$-separated weakly convergent sequence in the closed unit ball of $X$ converges to an element of norm less than $1- \delta$ \cite{Huff_1980}.
\end{itemize}
\end{remark}


\section{A Rectifiable-Connectedness Based Characterization of 1-Critical Sets}
\label{section:Whitney}
The first main result of this section is Theorem \ref{ThmE}, by way of Corollary \ref{cor:Whitney}. We recall that a compact metric space $M$ is called \textit{1-critical} if it supports a non-constant locally flat Lipschitz function, i.e. if $\lip_0(M)\neq \set{0}$. Compact metric spaces $M$ that are disconnected are trivially 1-critical, and because of this, many authors include in the definition that $M$ is connected. However, it is more convenient for us to consider disconnected metric spaces as 1-critical, and we adopt this convention.

\begin{reptheorem}{ThmE}[Corollary \ref{cor:Whitney}]
A compact metric space fails to be 1-critical if and only if it is transfinitely almost-rectifiably-connected.
\end{reptheorem}

Let us give an intuitive explanation of the meaning of transfinite almost-rectifiable-connectedness. In \cite[Chapter 8]{Weaver2} Weaver has defined, for a given metric space $(M,d)$, a pseudometric $d_\LL$ on $M$ by
$$
d_\LL(x,y) := \sup\set{ |f(y)-f(x)| \;:\; f \in \ball{\lip(M)} }
$$
and a metric space $M_{\LL}$ obtained by identifying points $x,y \in M$ with $d_\LL(x,y) = 0$. In actuality, Weaver's definitions of $\lip(M)$ and $(M_{\LL},d_\LL$) differ from the ones we give for general metric spaces. However, they agree when $M$ is compact, and we cite results from \cite[Chapter 8]{Weaver2} only in this case. Clearly, $M$ fails to be 1-critical if and only if $M_\LL$ is a single point. Thus, describing the distance $d_\LL$ in terms of the geometry of $(M,d)$ yields the desired geometric characterization of 1-criticality. We achieve this goal in the next subsection, where we define a transfinite sequence of spaces $M_{ur}^{(\alpha)}$ and show that $M_\LL$ is naturally identified with $M_{ur}^{(\omega_1)}$. The space $M_{ur}^{(0)}$ is simply $M$, and intuitively, each space $M_{ur}^{(\alpha+1)}$ is obtained by collapsing every curve fragment in $M_{ur}^{(\alpha)}$ down to an $\HH^1$-null set. This process must stabilize before the first uncountable ordinal $\omega_1$, and hence $M_{ur}^{(\omega_1)}$ has no curve fragments to collapse, i.e. it is purely 1-unrectifiable. Then we use Theorem \ref{ThmA} to conclude:

\begin{informalthm}[\ref{thm:ML=Mur}]
$M_\LL = M_{ur}^{(\omega_1)}$.
\end{informalthm}

\noindent See Theorem \ref{thm:ML=Mur} for a precise formulation. This theorem shows that $M$ fails to be 1-critical if and only if $M_{ur}^{(\omega_1)}$ is a single point. We are thus lead to the following definition: $M$ is \textit{transfinitely almost-rectifiably-connected} if $M_{ur}^{(\omega_1)}$ is a single point. We choose this terminology because the statement ``$M_{ur}^{(1)}$ is a single point" is almost equivalent to the statement ``$M$ is rectifiably-connected" (but not quite, see Example \ref{ex:topsinecurve}).

It is well-known (at least as early as \cite{Choquet_1944}) and easy to check that $\lambda(f(M)) = 0$ whenever $M$ is $\HH^1$-$\sigma$-finite and $f \in \lip(M)$. Hence if $M$ is also connected, then $f$ is constant and thus $M$ fails to be 1-critical. Our second main result in this section is Theorem \ref{ThmF} (by way of Theorem \ref{thm:bttree}), where we obtain a quantitative converse of this statement for bounded turning trees (see Definition \ref{def:bttree}) as a natural application of Theorem \ref{thm:ML=Mur}.

\begin{reptheorem}{ThmF}[Theorem \ref{thm:bttree}]
Let $(M,d)$ be a 1-bounded turning tree. Then for all $x,y \in M$,
$$d_{\LL}(x,y) = \inf\{\HH^1_\infty(A): [x,y] \setminus A \text{ is } \HH^1\text{-}\sigma\text{-finite}\}.$$
In particular, a bounded turning tree fails to be 1-critical if and only if each of its subarcs is $\HH^1$-$\sigma$-finite.
\end{reptheorem}

It was actually proved by Choquet in \cite{Choquet_1944} that $\lambda(f(M)) = 0$ for $\HH^1$-$\sigma$-finite $M$ under the weaker hypothesis that $f$ satisfies the pointwise flatness condition
$$\lim_{y \to x} \frac{|f(x)-f(y)|}{d(x,y)} = 0$$
for every $x \in M$. Thus, Theorem \ref{ThmF} shows that a bounded turning tree is 1-critical if and only if it supports a nonconstant pointwise flat function, reproving \cite[Theorem~2.2]{CKZ_2008}. Theorem \ref{ThmF} also generalizes a result of Norton \cite[Theorem~3]{Norton_1989} who proved that quasiarcs of Hausdorff dimension strictly larger than 1 are 1-critical.

In the final subsection, we provide examples of spaces $M$ for which the transfinite sequence $M = M_{ur}^{(0)} \to M_{ur}^{(1)} \to M_{ur}^{(2)} \to \dots$ stabilizes after one step and examples for which the sequence does not stabilize after one step. We also introduce curve-flat Lipschitz functions and use them as a tool for proving non-stabilization.

\subsection{\texorpdfstring{$\boldsymbol{M_\LL = M_{ur}^{(\omega_1)}}$}{ML=Mur-omega}}
\label{subsec:ML=Mur}

\begin{definition}
Let $(M,d)$ be a metric space. We define a pseudometric $d_{ur}$ on $M$ by
$$ d_{ur}(x,y) := \inf_K \lambda([\min(K),\max(K)] \setminus K) $$
where the infimum is over all compact $K \subset \RR$ such that there exists a 1-Lipschitz map $\gamma: K \to M$ with $\gamma(\min(K)) = x$ and $\gamma(\max(K)) = y$. After identifying any points $x,y$ with $d_{ur}(x,y) = 0$, we obtain a metric space $(M_{ur},d_{ur})$ and a canonical 1-Lipschitz surjection $q: M \to M_{ur}$. The surjection is 1-Lipschitz because for any $x,y \in M$, we may take $K = \{0,d(x,y)\}$, $\gamma(0) = x$, and $\gamma(d(x,y)) = y$, and thus $d(x,y)$ belongs to the set whose infimum equals $d_{ur}(x,y)$. When we wish to emphasize the domain $M$ of the map $q$, we will write $q^M$. Whenever $N$ is a second metric space and $f: M \to N$ is a 1-Lipschitz map, there is a canonically induced 1-Lipschitz map $f_{ur}: M_{ur} \to N_{ur}$ defined by $f_{ur}(q^M(x)) := q^N(f(x))$. It is easy to verify that this is a well-defined 1-Lipschitz map and that the functorial property $(f \circ g)_{ur} = f_{ur} \circ g_{ur}$ holds.
\end{definition}

\begin{remark} 
Let us explain an equivalent characterization of the pseudometric $d_{ur}$ induced by any isometric embedding $M \hookrightarrow X$ into a Banach space. It holds that
$$ d_{ur}(x,y) = \inf_{I,K,\gamma} \lambda(I \setminus \gamma^{-1}(K)) $$
where the infimum is over all compact intervals $I \subset \RR$, compact subsets $K \subset M$, and 1-Lipschitz curves $\gamma: I \to X$ with $\gamma(\min(I)) = x$ and $\gamma(\max(I)) = y$. The equality follows from the fact that for every compact $K \subset \RR$ and 1-Lipschitz map $\gamma: K \to M$, there exists a 1-Lipschitz extension $[\min(K),\max(K)] \to X$ by interpolating with line segments.

The pseudometric $d_{ur}$ admits yet another equivalent characterization in terms of curve-flat Lipschitz functions - see Definition \ref{def:curveflat} and Proposition \ref{prop:dur=dG}.

As will be proved in Proposition \ref{prop:collapse}, the map $q: M \to M_{ur}$ collapses every curve fragment $\gamma(K)$ down to an $\HH^1$-null set.
\end{remark}

\begin{example}[Subsets of $\RR$]
It is readily seen that if $M \subset \RR$ is compact, then $d_{ur}(x,y)=\left|\int_x^y \mathsf{1}_{\RR \setminus M}\,d\lambda \right|=\abs{f(x)-f(y)}$, where $f(x) := \int_0^x \mathsf{1}_{\RR \setminus M}\,d\lambda$. Since $\lambda(f(M))=0$, $M_{ur}$ is isometric to a $\lambda$-null subset of $\RR$ and hence is purely 1-unrectifiable.
\end{example}

\begin{example}[Topologist's Sine Curve] \label{ex:topsinecurve}
Obviously, if $(M,d)$ is rectifiably-connected, $d_{ur} \equiv 0$ and $M_{ur}$ is a single point. The converse statement is false, and the topologist's sine curve
$$(\{0\} \times [0,1]) \cup \{(x,\sin(\tfrac{1}{x})) \in \RR^2: x \in (0,1]\} \subset \RR^2$$
is a counterexample. For any $y \in \{(x,\sin(\frac{1}{x})) \in \RR^2: x \in (0,1]\}$, $z \in \{0\} \times [0,1]$, and $\eps > 0$, there is a rectifiable curve starting at $y$ and ending within a distance $\eps$ from $z$. However, the length of this curve necessarily goes to $\infty$ as $\eps \to 0$, and this prevents true rectifiable-connectedness.
\end{example}

\begin{proposition} \label{thm:isop1u}
Let $(M,d)$ be a metric space. Then $q: M \to M_{ur}$ is an isometry if and only if $M$ is purely 1-unrectifiable.
\end{proposition}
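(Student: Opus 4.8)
The plan is to compare $d_{ur}$ with $d$ directly. One inequality is automatic: taking $K=\{0,d(x,y)\}$ with $\gamma(0)=x$ and $\gamma(d(x,y))=y$ shows $d_{ur}(x,y)\le d(x,y)$ for all $x,y$, so $q$ is an isometry precisely when $d_{ur}(x,y)\ge d(x,y)$ for all $x,y\in M$. Throughout I will isometrically embed $M$ into a Banach space $X$; by the remark following the definition of $d_{ur}$, any $1$-Lipschitz $\gamma\colon K\to M$ with $K\subset\RR$ compact extends to a $1$-Lipschitz curve $\tilde\gamma\colon[\min(K),\max(K)]\to X$ obtained by interpolating with line segments.

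For the ``if'' direction, suppose $M$ is purely $1$-unrectifiable and fix $x,y\in M$ and a witness $(K,\gamma)$ for $d_{ur}(x,y)$; write $a=\min(K)$, $b=\max(K)$ and let $\tilde\gamma\colon[a,b]\to X$ be the extension above, affine on each component $(c_i,d_i)$ of $[a,b]\setminus K$. Since $K$ is closed with $a,b\in K$, we have the exact decomposition $[a,b]=K\sqcup\bigsqcup_i(c_i,d_i)$; combining this with $TV_{\tilde\gamma}\le\lambda$ (valid since $\tilde\gamma$ is $1$-Lipschitz) gives
\[
d(x,y)=\norm{\tilde\gamma(b)-\tilde\gamma(a)}\le TV_{\tilde\gamma}([a,b])=TV_{\tilde\gamma}(K)+\sum_i TV_{\tilde\gamma}\big((c_i,d_i)\big)\le TV_{\tilde\gamma}(K)+\lambda\big([a,b]\setminus K\big).
\]
The heart of the matter is the claim $TV_{\tilde\gamma}(K)=0$, which I would obtain from the area formula for Lipschitz maps into metric spaces \cite{Kirchheim_1994}: $TV_{\tilde\gamma}(K)=\int_X\#\big(K\cap\tilde\gamma^{-1}(z)\big)\,d\HH^1(z)$, and the integrand is supported on $\tilde\gamma(K)=\gamma(K)$, which is $\HH^1$-null because $\gamma\colon K\to M$ is Lipschitz and $M$ is purely $1$-unrectifiable. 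Then $d(x,y)\le\lambda([a,b]\setminus K)$; taking the infimum over witnesses gives $d(x,y)\le d_{ur}(x,y)$. This identity $TV_{\tilde\gamma}(K)=0$ — the only point where pure $1$-unrectifiability genuinely enters — is the main obstacle; the rest is bookkeeping.

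For the ``only if'' direction I argue by contraposition. If $M$ is not purely $1$-unrectifiable then by \cite[Theorem 9]{Kirchheim_1994} it contains a curve fragment, and after rescaling the domain we may assume we have a $1$-Lipschitz $\gamma_0\colon K_0\to M$ with $K_0\subset\RR$ compact of positive Lebesgue measure satisfying $d(\gamma_0(s),\gamma_0(t))\ge\kappa\abs{s-t}$ for all $s,t\in K_0$ and some $\kappa\in(0,1]$. Choose a Lebesgue density point $t_0$ of $K_0$ and $\rho>0$ with $\lambda\big(K_0\cap[t_0-\rho,t_0+\rho]\big)>(1-\tfrac{\kappa}{2})\cdot2\rho$, and set $K=K_0\cap[t_0-\rho,t_0+\rho]$, $a=\min(K)$, $b=\max(K)$ (with $a<b$ since $\lambda(K)>0$), $\gamma=\gamma_0\restrict_K$. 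As $[a,b]\subset[t_0-\rho,t_0+\rho]$, we get $\lambda(K)>(1-\tfrac{\kappa}{2})\cdot2\rho\ge(1-\tfrac{\kappa}{2})(b-a)$, hence $\lambda([a,b]\setminus K)<\tfrac{\kappa}{2}(b-a)$, and therefore
\[
d_{ur}\big(\gamma(a),\gamma(b)\big)\le\lambda\big([a,b]\setminus K\big)<\tfrac{\kappa}{2}(b-a)<\kappa(b-a)\le d\big(\gamma(a),\gamma(b)\big),
\]
with $\gamma(a)\neq\gamma(b)$ since the last quantity is positive. Hence $q$ fails to be an isometry, which completes the plan.
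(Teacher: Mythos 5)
Your proposal is correct, and the ``only if'' direction (curve fragment $\Rightarrow$ $q$ not an isometry) is essentially the paper's argument: restrict to a neighborhood of a Lebesgue density point of the fragment's domain so that the gaps have small total length compared with the lower bi-Lipschitz bound; your bookkeeping with $\kappa$ and $1-\kappa/2$ checks out. The ``if'' direction uses the same decomposition as the paper --- split the connecting curve into the part over $K$ and the part over the gaps, bound the latter by $\lambda([a,b]\setminus K)$, and show the former contributes nothing --- but you implement it on the domain side via total variation, whereas the paper works on the image side. Concretely, the paper writes $d(x,y)\le\HH^1(\gamma(I))=\HH^1(\gamma(I)\cap K)+\HH^1(\gamma(I)\setminus K)$, using only subadditivity of $\HH^1$, the fact that $1$-Lipschitz maps do not increase $\HH^1$, and the elementary fact that a connected set containing $x$ and $y$ has $\HH^1$-measure at least $d(x,y)$; pure $1$-unrectifiability then kills $\HH^1(\gamma(I)\cap K)$ directly. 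You instead need Kirchheim's area formula (or at least the identification of $TV_{\tilde\gamma}\restrict_K$ with $\int_K md_{\tilde\gamma}\,d\lambda$ and the inequality bounding this by $\int\#(K\cap\tilde\gamma^{-1}(z))\,d\HH^1(z)$) to convert $\HH^1(\gamma(K))=0$ into $TV_{\tilde\gamma}(K)=0$. Both routes are sound; the paper's is slightly more elementary in that it avoids the area formula, while yours has the minor advantage of yielding the inequality $d(x,y)\le d_{ur}(x,y)$ in a directly quantitative form rather than by contraposition.
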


\begin{proof}
Assume $q: M \to M_{ur}$ is not an isometry. Then there exist $x,y \in M$ such that $d_{ur}(x,y) < d(x,y)$. Isometrically embed $M$ into a Banach space $X$. Then there exist a compact interval $I \subset \RR$, a compact subset $K \subset M$, and a 1-Lipschitz curve $\gamma: I \to X$ such that $\gamma(\min(I)) = x$, $\gamma(\max(I)) = y$, and $\lambda(I \setminus \gamma^{-1}(K)) < d(x,y)$. Then we have
\begin{align*}
d(x,y) \leq \HH^1(\gamma(I)) &= \HH^1(\gamma(I) \cap K) + \HH^1(\gamma(I) \setminus K) \\
&\leq \HH^1(\gamma(I) \cap K) + \lambda(I \setminus \gamma^{-1}(K)) \\
&< \HH^1(\gamma(I) \cap K) + d(x,y)
\end{align*}
where the second-to-last inequality follows from the fact that $\gamma$ is 1-Lipschitz. This shows $\HH^1(\gamma(I) \cap K) > 0$ and hence $M$ is not purely 1-unrectifiable.

Now assume $M$ is not purely 1-unrectifiable. Then there exists a compact $K' \subset \RR$ with $\lambda(K') > 0$ and a bi-Lipschitz embedding $\gamma: K' \to M$. By precomposing with a dilation, we may assume $\gamma$ is 1-Lipschitz and $\gamma^{-1}$ is $L$-Lipschitz for some $L \in [1,\infty)$. By Lebesgue's density theorem, there exist $t \in K'$ and $r > 0$ such that $\lambda([t,t+r] \setminus K') < \frac{r}{4L}$. Set $K := [t,t+r] \cap K'$. It must hold that $[t,t+\frac{r}{4L}] \cap K' \neq \emptyset$ and $[t+r-\frac{r}{4L},t+r] \cap K' \neq \emptyset$, because otherwise we would have $\lambda([t,t+r] \setminus K') \geq \frac{r}{2L}$. This implies $\max(K) - \min(K) \geq r-\frac{r}{2L} \geq \frac{r}{2}$, and of course we have a fortiori that $\lambda([\min(K),\max(K)] \setminus K) < \frac{r}{4L}$. Then set $x := \gamma(\min(K))$ and $y := \gamma(\max(K))$. We have a compact $K \subset \RR$ and a 1-Lipschitz map $\gamma: K \to M$ such that $\gamma(\min(K)) = x$, $\gamma(\max(K)) = y$, and $\lambda([\min(K),\max(K)] \setminus K) < \frac{r}{4L}$, showing $d_{ur}(x,y) < \frac{r}{4L}$. But also, the facts that $\max(K) - \min(K) \geq \frac{r}{2}$ and $\gamma^{-1}$ is $L$-Lipschitz imply $d(x,y) \geq \frac{r}{2L}$. Hence, $d_{ur}(x,y) < \frac{r}{4L} < \frac{r}{2L} \leq d(x,y)$, and $q$ is not an isometry.
\end{proof}

\begin{definition} \label{def:Malpha}
Let $(M,d)$ be a metric space. We recursively define a transfinite sequence of metric spaces $(M_{ur}^{(\alpha)},d_{ur}^{(\alpha)})$ and 1-Lipschitz surjections $q_\alpha: M \to M_{ur}^{(\alpha)}$. First, define $(M_{ur}^{(0)},d_{ur}^{(0)})$ to be $(M,d)$ and $q_0$ to be the identity map on $M$. Next, fix an ordinal $\alpha > 0$ and suppose the definition has been made for all $\alpha' < \alpha$. If $\alpha$ is a successor, we define $(M_{ur}^{(\alpha)},d_{ur}^{(\alpha)}) := ((M_{ur}^{(\alpha-1)})_{ur},(d_{ur}^{(\alpha-1)})_{ur})$ and $q_\alpha := q \circ q_{\alpha-1}$ where $q=q^{M_{ur}^{(\alpha-1)}}$ is the 1-Lipschitz surjection $M_{ur}^{(\alpha-1)} \to (M_{ur}^{(\alpha-1)})_{ur}$. If $\alpha$ is a limit ordinal, we define a pseudometric $d_{ur}^{(\alpha)}$ on $M$ by
$$d_{ur}^{(\alpha)}(x,y) := \inf_{\alpha' < \alpha} d_{ur}^{(\alpha')}(q_{\alpha'}(x),q_{\alpha'}(y)).$$
After identifying any points $x,y$ with $d_{ur}^{(\alpha)}(x,y) = 0$, we obtain a metric space $(M_{ur}^{(\alpha)},d_{ur}^{(\alpha)})$ and a canonical 1-Lipschitz surjection $q_\alpha: M \to M_{ur}^{(\alpha)}$. When we wish to emphasize the domain $M$, we write $q^M_\alpha$. Whenever $N$ is a second metric space and $f: M \to N$ is 1-Lipschitz, we get induced 1-Lipschitz maps $f_{ur}^{(\alpha)}: M_{ur}^{(\alpha)} \to N_{ur}^{(\alpha)}$ defined by $f_{ur}^{(\alpha)}(q^M_\alpha(x)) = q^N_\alpha(f(x))$. Well-definedness can be verified by transfinite induction, as well as the functorial property $(f \circ g)_{ur}^{(\alpha)} = f_{ur}^{(\alpha)} \circ g_{ur}^{(\alpha)}$.
\end{definition}

Let us now see that this iterative process always stabilizes after at most countably many steps when $M$ is separable. 

\begin{proposition} \label{prop:stabilize}
For any separable 
metric space $M$, there exists a countable ordinal $\alpha_M$ such that $q: M_{ur}^{(\alpha_M)} \to (M_{ur}^{(\alpha_M)})_{ur}$ is an isometry.
\end{proposition}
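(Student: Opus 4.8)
The plan is to transfer the whole construction to pseudometrics on $M$ and then invoke the elementary fact that a non-increasing transfinite sequence of real numbers stabilizes. For each ordinal $\alpha$ define $\rho_\alpha: M\times M\to[0,\infty)$ by $\rho_\alpha(x,y):=d_{ur}^{(\alpha)}(q_\alpha(x),q_\alpha(y))$. By the well-definedness asserted in Definition~\ref{def:Malpha}, each $\rho_\alpha$ is a pseudometric on $M$ and $\rho_0=d$. I would first observe that $q: M_{ur}^{(\alpha)}\to(M_{ur}^{(\alpha)})_{ur}$ is an isometry if and only if $\rho_\alpha=\rho_{\alpha+1}$ on $M\times M$: indeed, $q_{\alpha+1}=q\circ q_\alpha$ gives $\rho_{\alpha+1}(x,y)=d_{ur}^{(\alpha+1)}(q(q_\alpha(x)),q(q_\alpha(y)))$ while $\rho_\alpha(x,y)=d_{ur}^{(\alpha)}(q_\alpha(x),q_\alpha(y))$, and $q_\alpha$ is surjective, so the equality of the $\rho$'s is exactly the statement that $q$ preserves distances. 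Hence it suffices to produce a countable ordinal $\alpha_M$ with $\rho_{\alpha_M}=\rho_{\alpha_M+1}$ on $M\times M$.

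Next I would record two structural facts about the sequence $(\rho_\alpha)$. (i) It is non-increasing: for a successor $\alpha+1$, since $q:M_{ur}^{(\alpha)}\to(M_{ur}^{(\alpha)})_{ur}$ is $1$-Lipschitz we get $\rho_{\alpha+1}(x,y)=(d_{ur}^{(\alpha)})_{ur}(q(q_\alpha(x)),q(q_\alpha(y)))\leq d_{ur}^{(\alpha)}(q_\alpha(x),q_\alpha(y))=\rho_\alpha(x,y)$; for a limit $\alpha$ it is immediate from the defining infimum; the general statement $\rho_\beta\leq\rho_\alpha$ for $\alpha\leq\beta$ then follows by transfinite induction. (ii) Since $\rho_\alpha\leq\rho_0=d$ and $\rho_\alpha$ satisfies the triangle inequality, $|\rho_\alpha(x,y)-\rho_\alpha(x',y')|\leq d(x,x')+d(y,y')$; hence each $\rho_\alpha$ is continuous on $M\times M$ and is therefore determined by its restriction to $D\times D$ for any dense $D\subset M$.

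Now take $D\subset M$ countable and dense (here is where separability enters). For a fixed pair $(p,p')\in D\times D$, the map $\alpha\mapsto\rho_\alpha(p,p')$ on the ordinals $\alpha<\omega_1$ is non-increasing with values in $[0,\infty)$; letting $c$ be its infimum over $\alpha<\omega_1$, choosing $\gamma_n<\omega_1$ with $\rho_{\gamma_n}(p,p')<c+\tfrac1n$, and setting $\beta_{p,p'}:=\sup_n\gamma_n$ (a countable supremum of countable ordinals, hence $<\omega_1$), monotonicity forces $\rho_\alpha(p,p')=c$ for every $\alpha$ with $\beta_{p,p'}\leq\alpha<\omega_1$. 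Put $\alpha_M:=\sup\{\beta_{p,p'}:(p,p')\in D\times D\}$, again a countable supremum of countable ordinals, so $\alpha_M<\omega_1$ and also $\alpha_M+1<\omega_1$. Then $\rho_{\alpha_M}$ and $\rho_{\alpha_M+1}$ agree on $D\times D$, hence on all of $M\times M$ by (ii) (approximate $x,y$ by points of $D$ and combine $\rho_{\alpha_M+1}\leq\rho_{\alpha_M}$ with continuity of $\rho_{\alpha_M}$). By the reduction in the first paragraph, $q:M_{ur}^{(\alpha_M)}\to(M_{ur}^{(\alpha_M)})_{ur}$ is an isometry, which is what we wanted.

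The argument is essentially bookkeeping: the only point needing care is keeping the monotonicity and continuity of the auxiliary pseudometrics $\rho_\alpha$ straight, after which everything reduces to the standard observation that a non-increasing $\omega_1$-indexed sequence of reals is eventually constant below $\omega_1$, applied separately to each of the countably many pairs in $D\times D$. Equivalently, one may phrase the conclusion through Proposition~\ref{thm:isop1u}: the countable ordinal $\alpha_M$ is such that $M_{ur}^{(\alpha_M)}$ is purely $1$-unrectifiable.
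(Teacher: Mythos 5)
Your proposal is correct and follows essentially the same route as the paper's proof: pointwise stabilization of the non-increasing $\omega_1$-indexed sequence of distances for each pair in a countable dense set $D$, taking the supremum of the resulting countably many countable ordinals, and then passing from $D\times D$ to all of $M\times M$ by density and the $1$-Lipschitz continuity of the pseudometrics. Your write-up merely makes explicit some details (the reformulation via the pseudometrics $\rho_\alpha$ and their continuity) that the paper's proof leaves implicit.
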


\begin{proof}
Let $(M,d)$ be a separable metric space and $D\subset M$ be a countable dense subset. Fix $(x,y) \in D \times D$. Then we get a nonincreasing map $f: \omega_1 \to [0,d(x,y)]$ given by $f(\alpha) := d_{ur}^{(\alpha)}(q_\alpha(x),q_\alpha(y))$. It is easy to see that there is $\alpha_{(x,y)}<\omega_1$ such that $f(\alpha) = f(\alpha_{(x,y)})$ for all $\alpha \in [\alpha_{(x,y)},\omega_1)$. Then we set
$$\alpha_M := \sup\set{ \alpha_{(x,y)} \,:\, (x,y) \in D \times D}$$
and note that $\alpha_M$ is countable since $D \times D$ and each $\alpha_{(x,y)}$ are countable. Observe that, for all $(x,y) \in D \times D$,
$$d_{ur}^{(\alpha_M+1)}(q_{\alpha_M+1}(x),q_{\alpha_M+1}(y)) = d_{ur}^{(\alpha_M)}(q_{\alpha_M}(x),q_{\alpha_M}(y))$$
and thus $q: M_{ur}^{(\alpha_M)} \to (M_{ur}^{(\alpha_M)})_{ur}$ is an isometry restricted to $q_{\alpha_M}(D)$. By density and continuity, $q$ must be an isometry on all of $M_{ur}^{(\alpha_M)}$.
\end{proof}

When $M$ is 1-rectifiable, $\alpha_M \leq 1$. It may happen in general that $\alpha_M > 1$, and in fact we believe $\alpha_M$ can be an arbitrarily large countable ordinal for $M$ compact. See Examples \ref{ex:rectifiable} and \ref{ex:bta}.

\begin{proposition} \label{thm:universalp1u}
Let $M$ be a separable metric space. Then $M_{ur}^{(\omega_1)}$ is purely 1-unrectifiable and satisfies the following universal property: whenever $N$ is a purely 1-unrectifiable metric space and $f: M \to N$ is a 1-Lipschitz map, there exists a unique 1-Lipschitz map $\tilde{f}: M_{ur}^{(\omega_1)} \to N$ such that $f = \tilde{f} \circ q^M_{\omega_1}$.
\end{proposition}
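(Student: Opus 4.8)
The plan is to reduce everything to two facts already at hand: Proposition~\ref{thm:isop1u}, which says that the canonical surjection $q$ is an isometry precisely when its source is purely 1-unrectifiable, and Proposition~\ref{prop:stabilize}, which says the iteration stabilizes at a countable stage. Once I know the sequence $(M_{ur}^{(\alpha)})_\alpha$ has stabilized by stage $\omega_1$, both the pure 1-unrectifiability of $M_{ur}^{(\omega_1)}$ and the universal property follow quickly from the functoriality packaged into Definition~\ref{def:Malpha}.

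\emph{Pure 1-unrectifiability.} By Proposition~\ref{prop:stabilize} there is a countable ordinal $\alpha_M<\omega_1$ with $q\colon M_{ur}^{(\alpha_M)}\to (M_{ur}^{(\alpha_M)})_{ur}$ an isometry, so $M_{ur}^{(\alpha_M)}$ is purely 1-unrectifiable by Proposition~\ref{thm:isop1u}. I would then prove by transfinite induction on $\alpha\geq\alpha_M$ that the pullback to $M$ of the metric $d_{ur}^{(\alpha)}$ via $q_\alpha$ equals the pullback of $d_{ur}^{(\alpha_M)}$ via $q_{\alpha_M}$ (equivalently, that $M_{ur}^{(\alpha)}$ and $M_{ur}^{(\alpha_M)}$ coincide as quotients of $M$). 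In the successor step, the inductive hypothesis makes $M_{ur}^{(\gamma)}$ isometric to $M_{ur}^{(\alpha_M)}$, hence purely 1-unrectifiable, so applying the $ur$-construction to it changes nothing by Proposition~\ref{thm:isop1u}; in the limit step, the sequence of pullback pseudometrics on $M$ is nonincreasing in $\alpha$ (each $q$ is $1$-Lipschitz, and at limits one takes an infimum), so the infimum defining $d_{ur}^{(\alpha)}$ is already attained at any index $\geq\alpha_M$ and equals the pullback of $d_{ur}^{(\alpha_M)}$. Since $\alpha_M<\omega_1$, this gives $M_{ur}^{(\omega_1)}$ isometric to $M_{ur}^{(\alpha_M)}$, hence purely 1-unrectifiable.

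\emph{Universal property.} Uniqueness of $\tilde f$ is immediate because $q^M_{\omega_1}$ is surjective. For existence, observe that if $N$ is purely 1-unrectifiable then $q^N\colon N\to N_{ur}$ is an isometry by Proposition~\ref{thm:isop1u}, and a transfinite induction of exactly the same shape as above (at limit ordinals the infimum of a constant sequence of pseudometrics is that constant) shows $q^N_\alpha\colon N\to N_{ur}^{(\alpha)}$ is an isometric bijection for every ordinal $\alpha$, in particular for $\alpha=\omega_1$. Now let $f_{ur}^{(\omega_1)}\colon M_{ur}^{(\omega_1)}\to N_{ur}^{(\omega_1)}$ be the induced $1$-Lipschitz map from Definition~\ref{def:Malpha}, so that $f_{ur}^{(\omega_1)}\circ q^M_{\omega_1}=q^N_{\omega_1}\circ f$, and set $\tilde f:=(q^N_{\omega_1})^{-1}\circ f_{ur}^{(\omega_1)}$. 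Then $\tilde f$ is $1$-Lipschitz as a composition of $1$-Lipschitz maps, and $\tilde f\circ q^M_{\omega_1}=(q^N_{\omega_1})^{-1}\circ q^N_{\omega_1}\circ f=f$, as required.

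The only delicate point is the bookkeeping in the transfinite inductions — in particular keeping the two meanings of $d_{ur}^{(\alpha)}$ (the metric on $M_{ur}^{(\alpha)}$ versus its pullback pseudometric on $M$) straight, and handling the limit-ordinal step — but I do not expect a genuine obstacle: the whole argument is driven by Propositions~\ref{thm:isop1u} and~\ref{prop:stabilize} together with the elementary fact that the pullback pseudometrics $d_{ur}^{(\alpha)}\circ(q_\alpha\times q_\alpha)$ are nonincreasing in $\alpha$.
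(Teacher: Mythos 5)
Your proposal is correct and follows essentially the same route as the paper: pure 1-unrectifiability of $M_{ur}^{(\omega_1)}$ via Propositions \ref{thm:isop1u} and \ref{prop:stabilize} plus transfinite induction, and the universal property via the induced map $f_{ur}^{(\omega_1)}$ together with the fact that $q^N_{\omega_1}$ is an isometry when $N$ is purely 1-unrectifiable. You merely spell out the transfinite inductions that the paper leaves implicit, and those details check out.
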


\begin{proof}
That $M_{ur}^{(\omega_1)}$ is purely 1-unrectifiable follows from Propositions~\ref{thm:isop1u} and \ref{prop:stabilize} and transfinite induction. Now let $f: M \to N$ be a 1-Lipschitz map to a purely 1-unrectifiable metric space $N$. Then we get an induced 1-Lipschitz map $f_{ur}^{(\omega_1)}: M_{ur}^{(\omega_1)} \to N_{ur}^{(\omega_1)}$. Since $N$ is purely 1-unrectifiable, $q^N_{\omega_1}: N \to N_{ur}^{(\omega_1)}$ is an isometry by Proposition~\ref{thm:isop1u}. Then $\tilde{f} := (q_{\omega_1}^N)^{-1} \circ f_{ur}^{(\omega_1)}$ satisfies $f = \tilde{f} \circ q^M_{\omega_1}$. Uniqueness follows from the surjectivity of $q_{\omega_1}^M$.
\end{proof}

By \cite[Corollary 8.13]{Weaver2}, $\lip(M_{\LL})$ separates points uniformly when $M$ is compact, and thus $M_{\LL}$ is purely 1-unrectifiable by Theorem~\ref{ThmA}. Hence the canonical 1-Lipschitz surjection $\pi: M \to M_{\LL}$ induces a 1-Lipschitz map $\tilde{\pi}: M_{ur}^{(\omega_1)} \to M_{\LL}$ by Proposition~\ref{thm:universalp1u}. The next theorem is the main one of this section.

\begin{theorem} \label{thm:ML=Mur}
For every compact metric space $M$, the map $\tilde{\pi}: M_{ur}^{(\omega_1)} \to M_{\LL}$ is an isometry.
\end{theorem}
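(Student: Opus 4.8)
The plan is to show that the 1-Lipschitz surjection $\tilde{\pi}\colon M_{ur}^{(\omega_1)}\to M_\LL$ (which exists by Proposition~\ref{thm:universalp1u} and satisfies $\pi=\tilde{\pi}\circ q^M_{\omega_1}$) is moreover distance non-decreasing; since it is 1-Lipschitz and onto---it is onto because $\pi$ and $q^M_{\omega_1}$ are---this forces it to be a bijective isometry. Fix $x,y\in M$, put $u=q^M_{\omega_1}(x)$ and $v=q^M_{\omega_1}(y)$, and observe that $d_{M_\LL}(\tilde{\pi}(u),\tilde{\pi}(v))=d_{M_\LL}(\pi(x),\pi(y))=d_\LL(x,y)=\sup\{\,|h(x)-h(y)| : h\in\ball{\lip(M)}\,\}$, while $\tilde{\pi}$ being 1-Lipschitz already gives $d_{M_\LL}(\tilde{\pi}(u),\tilde{\pi}(v))\le d_{ur}^{(\omega_1)}(u,v)$. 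So the entire content is the reverse inequality, and it suffices to produce, for each $\delta>0$, some $h\in\ball{\lip(M)}$ with $h(x)-h(y)\ge d_{ur}^{(\omega_1)}(u,v)-\delta$.

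First I would record that $M_{ur}^{(\omega_1)}$ is compact: it is the image of the compact space $M$ under the continuous (1-Lipschitz) map $q^M_{\omega_1}$. Since $M_{ur}^{(\omega_1)}$ is also purely 1-unrectifiable by Proposition~\ref{thm:universalp1u}, Theorem~\ref{thm:spu} applies to it. Invoking that theorem with base point $v$ and parameter $\delta$ produces $g\in\ball{\lip(M_{ur}^{(\omega_1)})}$ with $g(z)-g(v)\ge d_{ur}^{(\omega_1)}(v,z)-\delta$ for every $z\in M_{ur}^{(\omega_1)}$; taking $z=u$ gives $g(u)-g(v)\ge d_{ur}^{(\omega_1)}(u,v)-\delta$.

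Then I would pull $g$ back. Since $q^M_{\omega_1}$ is 1-Lipschitz and $g$ is locally flat, the composition $h:=g\circ q^M_{\omega_1}$ is locally flat (composition of a Lipschitz map with a locally flat function), and being 1-Lipschitz it lies in $\ball{\lip(M)}$. Hence $d_\LL(x,y)\ge h(x)-h(y)=g(u)-g(v)\ge d_{ur}^{(\omega_1)}(u,v)-\delta$. Letting $\delta\to 0$ yields $d_{M_\LL}(\tilde{\pi}(u),\tilde{\pi}(v))=d_\LL(x,y)\ge d_{ur}^{(\omega_1)}(u,v)$, and together with the opposite inequality noted above we obtain $d_{M_\LL}(\tilde{\pi}(u),\tilde{\pi}(v))=d_{ur}^{(\omega_1)}(u,v)$ for all $u,v\in M_{ur}^{(\omega_1)}$. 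Thus $\tilde{\pi}$ is an isometric bijection.

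The argument is a short diagram chase once Theorem~\ref{thm:spu} (equivalently Theorem~\ref{ThmA}) and Proposition~\ref{thm:universalp1u} are available, so in a real sense all of the difficulty has been front-loaded into those results and there is no further obstacle. The one subtlety to be careful about is that Theorem~\ref{thm:spu} must be used in its strong, separation-constant-one form: a weaker separating function that is only $C$-Lipschitz with $C>1$ would give merely that $\tilde{\pi}$ is bi-Lipschitz, not an isometry. For the same reason the observation that $M_{ur}^{(\omega_1)}$ inherits compactness from $M$---a hypothesis of Theorem~\ref{thm:spu}---should not be glossed over.
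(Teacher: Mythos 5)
Your proof is correct and rests on the same two pillars as the paper's: Proposition~\ref{thm:universalp1u} (so that $M_{ur}^{(\omega_1)}$ is compact, purely 1-unrectifiable, and $\tilde{\pi}$ exists) together with the separation-constant-one form of Theorem~\ref{thm:spu} applied to $M_{ur}^{(\omega_1)}$. The only difference is presentational: the paper packages your pullback step $h = g\circ q_{\omega_1}$ into the universal property of $\pi\colon M\to M_{\LL}$ and uses it to manufacture a 1-Lipschitz inverse $\tilde{q}_{\omega_1}$ of $\tilde{\pi}$, whereas you verify the distance equality directly, so the two arguments are essentially the same.
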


Before proving the theorem, we need to make a small observation. For every metric space $M$, the map $\pi: M \to M_\LL$ satisfies the following universal property: whenever $N$ is a metric space for which $\lip(N)$ separates points uniformly with separation constant 1 and $f: M \to N$ is 1-Lipschitz, there exists a unique 1-Lipschitz map $\tilde{f}: M_\LL \to N$ such that $f = \tilde{f} \circ \pi$. This observation can be proven directly from the definitions and the fact that $g \circ f \in \lip(M)$ whenever $f: M \to N$ is Lipschitz and $g \in \lip(N)$.

\begin{proof}
Let $M$ be a compact metric space, so that $M_{ur}^{(\omega_1)}$ is also compact because $q_{\omega_1}: M \to M_{ur}^{(\omega_1)}$ is a Lipschitz surjection. Then by Proposition~\ref{thm:universalp1u} and Theorem~\ref{ThmA}, $\lip(M_{ur}^{(\omega_1)})$ separates points uniformly. Thus, by the universal property of $\pi: M \to M_{\LL}$, there exists a unique 1-Lipschitz map $\tilde{q}_{\omega_1}: M_{\LL} \to M_{ur}^{(\omega_1)}$ such that $\tilde{q}_{\omega_1} \circ \pi = q_{\omega_1}$. 

\noindent\begin{minipage}{0.6\linewidth}
Since $q_{\omega_1}$ is surjective, so is $\tilde{q}_{\omega_1}$. Similarly $\tilde{\pi} \circ q_{\omega_1} = \pi$ and $\pi$ is surjective, hence so is $\tilde{\pi}$. We get $\tilde{q}_{\omega_1} \circ \tilde{\pi} \circ q_{\omega_1} = q_{\omega_1}$ and, since all these maps are surjective, $\tilde{q}_{\omega_1}$ and $\tilde{\pi}$ are inverses. Then since both are 1-Lipschitz, both are isometries.
\end{minipage}
\begin{minipage}{0.4\linewidth}
$$
\xymatrix{
M \ar[d]_{\pi} \ar[rr]^{q_{\omega_1}} && M_{ur}^{(\omega_1)} \ar@<-0.5ex>[lld]_{\tilde{\pi}} \\
M_{\LL} \ar@<-0.5ex>[rru]_{\tilde{q}_{\omega_1}}
}
$$
\end{minipage}
\end{proof}

\begin{corollary} \label{cor:Whitney}
A compact metric space $M$ fails to be 1-critical if and only if $M_{ur}^{(\omega_1)}$ is a single point. 
\end{corollary}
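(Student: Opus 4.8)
The plan is to deduce Corollary~\ref{cor:Whitney} directly from Theorem~\ref{thm:ML=Mur} together with the elementary observation already recorded in the text that $M$ fails to be 1-critical if and only if $M_{\LL}$ is a single point. Recall that 1-criticality means $\lip_0(M) \neq \{0\}$, i.e.\ there is a nonconstant locally flat Lipschitz function on $M$; equivalently, there are two points $x,y \in M$ with $d_{\LL}(x,y) > 0$, which is precisely the statement that $M_{\LL}$ has more than one point. So ``$M$ fails to be 1-critical'' $\Longleftrightarrow$ ``$M_{\LL}$ is a single point''.

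Next I would invoke Theorem~\ref{thm:ML=Mur}, which asserts that for compact $M$ the canonical map $\tilde{\pi}\colon M_{ur}^{(\omega_1)} \to M_{\LL}$ is an isometry. An isometry is in particular a bijection, so $M_{ur}^{(\omega_1)}$ is a single point if and only if $M_{\LL}$ is a single point. Chaining the two equivalences gives exactly the statement of the corollary: $M$ fails to be 1-critical $\Longleftrightarrow$ $M_{\LL}$ is a single point $\Longleftrightarrow$ $M_{ur}^{(\omega_1)}$ is a single point.

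There is essentially no obstacle here; the content has all been front-loaded into Theorem~\ref{ThmA} (via Proposition~\ref{thm:universalp1u}) and Theorem~\ref{thm:ML=Mur}. The only point worth stating carefully is the first equivalence, and for that it suffices to note that $\lip_0(M)$ contains a nonconstant function exactly when it separates some pair of points, which by definition of $d_{\LL}$ is the same as $d_{\LL}$ not being identically zero, i.e.\ $M_{\LL}$ not being a single point (here one uses that $M$ is compact so that the $\lip(M)$ and $\lip_0(M)$ of the present paper coincide with Weaver's, as noted before Theorem~\ref{thm:ML=Mur}). Thus the proof is a two-line deduction, and I would write it as such.

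\begin{proof}
By definition, $M$ is 1-critical if and only if $\lip_0(M) \neq \{0\}$, which happens exactly when there exist $x, y \in M$ with $d_{\LL}(x,y) > 0$; that is, $M$ fails to be 1-critical if and only if $M_{\LL}$ is a single point. By Theorem~\ref{thm:ML=Mur}, the map $\tilde{\pi}\colon M_{ur}^{(\omega_1)} \to M_{\LL}$ is an isometry, hence a bijection, so $M_{\LL}$ is a single point if and only if $M_{ur}^{(\omega_1)}$ is a single point. Combining these two equivalences yields the claim.
\end{proof}
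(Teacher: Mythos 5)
Your proof is correct and is exactly the deduction the paper intends (the paper states the corollary without written proof, treating it as immediate from Theorem~\ref{thm:ML=Mur} and the observation that non-1-criticality is equivalent to $M_{\LL}$ being a single point). Nothing is missing.
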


\subsection{1-critical bounded turning arcs}
\label{subsec:boundedturning}
The object of this subsection is the proof of Theorem \ref{ThmF} (through Theorem \ref{thm:bttree}). We begin by recalling some standard definitions.

A metric space that is homeomorphic to a nonempty, compact interval is called a \textit{metric arc}. We always assume metric arcs are endowed with an order inherited from a homeomorphism to an interval. Although there are always two such orderings (except if $M$ is a single point), the choice is inconsequential.

A metric arc $(M,d)$ satisfying
$$d(x,y) \vee d(y,z) \leq d(x,z)$$
whenever $x \leq y \leq z$ is a \textit{1-bounded turning arc}. Equivalently, $\diam([x,z]) = d(x,z)$ whenever $x \leq z$. A metric space that is bi-Lipschitz equivalent to a 1-bounded turning arc is called a \textit{bounded turning arc}. This is equivalent to the existence of a constant $C < \infty$ such that $\diam([x,z]) \leq Cd(x,z)$ for every subarc $[x,z]$.

\begin{remark}
Whenever $M$ is a 1-bounded turning arc and $A \subset M$, there exists an interval (equivalently, subarc) $I \supset A$ with $\diam(I) = \diam(A)$. We will use this fact implicitly when dealing with coverings and Hausdorff content of subsets of $M$.

Bounded turning arcs were characterized by Meyer (\cite[Corollary 1.2]{Meyer_2011}) as precisely those metric spaces $(M,d)$ for which there exists a homeomorphism $f: [0,1] \to M$ and $H < \infty$ such that $|x-y| \leq |x-z|$ implies $d(f(x),f(y)) \leq H d(f(x),f(z))$; such a homeomorphism is called a weak quasisymmetry.
\end{remark}

The next proposition will be used frequently, often without reference, throughout this subsection.

\begin{proposition} \label{prop:qmonotone}
For every 1-bounded turning arc $M$, the space $M_{ur}$ is a 1-bounded turning arc and $q: M \to M_{ur}$ is monotone. Even more, for every ordinal $\alpha$, $M_{ur}^{(\alpha)}$ is a 1-bounded turning arc and $q_\alpha: M \to M_{ur}^{(\alpha)}$ is monotone.
\end{proposition}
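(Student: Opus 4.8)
The plan is to build, for every subarc $[x,z]\subset M$, a $1$-Lipschitz order-preserving retraction $r_{[x,z]}\colon M\to[x,z]$, and then to read everything off from it. First I would fix the order on the arc $M$ and define $r_{[x,z]}(w)$ to be $x$ if $w\le x$, to be $w$ if $x\le w\le z$, and to be $z$ if $w\ge z$. Checking that $r_{[x,z]}$ is $1$-Lipschitz is a short case analysis on the position of a pair $w\le w'$ relative to $x$ and $z$: in each of the (six) cases the required inequality $d(r_{[x,z]}(w),r_{[x,z]}(w'))\le d(w,w')$ is either trivial or is exactly one of the instances $d(a,b)\vee d(b,c)\le d(a,c)$ of the $1$-bounded turning hypothesis applied to the relevant ordered triple, using also the equivalent formulation $\diam([a,c])=d(a,c)$ when $a\le c$. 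By construction $r_{[x,z]}$ fixes $[x,z]$ pointwise, is monotone, and sends everything $\le x$ to $x$ and everything $\ge z$ to $z$.

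The key observation is that post-composition with these retractions makes the monotonicity of $d_{ur}$ along the arc immediate, and this is really the heart of the matter. Let $x\le y\le z$ in $M$ and let $K\subset\RR$ be compact carrying a $1$-Lipschitz curve $\gamma\colon K\to M$ with $\gamma(\min K)=x$ and $\gamma(\max K)=z$. Then $r_{[x,y]}\circ\gamma\colon K\to[x,y]$ is again $1$-Lipschitz, sends $\min K$ to $r_{[x,y]}(x)=x$ and $\max K$ to $r_{[x,y]}(z)=y$, and lives on the very same compact set $K$; hence $d_{ur}(x,y)\le\lambda([\min K,\max K]\setminus K)$, and taking the infimum over all admissible $K$ gives $d_{ur}(x,y)\le d_{ur}(x,z)$. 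The symmetric argument with $r_{[y,z]}$ gives $d_{ur}(y,z)\le d_{ur}(x,z)$, so
$$d_{ur}(x,y)\vee d_{ur}(y,z)\le d_{ur}(x,z)\qquad\text{whenever }x\le y\le z .$$
In particular the $d_{ur}$-classes are order-convex: if $d_{ur}(x,z)=0$ and $x\le y\le z$, then $d_{ur}(x,y)=0$.

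It remains to package this and run the transfinite induction. Since $M$ is compact and $d_{ur}\le d$ is a $1$-Lipschitz (hence continuous) pseudometric, $\{(x,y):d_{ur}(x,y)=0\}$ is a closed equivalence relation whose classes, being order-convex and closed, are (possibly degenerate) closed subarcs of $M$; collapsing a disjoint family of closed subintervals of an arc yields an arc or a point by the classical characterization of the arc as a metric continuum with exactly two non-cut-points, so $M_{ur}$ is an arc, $q\colon M\to M_{ur}$ preserves the inherited order and is therefore monotone, and the displayed inequality, lifted to order-monotone triples of representatives, shows $M_{ur}$ is $1$-bounded turning. For the transfinite statement I would induct on $\alpha$: the successor step applies the case just proved to $N:=M_{ur}^{(\alpha-1)}$ (a $1$-bounded turning arc by hypothesis), with $q_\alpha=q^{N}\circ q_{\alpha-1}$ a composition of monotone maps between arcs, hence monotone; for limit $\alpha$ one uses that each $q_{\alpha'}$ ($\alpha'<\alpha$) is monotone and each $M_{ur}^{(\alpha')}$ is $1$-bounded turning to get $d_{ur}^{(\alpha')}(q_{\alpha'}x,q_{\alpha'}y)\le d_{ur}^{(\alpha')}(q_{\alpha'}x,q_{\alpha'}z)$ for $x\le y\le z$, and then takes the infimum over $\alpha'<\alpha$ to obtain the $1$-bounded turning inequality for $d_{ur}^{(\alpha)}$ and order-convexity of its classes, after which the quotient argument applies verbatim. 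The only genuinely non-formal ingredient is the retraction lemma of the first paragraph; the identification of $M_{ur}$ as an arc is standard continuum theory, and everything else is bookkeeping.
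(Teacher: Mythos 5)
Your proposal is correct, and its overall architecture (establish $d_{ur}(x,y)\vee d_{ur}(y,z)\le d_{ur}(x,z)$ for $x\le y\le z$, deduce order-convexity of the fibres and the arc structure of the quotient, then run the transfinite induction with the same successor and limit steps) coincides with the paper's. Where you genuinely diverge is in the proof of the central inequality. The paper works directly on the witnessing curve $\gamma\colon K\to M$: it truncates $K$ at $\min(\gamma^{-1}(y))$ when $y\in\gamma(K)$, and when $y\notin\gamma(K)$ it locates a complementary gap $(s,t)$ of $K$ with $y\in[\gamma(s),\gamma(t)]$ and redefines $\gamma$ at the single point $t$ to equal $y$, invoking the $1$-bounded turning condition to see that the modified map is still $1$-Lipschitz. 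You instead prove once and for all that the order truncation $r_{[x,z]}\colon w\mapsto (w\vee x)\wedge z$ is a $1$-Lipschitz monotone retraction onto the subarc $[x,z]$ (your six-case check is exactly the bounded turning inequalities, and it does go through), and then obtain both inequalities by post-composing $\gamma$ with $r_{[x,y]}$ and $r_{[y,z]}$, keeping the same compact set $K$ and hence the same value of $\lambda([\min K,\max K]\setminus K)$. This buys you a cleaner, case-free argument at the level of $d_{ur}$ and a reusable tool (the paper itself uses the analogous ``first point of contact'' retraction later, in the proof of Theorem \ref{thm:bttree} for trees); the paper's surgery argument is more local but requires the extra case analysis on whether $y$ lies in the image of $\gamma$. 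Your appeal to continuum theory for identifying the quotient as an arc is at the same level of detail as the paper's citation of \cite[Theorem 28.13]{Willard}, and your limit-ordinal step is the same infimum argument, so there is no gap.
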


\begin{proof}
Let $M$ be a 1-bounded turning arc.
For the first part, we begin by showing $$d_{ur}(x,y) \vee d_{ur}(y,z) \leq d_{ur}(x,z)$$
whenever $x \leq y \leq z \in M$. Let $x \leq y \leq z \in M$. Let $\eps > 0$, and let $K \subset \RR$ be compact and $\gamma: K \to M$ 1-Lipschitz with $\gamma(\min(K)) = x$, $\gamma(\max(K)) = z$, and
$$\lambda([\min(K),\max(K)] \setminus K) < d_{ur}(x,z) + \eps .$$
If $y \in \gamma(K)$, then $\tilde{K} := K \cap [\min(K),\min(\gamma^{-1}(y))]$ and $\tilde{\gamma} := \gamma \restrict_{\tilde{K}}$ witness $d_{ur}(x,y) < d_{ur}(x,z) + \eps$. If $y \notin \gamma(K)$, then there exist $s,t \in K$ such that
$$(s,t) \subset [\min(K),\max(K)] \setminus K$$
and $y \in [\gamma(s),\gamma(t)]$ (or $y \in [\gamma(t),\gamma(s)]$ if $\gamma(t)< \gamma(s)$; assume the former). Then we define $\tilde{K} := (K \cap [\min(K),s]) \cup \{t\}$ and $\tilde{\gamma}:\tilde{K}\to M$ by
$$
\tilde{\gamma}(r):=\begin{cases} y & \text{if }r=t \\ \gamma(r) & \text{otherwise.} \end{cases}
$$
Because of the 1-bounded turning property, $\tilde{\gamma}$ is 1-Lipschitz, and thus $\tilde{K},\tilde{\gamma}$ witness $d_{ur}(x,y) < d_{ur}(x,z) + \eps$. Since $\eps > 0$ was arbitrary, we have $d_{ur}(x,y) \leq d_{ur}(x,z)$ in all cases. The other inequality $d_{ur}(y,z) \leq d_{ur}(x,z)$ follows from the same argument. These inequalities imply that the order on $M_{ur}$ defined by $q(x) \leq q(y)$ if $x \leq y$ or $q(x)=q(y)$ is well-defined. Obviously $q$ is monotone and the metric topology on $M_{ur}$ is compact, connected, and agrees with the order topology. It is straightforward to use these facts (for example, with \cite[Theorem 28.13]{Willard}) to prove that $M_{ur}$ is a 1-bounded turning arc.

The proof of the second part is by transfinite induction. The base case is trivial because $q_0$ is the identity map. Let $\alpha > 0$ be an ordinal and suppose the proposition holds for all $\alpha' < \alpha$. The case where $\alpha$ is a successor follows immediately from the first part, so assume that $\alpha$ is a limit ordinal. Once we show
$$d_{ur}^{(\alpha)}(x,y) \vee d_{ur}^{(\alpha)}(y,z) \leq d_{ur}^{(\alpha)}(x,z)$$
whenever $x \leq y \leq z \in M$, the same argument from the first part implies $M_{ur}^{(\alpha)}$ is a 1-bounded turning arc and $q_\alpha$ is monotone. Let $x \leq y \leq z \in M$ and $\eps > 0$. Choose $\alpha' < \alpha$ large enough so that $d_{ur}^{(\alpha')}(x,z)< d_{ur}^{(\alpha)}(x,z) + \eps$. By the inductive hypothesis, $d_{ur}^{(\alpha')}(x,y) \vee d_{ur}^{(\alpha')}(y,z) \leq d_{ur}^{(\alpha')}(x,z)$. Together with the inequality from the previous sentence we get $d_{ur}^{(\alpha)}(x,y) \vee d_{ur}^{(\alpha)}(y,z) < d_{ur}^{(\alpha)}(x,z) + \eps$. Since $\eps > 0$ was arbitrary, the desired inequality follows.
\end{proof}

We shall now work our way towards Theorem \ref{ThmF} through a series of lemmas describing the relation between $\HH^1$-$\sigma$-finite sets in $M$ and $M_{ur}^{(\alpha)}$ in cases of increasing coverage.

\begin{lemma} \label{lem:inversediam}
Let $M$ be a 1-bounded turning arc. For every $r > 0$ and $\Sigma \subset M_{ur}$, if $\diam(\Sigma) < r$, then there exist subsets $S,A\subset M$ such that $q^{-1}(\Sigma) \subset S \cup A$, $\HH_r^1(A) < r$, and $\HH^1(S) < \infty$.
\end{lemma}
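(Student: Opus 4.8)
The plan is to produce $S$ and $A$ from a single near-optimal $1$-Lipschitz curve realizing the smallness of $d_{ur}$ between suitable endpoints: $S$ will be the image of that curve and $A$ the part of an enclosing subarc that the curve jumps over. Concretely, since $M_{ur}$ is a $1$-bounded turning arc (by the preceding proposition) and $\diam(\Sigma)<r$, there is a subarc $I=[\sigma_1,\sigma_2]\subseteq M_{ur}$ with $\Sigma\subseteq I$ and $\diam(I)=\diam(\Sigma)<r$. As $q\colon M\to M_{ur}$ is a monotone continuous surjection between arcs, $q^{-1}(I)$ is a subarc $[a,b]\subseteq M$ with $q(a)=\sigma_1$, $q(b)=\sigma_2$, and $q^{-1}(\Sigma)\subseteq[a,b]$. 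Then $d_{ur}(q(a),q(b))=d_{ur}(\sigma_1,\sigma_2)\le\diam(I)<r$, so by the definition of the pseudometric $d_{ur}$ on $M$ there exist a compact $K\subseteq\RR$ and a $1$-Lipschitz map $\gamma\colon K\to M$ with $\gamma(\min K)=a$, $\gamma(\max K)=b$, and $\lambda([\min K,\max K]\setminus K)<r$. Put $S:=\gamma(K)$ and $A:=[a,b]\setminus\gamma(K)$, so that $q^{-1}(\Sigma)\subseteq[a,b]\subseteq S\cup A$; and since $\gamma$ is $1$-Lipschitz, $\HH^1(S)\le\HH^1(K)=\lambda(K)<\infty$, which disposes of $S$.

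The substantial step is bounding $\HH^1_r(A)$, via the claim that every $p\in A$ lies on the subarc $E_{(s,t)}$ of $M$ joining $\gamma(s)$ to $\gamma(t)$ for some maximal gap $(s,t)$ of $K$ inside $[\min K,\max K]$. Indeed, if $p\in A$ then $a<p<b$ (in the order of $M$) and $p\notin\gamma(K)$; setting $s:=\sup\{u\in K:\gamma(u)\le p\}$ and $t:=\min\{u\in K:u>s\}$ and using compactness of $K$, continuity of $\gamma$, the total order on the arc $M$, and $p\notin\gamma(K)$, one checks that $s,t\in K$, $s<t$, $(s,t)\cap K=\emptyset$, and $\gamma(s)<p<\gamma(t)$, so that $p\in E_{(s,t)}$. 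Because $M$ is $1$-bounded turning and $\gamma$ is $1$-Lipschitz, $\diam E_{(s,t)}=d(\gamma(s),\gamma(t))\le t-s$. Enumerating the (at most countably many) gaps $(s_j,t_j)$ of $K$ in $[\min K,\max K]$, the sets $E_{(s_j,t_j)}$ cover $A$ and
$$\sum_j\diam E_{(s_j,t_j)}\le\sum_j(t_j-s_j)=\lambda([\min K,\max K]\setminus K)<r;$$
in particular each $\diam E_{(s_j,t_j)}<r$, so $\HH^1_r(A)\le\sum_j\diam E_{(s_j,t_j)}<r$, completing the proof.

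I expect the gap claim to be the only genuine obstacle: it is where both the total order of the arc $M$ and the $1$-bounded turning hypothesis really enter (the latter to control the diameter of the jumped-over subarc $E_{(s,t)}$), and it needs a little care with the supremum defining $s$ and with one-sided limits of $\gamma$. Everything else is routine bookkeeping with the definitions of $\HH^1$, $\HH^1_r$, $d_{ur}$, and the monotonicity of $q$.
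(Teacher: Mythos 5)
Your proof is correct and follows essentially the same route as the paper's: choose a near-optimal $1$-Lipschitz $\gamma\colon K\to M$ between (preimages of) the endpoints of an interval enclosing $\Sigma$, take $S=\gamma(K)$, and cover the rest by the subarcs of $M$ spanning the gaps of $K$, whose diameters sum to less than $r$ by the $1$-bounded turning property and the Lipschitz bound. The only difference is presentational: you take $A=[a,b]\setminus\gamma(K)$ and verify the covering claim in detail (correctly, and without needing $\gamma$ to be monotone), whereas the paper takes $A$ to be the union of the gap-arcs and asserts the covering.
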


\begin{proof}
Let $r > 0$ and $\Sigma \subset M_{ur}$ with $\diam(\Sigma) < r$. Since $(M_{ur},d_{ur}^{(1)})$ is a 1-bounded turning arc by Proposition \ref{prop:qmonotone}, there exists an interval $[b_0,b_1] \supset \Sigma$ with $d_{ur}^{(1)}(b_0,b_1) = \diam(\Sigma) < r$. Since $q$ is monotone, $q^{-1}([b_0,b_1]) = [c_0,c_1]$ for some $c_0,c_1 \in M$ with $q(c_0) = b_0$ and $q(c_1)=b_1$, and by definition of $(M_{ur},d_{ur}^{(1)})$ and $q$, it holds that $d_{ur}(c_0,c_1) = d_{ur}^{(1)}(b_0,b_1) < r$. Then it suffices to cover $[c_0,c_1]$ with sets $S,A \subset M$ satisfying $\HH_r^1(A) < r$ and $\HH^1(S) < \infty$.

By definition of $d_{ur}$, there exist a compact $K \subset \RR$ and a 1-Lipschitz map $\gamma: K \to M$ such that $\gamma(\min(K)) = c_0$, $\gamma(\max(K)) = c_1$, and $\lambda([\min(K),\max(K)] \setminus K) < r$. The set $[\min(K),\max(K)] \setminus K$ is a countable disjoint union of intervals $\{(x_i,y_i)\}_i$. The intervals $\{[\gamma(x_i),\gamma(y_i)]\}_i$ (understood to be $[\gamma(y_i),\gamma(x_i)]$ if $\gamma(y_i) \leq \gamma(x_i)$) together with $\gamma(K)$ cover $[c_0,c_1]$. Then $A := \bigcup_i [\gamma(x_i),\gamma(y_i)]$ and $S := \gamma(K)$ satisfy the required properties.
\end{proof}

\begin{lemma} \label{lem:inverseH}
Let $M$ be a 1-bounded turning arc. For every $r > 0$, $\delta \in (0,\infty]$, and $\Sigma \subset M_{ur}$, if $\HH_\delta^1(\Sigma) < r$, then there exist subsets $S,A \subset M$ such that $q^{-1}(\Sigma) \subset S \cup A$, $\HH_\delta^1(A) < r$, and $S$ is $\HH^1$-$\sigma$-finite.
\end{lemma}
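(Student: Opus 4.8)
The plan is to reduce the statement to the single-set case established in Lemma~\ref{lem:inversediam} by a covering argument, exploiting the $\sigma$-subadditivity of $\HH^1_\delta$ recorded in the preliminaries.

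First, using $\HH^1_\delta(\Sigma) < r$, I would fix a countable cover $\Sigma \subset \bigcup_i \Sigma_i$ with $\diam(\Sigma_i) < \delta$ for every $i$ and $\sum_i \diam(\Sigma_i) < r$ (discarding the empty pieces, so that each $\Sigma_i \neq \emptyset$). Since there is slack $\eta := r - \sum_i \diam(\Sigma_i) > 0$, I would choose $\eps_i > 0$ with $\sum_i \eps_i < \eta$ and, when $\delta < \infty$, also $\diam(\Sigma_i) + \eps_i \le \delta$ (this last condition being automatic when $\delta = \infty$); then set $r_i := \diam(\Sigma_i) + \eps_i$, so that $\diam(\Sigma_i) < r_i \le \delta$. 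Applying Lemma~\ref{lem:inversediam} to $\Sigma_i$ with parameter $r_i$ produces $S_i, A_i \subset M$ with $q^{-1}(\Sigma_i) \subset S_i \cup A_i$, $\HH^1_{r_i}(A_i) < r_i$, and $\HH^1(S_i) < \infty$. Finally I would take $S := \bigcup_i S_i$ and $A := \bigcup_i A_i$.

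The three required properties then follow quickly. For the inclusion, $q^{-1}(\Sigma) \subset \bigcup_i q^{-1}(\Sigma_i) \subset S \cup A$. The set $S$ is $\HH^1$-$\sigma$-finite, being a countable union of sets of finite $\HH^1$-measure. For $A$: since $r_i \le \delta$, any cover with mesh $< r_i$ is in particular a cover with mesh $< \delta$, so $\HH^1_\delta(A_i) \le \HH^1_{r_i}(A_i) < r_i$; by $\sigma$-subadditivity of $\HH^1_\delta$ we get $\HH^1_\delta(A) \le \sum_i \HH^1_\delta(A_i) < \sum_i r_i = \sum_i \diam(\Sigma_i) + \sum_i \eps_i < r$. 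I do not anticipate a genuine obstacle here; the only care needed is the simultaneous choice of the $\eps_i$ (keeping both the total content below $r$ and each threshold $r_i$ at most $\delta$), with the degenerate case $\delta = \infty$ treated separately, together with the elementary monotonicity $\HH^1_\delta \le \HH^1_{r_i}$ valid whenever $r_i \le \delta$.
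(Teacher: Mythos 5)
Your proof is correct and follows essentially the same route as the paper's: cover $\Sigma$ by countably many sets of diameter less than $\delta$ with total diameter sum below $r$, apply Lemma~\ref{lem:inversediam} to each piece, and take countable unions, using $\sigma$-subadditivity of $\HH^1_\delta$. If anything, you are slightly more careful than the paper in arranging $r_i \le \delta$ so that the monotonicity $\HH^1_\delta \le \HH^1_{r_i}$ applies.
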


\begin{proof}
Let $r > 0$, $\delta \in (0,\infty]$, and $\Sigma \subset M_{ur}$ with $\HH_\delta^1(\Sigma) < r$. Choose $\eps > 0$ such that $\HH_\delta^1(\Sigma) + \eps < r$. Cover $\Sigma$ with countably many sets $\{\Sigma_i\}_i$ such that $\diam(\Sigma_i) < \delta$ and $\sum_i (\diam(\Sigma_i) + 2^{-i}\eps) < r$. By Lemma \ref{lem:inversediam}, there exist, for each $i \geq 1$, subsets $S_i, A_i \subset M$ such that $q^{-1}(\Sigma_i) \subset S_i \cup A_i$, $\HH_\delta^1(A_i) < \diam(\Sigma_i) + 2^{-i}\eps$, and $\HH^1(S_i) < \infty$. Then $S := \bigcup_i S_i$ and $A := \bigcup_i A_i$ satisfy the required properties.
\end{proof}

\begin{lemma} \label{lem:inversesigmafinite}
Let $M$ be a 1-bounded turning arc. For every subset $\Sigma \subset M_{ur}$, if $\Sigma$ is $\HH^1$-$\sigma$-finite, then $q^{-1}(\Sigma) \subset M$ is $\HH^1$-$\sigma$-finite.
\end{lemma}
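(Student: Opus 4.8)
The plan is to reduce at once to the case $\HH^1(\Sigma)<\infty$ and then to apply Lemma~\ref{lem:inverseH} at finer and finer scales, extracting a single finite-measure error set by intersecting over all scales.

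First, since $\HH^1$-$\sigma$-finiteness is preserved under countable unions and $q^{-1}$ commutes with unions, writing $\Sigma$ as a countable union of subsets each of finite $\HH^1$-measure reduces the statement to the case $\HH^1(\Sigma)<\infty$. Set $r:=\HH^1(\Sigma)+1$, so that $\HH^1_{1/n}(\Sigma)\leq\HH^1(\Sigma)<r$ for every $n\in\NN$. Applying Lemma~\ref{lem:inverseH} with $\delta=1/n$ produces, for each $n$, subsets $S_n,A_n\subset M$ with $q^{-1}(\Sigma)\subset S_n\cup A_n$, with $S_n$ being $\HH^1$-$\sigma$-finite, and with $\HH^1_{1/n}(A_n)<r$.

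The key bookkeeping step is then to intersect the ``bad'' parts: put $S:=\bigcup_n S_n$ and $B:=\bigcap_n A_n$. The set $S$ is still $\HH^1$-$\sigma$-finite, and since $q^{-1}(\Sigma)\setminus S\subset q^{-1}(\Sigma)\setminus S_n\subset A_n$ for every $n$, we get $q^{-1}(\Sigma)\setminus S\subset B$, i.e. $q^{-1}(\Sigma)\subset S\cup B$. Finally, $B\subset A_n$ gives $\HH^1_{1/n}(B)\leq\HH^1_{1/n}(A_n)<r$ for every $n$, and letting $n\to\infty$ yields $\HH^1(B)=\lim_{n\to\infty}\HH^1_{1/n}(B)\leq r<\infty$, using that $\HH^1_\delta$ increases to $\HH^1$ as $\delta\to 0$. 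Thus $q^{-1}(\Sigma)$ lies in the union of an $\HH^1$-$\sigma$-finite set and a set of finite $\HH^1$-measure, hence is $\HH^1$-$\sigma$-finite.

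I expect no serious obstacle here: all of the geometric content (the use of the $1$-bounded turning property) has already been absorbed into Lemmas~\ref{lem:inversediam} and~\ref{lem:inverseH}, and the present lemma is a soft scale-by-scale argument on top of them. The only point requiring care is that a single application of Lemma~\ref{lem:inverseH} controls only $\HH^1_\delta$ of the error set, not $\HH^1$; that is why the finite-measure piece $B$ must be obtained by intersecting the error sets $A_n$ over all scales $\delta=1/n$ before passing to the limit.
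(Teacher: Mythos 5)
Your proof is correct and follows essentially the same route as the paper's: reduce to $\HH^1(\Sigma)<\infty$, apply Lemma~\ref{lem:inverseH} at scales $\delta=1/k$, take the union of the $\sigma$-finite pieces and the intersection of the error sets, and bound $\HH^1$ of that intersection via $\HH^1=\sup_k\HH^1_{1/k}$. The only difference is that you spell out the inclusion $q^{-1}(\Sigma)\subset S\cup B$ explicitly, which the paper leaves implicit.
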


\begin{proof}
It suffices to assume $\HH^1(\Sigma) < \infty$. Let $k \in \NN$ be arbitrary. By Lemma \ref{lem:inverseH}, there exist subsets $S_k,A_k \subset M$ such that $q^{-1}(\Sigma) \subset S_k \cup A_k$, $\HH_{1/k}^1(A_k) < \HH^1(\Sigma) + 1$, and $S_k$ is $\HH^1$-$\sigma$-finite. Set $S := \bigcup_{k=1}^\infty S_k$ and $A := \bigcap_{k=1}^\infty A_k$. Then $q^{-1}(\Sigma) \subset S \cup A$,
$$\HH^1(A) = \sup_k \HH_{1/k}^1(A) \leq \sup_k \HH_{1/k}^1(A_k) \leq \HH^1(\Sigma) + 1 < \infty,$$
and $S$ is $\HH^1$-$\sigma$-finite. This proves that $q^{-1}(\Sigma)$ is $\HH^1$-$\sigma$-finite.
\end{proof}

\begin{lemma} \label{lem:ordinal}
For any ordinal $\alpha$, 1-bounded turning arc $M$ and subsets $\Sigma,B \subset M_{ur}^{(\alpha)}$ such that $\Sigma$ is $\HH^1$-$\sigma$-finite, there exist subsets $S,A \subset M$ such that $q_\alpha^{-1}(\Sigma \cup B) \subset S \cup A$, $\HH_\infty^1(A) \leq  \HH_\infty^1(B)$ and $S$ is $\HH^1$-$\sigma$-finite.

In particular, $q_\alpha^{-1}(\Sigma)$ is $\HH^1$-$\sigma$-finite whenever $\Sigma$ is $\HH^1$-$\sigma$-finite.
\end{lemma}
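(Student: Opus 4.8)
The plan is to prove the statement by transfinite induction on $\alpha$, with $M$ fixed. The base case $\alpha=0$ is trivial since $q_0=\mathrm{id}_M$, so $S:=\Sigma$, $A:=B$ work. Before the inductive step I would isolate a ``sharp'' strengthening of Lemma~\ref{lem:inversediam}: for any $1$-bounded turning arc $N$ and any $B\subset N_{ur}$ there exist $S',A'\subset N$ with $q^{-1}(B)\subset S'\cup A'$, $S'$ being $\HH^1$-$\sigma$-finite, and $\HH^1_\infty(A')\le\HH^1_\infty(B)$ (not merely $\le\HH^1_\infty(B)+\eps$). This follows by covering $B$ with sets $B_i$ so that $\sum_i\diam(B_i)<\HH^1_\infty(B)+\eps$, applying Lemma~\ref{lem:inversediam} to each $B_i$ with radius $r_i:=\diam(B_i)+2^{-i}\eps$ (yielding $\HH^1_\infty(A_i)\le\HH^1_{r_i}(A_i)<r_i$), taking unions, repeating for a sequence $\eps=\eps_n\to 0$, and setting $A':=\bigcap_n A'_n$, $S':=\bigcup_n S'_n$; the bounds $\HH^1_\infty(A'_n)<\HH^1_\infty(B)+2\eps_n$ collapse to $\HH^1_\infty(A')\le\HH^1_\infty(B)$, while $\sigma$-finiteness survives countable unions. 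This ``cover / estimate piecewise / intersect along $\eps_n\to0$'' scheme is the engine of the whole proof.

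For the successor step $\alpha=\beta+1$, write $q_\alpha=q\circ q_\beta$ with $q\colon M_{ur}^{(\beta)}\to(M_{ur}^{(\beta)})_{ur}$ the canonical surjection and $M_{ur}^{(\beta)}$ again a $1$-bounded turning arc. Given $\Sigma,B\subset M_{ur}^{(\alpha)}$ with $\Sigma$ $\HH^1$-$\sigma$-finite, Lemma~\ref{lem:inversesigmafinite} shows $q^{-1}(\Sigma)\subset M_{ur}^{(\beta)}$ is $\HH^1$-$\sigma$-finite, and the sharp version above gives $q^{-1}(B)\subset S'\cup A'$ with $S'$ $\HH^1$-$\sigma$-finite and $\HH^1_\infty(A')\le\HH^1_\infty(B)$. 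Since $q_\alpha^{-1}(\Sigma\cup B)\subset q_\beta^{-1}\bigl((q^{-1}(\Sigma)\cup S')\cup A'\bigr)$, applying the inductive hypothesis at $\beta$ with first set $q^{-1}(\Sigma)\cup S'$ (which is $\HH^1$-$\sigma$-finite) and second set $A'$ produces $S,A\subset M$ with $S$ being $\HH^1$-$\sigma$-finite and $\HH^1_\infty(A)\le\HH^1_\infty(A')\le\HH^1_\infty(B)$, as required.

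The limit step is the heart of the matter. Let $\alpha$ be a limit ordinal. The geometric input is: if $I$ is a closed subinterval of the (again $1$-bounded turning) arc $M_{ur}^{(\alpha)}$, then $q_\alpha^{-1}(I)$ is a subarc $[\hat b_0,\hat b_1]$ of $M$ whose endpoints map onto those of $I$, whence $\diam(I)=d_{ur}^{(\alpha)}(q_\alpha(\hat b_0),q_\alpha(\hat b_1))=\inf_{\alpha'<\alpha}d_{ur}^{(\alpha')}(q_{\alpha'}(\hat b_0),q_{\alpha'}(\hat b_1))$; so for any $\theta>\diam(I)$ there is $\alpha'<\alpha$ with $I':=q_{\alpha'}([\hat b_0,\hat b_1])$ a subinterval of $M_{ur}^{(\alpha')}$ of diameter $<\theta$ and $q_\alpha^{-1}(I)\subset q_{\alpha'}^{-1}(I')$. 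Feeding $I'$ (with empty first set) into the inductive hypothesis at $\alpha'$ gives $q_\alpha^{-1}(I)\subset S_I\cup A_I$ with $S_I$ $\HH^1$-$\sigma$-finite and $\HH^1_\infty(A_I)\le\HH^1_\infty(I')\le\diam(I')<\theta$. With this single-interval statement I would run the cover/estimate/intersect scheme twice. For $B$: cover by intervals $I_i$ with $\sum_i\diam(I_i)<\HH^1_\infty(B)+\eps$, use $\theta_i:=\diam(I_i)+2^{-i}\eps$, sum, and intersect over $\eps_n\to0$, obtaining $q_\alpha^{-1}(B)\subset S_B\cup A$ with $S_B$ $\HH^1$-$\sigma$-finite and $\HH^1_\infty(A)\le\HH^1_\infty(B)$. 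For $\Sigma$ (we may assume $\HH^1(\Sigma)<\infty$): for each $m$ cover $\Sigma$ by intervals $I_{m,j}$ of diameter $<1/m$ with $\sum_j\diam(I_{m,j})<\HH^1(\Sigma)+1$, and choose the corresponding $\alpha'$'s large enough that each induced interval $I'_{m,j}$ still has diameter $<1/m$ and $\sum_j\diam(I'_{m,j})<\HH^1(\Sigma)+2$; this gives $q_\alpha^{-1}(\Sigma)\subset S^{(m)}\cup A^{(m)}$ with $S^{(m)}$ $\HH^1$-$\sigma$-finite. Then set $S_\Sigma:=\bigcup_m S^{(m)}$, $A_\Sigma:=\bigcap_m A^{(m)}$, and finally $S:=S_\Sigma\cup A_\Sigma\cup S_B$ together with the set $A$ from the $B$-part.

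The point I expect to be the main obstacle, and the reason the $\Sigma$-case above is organized with varying mesh $1/m$, is that pulling a small-diameter subset of $M_{ur}^{(\alpha)}$ back to $M$ yields a subarc of $M$ that may have large diameter, so the inductive hypothesis only supplies sets $A_I$ with small \emph{content} $\HH^1_\infty$ and no control on $\HH^1_\delta$. The fix is the elementary fact that $\HH^1_\delta(E)=\HH^1_\infty(E)$ whenever $\HH^1_\infty(E)<\delta$ (a cover witnessing content $<\delta$ automatically uses sets of diameter $<\delta$): with $\delta=1/m$ it gives $\HH^1_{1/m}(A^{(m)})\le\sum_j\HH^1_\infty(A_{I_{m,j}})<\HH^1(\Sigma)+2$, hence $\HH^1(A_\Sigma)=\sup_m\HH^1_{1/m}(A_\Sigma)\le\HH^1(\Sigma)+2<\infty$. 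So $q_\alpha^{-1}(\Sigma)\subset S_\Sigma\cup A_\Sigma$ is genuinely $\HH^1$-$\sigma$-finite, which proves the ``in particular'' clause ($B=\emptyset$) and lets $A_\Sigma$ be absorbed into $S$ in the general statement.
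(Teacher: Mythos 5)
Your proposal is correct and follows essentially the same route as the paper's proof: transfinite induction in which the limit step covers $\Sigma$ and $B$ by intervals, uses the limit definition of $d_{ur}^{(\alpha)}$ together with the 1-bounded turning property to drop to some $\alpha'<\alpha$ where the interval preimages have almost the same diameter, applies the inductive hypothesis there, and then intersects over a shrinking mesh, using the fact that $\HH^1_\delta=\HH^1_\infty$ on sets of content less than $\delta$ to recover finite $\HH^1$-measure for the $\Sigma$-part and the sharp content bound for the $B$-part. The only differences are organizational (you treat $\Sigma$ and $B$ separately and make explicit the $\eps_n\to 0$ intersection argument that the paper leaves implicit in the successor step).
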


\begin{proof}
The proof is by transfinite induction. The base case is tautological since $q_0$ is the identity map. Let $\alpha$ be an ordinal and suppose the lemma holds for all $\alpha' < \alpha$. The case where $\alpha$ is a successor follows immediately from the inductive hypothesis and Lemmas \ref{lem:inverseH} and \ref{lem:inversesigmafinite}, so assume $\alpha$ is a limit ordinal. Let $M$ be a 1-bounded turning arc, $r>0$, and $\Sigma,B \subset M_{ur}^{(\alpha)}$ such that $\Sigma$ is $\HH^1$-$\sigma$-finite. As in Lemma \ref{lem:inversesigmafinite}, it suffices to assume $\HH^1(\Sigma) < \infty$. Let $k \in \NN$ be arbitrary. Cover $B$ with countably many intervals $\{I^k_j\}_{j=1}^\infty$ such that $\sum_j \diam(I^k_j) \leq \HH_\infty^1(B) + 2^{-k}$, and similarly cover $\Sigma$ with countably many intervals $\{J^k_j\}_{j=1}^\infty$ such that $\diam(J^k_j) \leq 2^{-k}$ and $\sum_j \diam(J^k_j) \leq \HH^1(\Sigma) + 1$. Fix $j \in \NN$. Choose $\alpha_j < \alpha$ large enough so that
\begin{align*}
\diam((q_\alpha^{\alpha_j})^{-1}(I^k_j)) &\leq \diam(I^k_j) + 2^{-j-k}, \\
\diam((q_\alpha^{\alpha_j})^{-1}(J^k_j)) &\leq \diam(J^k_j) + 2^{-j-k},
\end{align*}
where $q^{\alpha_j}_{\alpha}: M_{ur}^{(\alpha_j)} \to M_{ur}^{(\alpha)}$ denotes the canonical 1-Lipschitz map. This choice is possible because, by the 1-bounded turning condition, the diameter of any interval is determined by only one distance (the distance between the endpoints) and not infinitely many distances.

By the inductive hypothesis,
\begin{align*}
q_\alpha^{-1}(I^k_j) &= q_{\alpha_j}^{-1}((q_\alpha^{\alpha_j})^{-1}(I^k_j)) \subset C^k_j \cup D^k_j \\
q_\alpha^{-1}(J^k_j) &= q_{\alpha_j}^{-1}((q_\alpha^{\alpha_j})^{-1}(J^k_j)) \subset E^k_j \cup F^k_j
\end{align*}
where
\begin{align*}
\HH_\infty^1(C^k_j) &\leq \diam(I^k_j) + 2^{-j-k} \\
\HH_\infty^1(E^k_j) &\leq \diam(J^k_j) + 2^{-j-k} \leq 2^{-k+1}
\end{align*}
and $D^k_j$, $F^k_j$ are $\HH^1$-$\sigma$-finite. Note that this implies $\HH_{2^{-k+1}}^1(E^k_j) \leq \diam(J^k_j) + 2^{-j-k}$. Set $C^k := \bigcup_j C_j^k$, $D^k := \bigcup_j D_j^k$, $E^k := \bigcup_j E_j^k$, and $F^k := \bigcup_j F_j^k$, so that
\begin{itemize}[itemsep=3pt]
\item $q_\alpha^{-1}(B) \subset C^k \cup D^k$ and $q_\alpha^{-1}(\Sigma) \subset E^k \cup F^k$,
\item $\HH_{\infty}^1(C^k) \leq \sum_j \pare{\diam(I^k_j) + 2^{-j-k}} \leq \HH_\infty^1(B) + 2^{-k+1}$,
\item $\HH_{2^{-k+1}}^1(E^k) \leq \sum_j \pare{\diam(J^k_j) + 2^{-j-k}} \leq \HH^1(\Sigma) + 2 < \infty$, and
\item $D^k$ and $F^k$ are $\HH^1$-$\sigma$-finite.
\end{itemize}
Set $C := \bigcap_k C^k$, $D := \bigcup_k D^k$, $E := \bigcap_k E^k$, and $F := \bigcup_k F^k$ so that
\begin{itemize}[itemsep=3pt]
\item $q_{\alpha}^{-1}(\Sigma \cup B) \subset C \cup D \cup E \cup F$,
\item $\HH_\infty^1(C) \leq \inf_k \HH_\infty^1(C^k) \leq \HH_\infty^1(B)$,
\item $\HH^1(E) = \sup_k \HH_{2^{-k+1}}^1(E) \leq \sup_k \HH_{2^{-k+1}}^1(E^k) \leq \HH^1(\Sigma) + 2 < \infty$, and
\item $D$ and $F$ are $\HH^1$-$\sigma$-finite.
\end{itemize}
Then $A := C$ and $S := D \cup E \cup F$ satisfy the required properties. This finishes the inductive step and the proof.
\end{proof}

The previous lemma is already enough to prove Theorem \ref{ThmF} in the particular case of 1-bounded turning arcs.

\begin{theorem} \label{thm:btarc}
Let $(M,d)$ be a 1-bounded turning arc with endpoints $x,y$. Then $$d_{\LL}(\pi(x),\pi(y)) = d_{ur}^{(\omega_1)}(q_{\omega_1}(x),q_{\omega_1}(y)) = \inf\{\HH^1_\infty(A): M \setminus A \text{ is } \HH^1\text{-}\sigma\text{-finite}\}.$$
\end{theorem}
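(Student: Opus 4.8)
The plan is to verify the chain
$d_{\LL}(\pi(x),\pi(y)) = d_{ur}^{(\omega_1)}(q_{\omega_1}(x),q_{\omega_1}(y)) \le \inf\{\HH^1_\infty(A): M\setminus A \text{ is }\HH^1\text{-}\sigma\text{-finite}\} \le d_{ur}^{(\omega_1)}(q_{\omega_1}(x),q_{\omega_1}(y))$,
which of course forces equality throughout. The first equality is immediate from Theorem~\ref{thm:ML=Mur}: the map $\tilde{\pi}\colon M_{ur}^{(\omega_1)}\to M_{\LL}$ is an isometry and satisfies $\tilde{\pi}\circ q_{\omega_1} = \pi$, so evaluating $\tilde{\pi}$ at the pair $q_{\omega_1}(x),q_{\omega_1}(y)$ gives $d_{\LL}(\pi(x),\pi(y)) = d_{ur}^{(\omega_1)}(q_{\omega_1}(x),q_{\omega_1}(y))$.

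For the middle inequality I would fix $f\in\ball{\lip(M)}$ and a set $A\subseteq M$ with $M\setminus A$ being $\HH^1$-$\sigma$-finite, and show $|f(y)-f(x)| \le \HH^1_\infty(A)$. Since $M$ is connected, $f(M)$ is an interval containing $f(x)$ and $f(y)$, so $|f(y)-f(x)| \le \lambda^*(f(M)) \le \lambda^*(f(A)) + \lambda^*(f(M\setminus A))$, where $\lambda^*$ is Lebesgue outer measure. The restriction $f\restrict_{M\setminus A}$ is a locally flat function on the $\HH^1$-$\sigma$-finite metric space $M\setminus A$, so its image is Lebesgue-null by the classical fact recalled just before this theorem (cf.~\cite{Choquet_1944}); hence $\lambda^*(f(M\setminus A)) = 0$. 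For the remaining term I would use that on subsets of $\RR$ Lebesgue outer measure and Hausdorff $1$-content coincide (an arbitrary cover may be replaced by a cover by intervals of the same total diameter) and that the $1$-Lipschitz map $f$ does not increase $1$-content, so $\lambda^*(f(A)) = \HH^1_\infty(f(A)) \le \HH^1_\infty(A)$. Taking the infimum over admissible $A$ and then the supremum over $f$ yields the middle inequality, since $d_{\LL}(\pi(x),\pi(y)) = \sup_{f\in\ball{\lip(M)}}|f(y)-f(x)|$.

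For the last inequality I would apply Lemma~\ref{lem:ordinal} with $\alpha=\omega_1$, with $\Sigma=\emptyset$ (trivially $\HH^1$-$\sigma$-finite) and with $B = M_{ur}^{(\omega_1)}$. Since $q_{\omega_1}$ is onto, $q_{\omega_1}^{-1}(\Sigma\cup B)=M$, so the lemma produces $S,A\subseteq M$ with $M = S\cup A$, $S$ being $\HH^1$-$\sigma$-finite, and $\HH^1_\infty(A)\le\HH^1_\infty(M_{ur}^{(\omega_1)})$. Now $M_{ur}^{(\omega_1)}$ is a compact connected $1$-bounded turning arc and, since $q_{\omega_1}$ is monotone, its endpoints are $q_{\omega_1}(x)$ and $q_{\omega_1}(y)$; therefore $\HH^1_\infty(M_{ur}^{(\omega_1)})$ equals its diameter — for a connected space, covering the $1$-Lipschitz image under $z\mapsto d(z,q_{\omega_1}(x))$ gives $\HH^1_\infty\ge\diam$, and the one-set cover gives $\le$ — which in turn equals $d_{ur}^{(\omega_1)}(q_{\omega_1}(x),q_{\omega_1}(y))$ because the diameter of a $1$-bounded turning arc is attained between its endpoints. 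Since $M\setminus A\subseteq S$ is $\HH^1$-$\sigma$-finite, $A$ is admissible in the infimum, so $\inf\{\HH^1_\infty(A):\ldots\}\le\HH^1_\infty(A)\le d_{ur}^{(\omega_1)}(q_{\omega_1}(x),q_{\omega_1}(y))$, closing the chain. I do not expect a serious obstacle here: the real content has been absorbed into Lemma~\ref{lem:ordinal} and Theorem~\ref{thm:ML=Mur}, and the only point demanding care is the bookkeeping that identifies $\HH^1_\infty(M_{ur}^{(\omega_1)})$ with the $d_{ur}^{(\omega_1)}$-distance between the endpoints of the arc $M_{ur}^{(\omega_1)}$.
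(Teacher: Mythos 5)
Your proposal is correct and follows essentially the same route as the paper: the first equality from Theorem~\ref{thm:ML=Mur}, the upper bound on $d_\LL$ via connectedness, the $1$-Lipschitz bound $\HH^1_\infty(f(A))\le\HH^1_\infty(A)$, and the Choquet-type nullity of $f(M\setminus A)$, and the remaining inequality from Lemma~\ref{lem:ordinal} with $\Sigma=\emptyset$ and $B=M_{ur}^{(\omega_1)}$ together with $\HH^1_\infty(M_{ur}^{(\omega_1)})\le\diam(M_{ur}^{(\omega_1)})=d_{ur}^{(\omega_1)}(q_{\omega_1}(x),q_{\omega_1}(y))$. The only cosmetic difference is your use of Lebesgue outer measure in place of $\HH^1_\infty$ on subsets of $\RR$, which you correctly observe coincide.
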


\begin{proof}
The first equality follows from Theorem~\ref{thm:ML=Mur}. Next, observe that applying Lemma \ref{lem:ordinal} with $\alpha = \omega_1$, $\Sigma = \emptyset$, and $B = M_{ur}^{(\omega_1)}$ gives us
\begin{align*}
\inf\{\HH^1_\infty(A): M \setminus A \text{ is } \HH^1\text{-}\sigma\text{-finite}\} \leq \HH^1_\infty(M_{ur}^{(\omega_1)}) &\leq \diam(M_{ur}^{(\omega_1)}) \\
&= d_{ur}^{(\omega_1)}(q_{\omega_1}(x),q_{\omega_1}(y)) .
\end{align*}
Finally, we show the reverse inequality. Let $f \in \ball{\lip(M)}$ and $A \subset M$ such that $M \setminus A$ is $\HH^1$-$\sigma$-finite. Since $M$ is connected, $|f(y)-f(x)| \leq \diam(f(M)) = \HH^1_\infty(f(M))$. Then
\begin{align*}
|f(y)-f(x)| \leq \HH^1_\infty(f(M)) &\leq \HH^1_\infty(f(A)) + \HH^1_\infty(f(M \setminus A)) \\
&\leq \HH^1_\infty(A) + \HH^1_\infty(f(M \setminus A)) = \HH^1_\infty(A)
\end{align*}
where the last inequality follows from the fact that $f$ is 1-Lipschitz and the last equality follows from the fact that $f$ is locally flat and $M \setminus A$ is $\HH^1$-$\sigma$-finite. Since $A$ and $f$ were arbitrary we get
\begin{align*}
d_{\LL}(\pi(x),\pi(y)) &= \sup\,\{ |f(y)-f(x)| \,:\, f \in \ball{\lip(M)} \} \\
&\leq \inf\,\{\HH^1_\infty(A): M \setminus A \text{ is } \HH^1\text{-}\sigma\text{-finite}\} . \qedhere
\end{align*}
\end{proof}

It is now straightforward to extend Theorem \ref{thm:btarc} to the more general class of 1-bounded turning trees, defined below.

\begin{definition} \label{def:bttree}
A compact metric space $M$ is a \textit{1-bounded turning tree} if every pair of points $x,y \in M$ are joined by a unique arc in $M$, and this arc is 1-bounded turning. We will denote that arc by $[x,y]$. A metric space is a \textit{bounded turning tree} if it is bi-Lipschitz equivalent to a 1-bounded turning tree.
\end{definition}

\begin{example}
A class of metric spaces called \textit{quasiconformal trees} was recently studied by Bonk and Meyer \cite{BM_2020} in the context of quasisymmetric uniformization (see also \cite{Kinneberg} and \cite{DV_2020}). Quasiconformal trees are precisely those bounded turning trees $T$ that are \textit{doubling} - meaning there exists $N \in \NN$ such that for any $r > 0$ and $x \in T$, there are $x_1, \dots x_N \in T$ with $B_r(x) \subset \bigcup_{i=1}^N B_{r/2}(x_i)$. The most well-known examples of quasiconformal trees are Julia sets of polynomials. See \cite{BM_2020} for details and more information.
\end{example}

\begin{theorem} \label{thm:bttree}
Let $(M,d)$ be a 1-bounded turning tree. Then for all $x,y \in M$,
$$d_{\LL}(\pi(x),\pi(y)) = d_{ur}^{(\omega_1)}(q_{\omega_1}(x),q_{\omega_1}(y)) = \inf\{\HH^1_\infty(A): [x,y] \setminus A \text{ is } \HH^1\text{-}\sigma\text{-finite}\}.$$
In particular, a bounded turning tree fails to be 1-critical if and only if each of its subarcs is $\HH^1$-$\sigma$-finite.
\end{theorem}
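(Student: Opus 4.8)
The plan is to deduce Theorem~\ref{thm:bttree} from its already established one-dimensional case, Theorem~\ref{thm:btarc}, applied to the subarc $[x,y]$. The first equality in the statement, $d_{\LL}(\pi(x),\pi(y)) = d_{ur}^{(\omega_1)}(q_{\omega_1}(x),q_{\omega_1}(y))$, is immediate from Theorem~\ref{thm:ML=Mur}, since a $1$-bounded turning tree is compact by definition; so the work is to prove that this common value equals $\inf\{\HH^1_\infty(A):[x,y]\setminus A\text{ is }\HH^1\text{-}\sigma\text{-finite}\}$, and then the ``in particular'' clause is a short consequence. Fix $x\neq y$ (the case $x=y$ being trivial), write $J=[x,y]$ for the subarc, which is a $1$-bounded turning arc with endpoints $x,y$ carrying the metric inherited from $M$, and abbreviate $I_{x,y}:=\inf\{\HH^1_\infty(A):J\setminus A\text{ is }\HH^1\text{-}\sigma\text{-finite}\}$.

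One inequality is easy: restriction $f\mapsto f\restrict_J$ maps $\ball{\lip(M)}$ into $\ball{\lip(J)}$, so
$$d_{\LL}(\pi(x),\pi(y))=\sup_{f\in\ball{\lip(M)}}|f(y)-f(x)|\leq\sup_{g\in\ball{\lip(J)}}|g(y)-g(x)|=I_{x,y},$$
where the last equality is Theorem~\ref{thm:btarc} applied to the arc $J$.

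For the reverse inequality I would construct a $1$-Lipschitz retraction $r\colon M\to J$. For $z\in M$ let $r(z)$ be the ``gate'' of $z$ onto $J$, i.e.\ the unique point of $J$ such that $[z,w]\supseteq[z,r(z)]$ for every $w\in J$ (equivalently, $[z,x]\cap J=[x,r(z)]$ is a subarc of $J$); this is well defined by the tree structure, and $r\restrict_J=\mathrm{id}_J$. The key claim is that $r$ is $1$-Lipschitz: given $z,z'$ with $r(z)\neq r(z')$, the gate property yields the arc concatenation $[z,z']=[z,r(z)]\cup[r(z),r(z')]\cup[r(z'),z']$, so $[r(z),r(z')]$ is a subarc of $[z,z']$; since $M$ is a $1$-bounded turning tree, the arc $[z,z']$ is itself $1$-bounded turning, whence $d(r(z),r(z'))=\diam([r(z),r(z')])\leq\diam([z,z'])=d(z,z')$. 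Now, given $\delta>0$, Theorem~\ref{thm:btarc} supplies $g\in\ball{\lip(J)}$ with $|g(y)-g(x)|>I_{x,y}-\delta$; then $g\circ r\in\ball{\lip(M)}$ (composition of a Lipschitz map with a locally flat function is locally flat, and both factors are $1$-Lipschitz) and $(g\circ r)(x)=g(x)$, $(g\circ r)(y)=g(y)$, so $d_{\LL}(\pi(x),\pi(y))\geq|g(y)-g(x)|>I_{x,y}-\delta$. Letting $\delta\to0$ finishes the displayed formula, and the first equality then gives $d_{ur}^{(\omega_1)}(q_{\omega_1}(x),q_{\omega_1}(y))=I_{x,y}$ as well.

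Finally, for the ``in particular'' clause, observe that $I_{x,y}=0$ if and only if $J$ is $\HH^1$-$\sigma$-finite: if it is, take $A=\emptyset$; conversely, if $I_{x,y}=0$ choose $A_n$ with $J\setminus A_n$ being $\HH^1$-$\sigma$-finite and $\HH^1_\infty(A_n)\to0$, and put $A=\bigcap_n A_n$, so that $\HH^1_\infty(A)=0$ forces $\HH^1(A)=0$ while $J\setminus A=\bigcup_n(J\setminus A_n)$ is $\HH^1$-$\sigma$-finite, hence so is $J$. Thus $M$ fails to be $1$-critical $\iff$ $M_{\LL}$ is a single point $\iff$ $d_{\LL}\equiv0$ on $M$ $\iff$ $I_{x,y}=0$ for all $x,y$ $\iff$ every subarc of $M$ is $\HH^1$-$\sigma$-finite; for a general bounded turning tree this follows because both ``$1$-critical'' and ``every subarc is $\HH^1$-$\sigma$-finite'' are invariant under bi-Lipschitz homeomorphisms. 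The main obstacle, and the only place where the hypothesis is genuinely used, is the $1$-Lipschitz property of $r$: one must carry out the (elementary but slightly fiddly) tree combinatorics yielding the decomposition $[z,z']=[z,r(z)]\cup[r(z),r(z')]\cup[r(z'),z']$, and crucially use that in a $1$-bounded turning tree \emph{every} arc, not merely a distinguished family, is $1$-bounded turning.
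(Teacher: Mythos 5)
Your proof is correct and follows essentially the same route as the paper: both reduce to Theorem~\ref{thm:btarc} via the $1$-Lipschitz ``first point of contact'' (gate) retraction $M\to[x,y]$, whose composition with locally flat functions on the arc shows the restriction map $\ball{\lip(M)}\to\ball{\lip([x,y])}$ is onto. Your derivation of the easy inequality by restriction plus Theorem~\ref{thm:btarc} (rather than re-running the Hausdorff-content estimate) and your more explicit justification that the gate map is $1$-Lipschitz are only cosmetic differences.
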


\begin{proof}
Let $x,y \in M$. The first equality is again Theorem \ref{thm:ML=Mur}. The proof of the inequality
$$d_{\LL}(\pi(x),\pi(y)) \leq \inf\{\HH^1_\infty(A): [x,y] \setminus A \text{ is } \HH^1\text{-}\sigma\text{-finite}\}$$
is just the same as the proof of the analogous inequality from Theorem \ref{thm:btarc}. It remains to show the reverse inequality.
To that end, it is enough to check that $[x,y]$ is a 1-Lipschitz retract of $M$. Indeed, this implies that the restriction map $\ball{\lip(M)} \to \ball{\lip([x,y])}$ is a surjection and, together with Theorem \ref{thm:btarc}, this proves
$$d_{\LL}(\pi(x),\pi(y)) \geq \inf\{\HH^1_\infty(A): [x,y] \setminus A \text{ is } \HH^1\text{-}\sigma\text{-finite}\}.$$

Fix an order for $[x,y]$ with $x\leq y$, and define a mapping $g:M\to M$ by
$$ g(z)=\sup([x,y]\cap [x,z]) . $$
Clearly $g$ is a retraction onto $[x,y]$. We claim that
$$
[x,y]\cap [g(z),z] = \set{g(z)}
$$
for each $z\in M$: suppose $w\in [x,y]\cap [g(z),z]$, then $w\in [g(z),z]\subset [x,z]$ and thus $w\leq g(z)$ by the definition of $g(z)$. On the other hand, $g(z)\in [x,z]$ and $w\in [g(z),z]$ clearly imply $g(z)\in [x,w]$ and hence $w\geq g(z)$, proving our claim. It now follows that, for $w,z\in M$, we have either $g(w)=g(z)$ or $[g(w),g(z)]= [w,z]\cap [x,y]$, and in the latter case
$$
d(g(w),g(z)) = \diam([g(w),g(z)]) \leq \diam([w,z]) = d(w,z)
$$
i.e. $g$ is 1-Lipschitz. This finishes the proof of the first part.

For the second part, observe that the property of being 1-critical and the property of being $\HH^1$-$\sigma$-finite are each preserved under bi-Lipschitz maps, and thus it suffices to prove the equivalence when $M$ is a 1-bounded turning tree. If every arc $[x,y] \subset M$ is $\HH^1$-$\sigma$-finite, then we may take $A = \emptyset$ in the first part and get $d_\LL(\pi(x),\pi(y)) = 0$ for every $x,y \in M$, meaning $M$ is not 1-critical. Conversely, if $M$ is not 1-critical, then $d_\LL(\pi(x),\pi(y)) = 0$ for every $x,y \in M$, and so by the first part we may find a sequence $\{A_n\}_{n=1}^\infty$ of subsets of $[x,y]$ such that $[x,y] \setminus A_n$ is $\HH^1$-$\sigma$-finite and $\HH_\infty^1(A_n) \leq \frac{1}{n}$ for every $n \in \NN$. This implies $\HH_\infty^1(\bigcap_{n=1}^\infty A_n) = 0$, and so
$$[x,y] = \pare{\bigcup_{n=1}^\infty [x,y] \setminus A_n} \cup \pare{\bigcap_{n=1}^\infty A_n}$$
is $\HH^1$-$\sigma$-finite.
\end{proof}

\subsection{Curve-flat Lipschitz functions}
In this final subsection, we define curve-flat Lipschitz functions and sketch examples of bounded turning arcs $M$ whose index $\alpha_M$ from Proposition \ref{prop:stabilize} may be arbitrarily large.

\begin{definition}[Curve-Flat Lipschitz Functions]
\label{def:curveflat}
Let $(M,d)$ be a metric space. A Lipschitz function $f: M \to \RR$ is \textit{curve-flat} if for any compact $K \subset \RR$ and Lipschitz $\gamma: K \to M$, the metric differential of the composite $f \circ \gamma$ vanishes at $\lambda$-almost every point in $K$, i.e.
$$\lim_{\substack{y \to x \\ y\in K}}\frac{|f(\gamma(x))-f(\gamma(y))|}{|x-y|} = 0$$
for $\lambda$-almost every $x \in K$.
By the area formula (Theorem \ref{thm:areaformula}), this is equivalent to $\lambda(f(\gamma(K))) = 0$ for every $K \subset \RR$ compact and $\gamma: K \to M$ Lipschitz.
Let $\lip_\Gamma(M)$ denote the set of all curve-flat Lipschitz functions on $M$ and $\ball{\lip_\Gamma(M)}$ the set of curve-flat 1-Lipschitz functions. Define a pseudometric $d_\Gamma$ on $M$ by
$$d_\Gamma(x,y) := \sup\set{|f(y)-f(x)|: f \in \ball{\lip_\Gamma(M)}}.$$
\end{definition}

Curve-flat Lipschitz functions are intimately connected to the pseudometric $d_{ur}$ by the following proposition.

\begin{proposition} \label{prop:dur=dG}
For any metric space $(M,d)$ and $f \in \ball{\Lip(M_{ur})}$, $f \circ q \in \ball{\lip_\Gamma(M)}$. Consequently, $d_{ur} = d_\Gamma$.
\end{proposition}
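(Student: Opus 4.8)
The plan is to establish the first assertion and then deduce $d_{ur}=d_\Gamma$ from it together with one short direct computation. That $f\circ q$ is $1$-Lipschitz is immediate, since $f$ and $q:M\to M_{ur}$ are both $1$-Lipschitz; the real content is curve-flatness of $f\circ q$. The key point is an upper estimate for $d_{ur}$ \emph{along a curve}: if $\gamma:K\to M$ is $L$-Lipschitz with $K\subset\RR$ compact and $s<t$ both lie in $K$, then $s=\min(K\cap[s,t])$ and $t=\max(K\cap[s,t])$, so the restriction $\gamma\restrict_{K\cap[s,t]}$, rescaled by $L$ to become $1$-Lipschitz, is an admissible competitor in the definition of $d_{ur}$ joining $\gamma(s)$ to $\gamma(t)$, and therefore $d_{ur}(\gamma(s),\gamma(t))\le L\,\lambda([s,t]\setminus K)$.

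Granting this, I would invoke Lebesgue's density theorem: for $\lambda$-a.e.\ $x\in K$, the point $x$ is a two-sided density point of $K$, hence $\lambda([x,y]\setminus K)/|x-y|\to 0$ as $y\to x$ within $K$ (from either side). For such an $x$ and any $y\in K$ near $x$,
$$\frac{|f(q(\gamma(y)))-f(q(\gamma(x)))|}{|x-y|}\le\frac{d_{ur}(\gamma(y),\gamma(x))}{|x-y|}\le L\,\frac{\lambda([x,y]\setminus K)}{|x-y|}\longrightarrow 0,$$
where the first inequality uses that $f$ is $1$-Lipschitz on $M_{ur}$ and that $d_{ur}(\gamma(y),\gamma(x))$ is exactly the distance between $q(\gamma(y))$ and $q(\gamma(x))$ in $M_{ur}$. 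Thus $f\circ q\circ\gamma$ is flat at $\lambda$-a.e.\ point of $K$; as $K$ and $\gamma$ were arbitrary, $f\circ q\in\ball{\lip_\Gamma(M)}$.

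For $d_{ur}=d_\Gamma$: the inequality $d_\Gamma\le d_{ur}$ is the easy consequence of the first part. Given $x,y\in M$, the function $f:=d_{ur}(\,\cdot\,,q(x))$ on $M_{ur}$ lies in $\ball{\Lip(M_{ur})}$, so $f\circ q\in\ball{\lip_\Gamma(M)}$ and hence $d_\Gamma(x,y)\ge|f(q(y))-f(q(x))|=d_{ur}(q(y),q(x))=d_{ur}(x,y)$. For the reverse inequality, fix $f\in\ball{\lip_\Gamma(M)}$ and $\eps>0$, and pick a compact $K\subset\RR$ and a $1$-Lipschitz $\gamma:K\to M$ with $\gamma(\min K)=x$, $\gamma(\max K)=y$, and $\lambda([\min K,\max K]\setminus K)<d_{ur}(x,y)+\eps$. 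Extend $f\circ\gamma$ from $K$ to a function $h$ on $I:=[\min K,\max K]$ by interpolating linearly on each complementary interval (whose endpoints lie in $K$); since $f$ and $\gamma$ are $1$-Lipschitz, so is $h$. By curve-flatness, $f\circ\gamma$ is flat $\lambda$-a.e.\ on $K$, which forces $h'=0$ $\lambda$-a.e.\ on $K$ (comparing differentiability points of $h$ with density points of $K$, or applying the area formula to $h\restrict_K$). Hence
$$|f(y)-f(x)|=\Bigl|\int_I h'\,d\lambda\Bigr|\le\int_{I\setminus K}|h'|\,d\lambda\le\lambda(I\setminus K)<d_{ur}(x,y)+\eps,$$
and letting $\eps\to 0$ and taking the supremum over $f$ gives $d_\Gamma(x,y)\le d_{ur}(x,y)$.

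The main obstacle I anticipate is the measure-theoretic bookkeeping around flatness ``relative to $K$'': one must argue carefully that the $\lambda$-a.e.\ flatness of $f\circ\gamma$ on $K$ — a statement about limits taken \emph{within} $K$ — genuinely forces $h'$ to vanish $\lambda$-a.e.\ on $K$, which requires pinning down the interaction between Lebesgue density points of $K$, differentiability points of $h$, and the flatness set, or else a clean appeal to the area formula (\cite[pg.~299]{Federer_1978}). The rescaling that replaces a general Lipschitz $\gamma$ by a $1$-Lipschitz curve in the estimate $d_{ur}(\gamma(s),\gamma(t))\le L\lambda([s,t]\setminus K)$, and the repeated identification of the pseudometric $d_{ur}$ on $M$ with the metric on $M_{ur}$ via $q$, are routine but should be stated explicitly.
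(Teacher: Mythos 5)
Your proof is correct and follows essentially the same route as the paper: the identical estimate $d_{ur}(\gamma(s),\gamma(t))\le L\,\lambda([s,t]\setminus K)$ (obtained by restricting and rescaling $\gamma$) combined with Lebesgue's density theorem gives curve-flatness of $f\circ q$, and the inequality $d_{ur}\le d_\Gamma$ is deduced from the first statement exactly as in the paper. For the reverse inequality $d_\Gamma\le d_{ur}$ the paper only remarks that it is ``a simple consequence of Lebesgue's fundamental theorem of calculus'' and omits the details; your linear-interpolation argument, showing $h'=0$ $\lambda$-a.e.\ on $K$ and integrating, is precisely the intended filling-in of that omission.
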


\begin{proof}
Let $f \in \ball{\Lip(M_{ur})}$. Clearly $g:=f\circ q\in\Lip(M)$ is $1$-Lipschitz, so it remains to prove that $g$ is curve-flat. Let $K\subset\RR$ be compact and $\gamma:K\to M$ be $1$-Lipschitz, and fix $a\leq b\in K$. Then
\begin{align*}
\abs{g\circ\gamma(a)-g\circ\gamma(b)} &= \abs{f\circ q\circ\gamma(a)-f\circ q\circ\gamma(b)} \\
&\leq \lipnorm{f}\cdot d_{ur}(q(\gamma(a)),q(\gamma(b))) \\
&\leq d_{ur}(\gamma(a),\gamma(b)) \\
&\leq \lambda([a,b]\setminus K)
\end{align*}
where we apply the definition of $d_{ur}$ to the restriction of $\gamma$ to $K\cap [a,b]$. By Lebesgue's density theorem, we have $\lambda([a,b]\setminus K)/(b-a)\to 0$ as $b\to a$ for $\lambda$-almost every $a\in K$ (or as $a\to b$ for $\lambda$-almost every $b\in K$). Thus $g$ is curve-flat.

The first statement immediately implies $d_{ur} \leq d_\Gamma$. For the reverse inequality, fix $x,y\in M$, $f\in\ball{\lip_\Gamma(M)}$, a compact $K\subset\RR$ with $a=\min(K)$, $b=\max(K)$, and a 1-Lipschitz map $\gamma:K\to M$ such that $\gamma(a)=x$, $\gamma(b)=y$. Then $f\circ\gamma\in\ball{\Lip(K)}$ is flat at $\lambda$-almost every point in $K$. Extend $f\circ\gamma$ to a function $h\in\ball{\Lip([a,b])}$. By Rademacher's theorem $h$ is differentiable $\lambda$-a.e. in $[a,b]$ with $\norm{h'}_\infty\leq 1$ and $h'=0$ $\lambda$-a.e. in $K$. Hence, by Lebesgue's fundamental theorem of calculus
$$
\abs{f(y)-f(x)} = \abs{h(b)-h(a)} = \abs{\int_a^b h'\,d\lambda} = \abs{\int_{[a,b]\setminus K} h'\,d\lambda} \leq \lambda([a,b]\setminus K) .
$$
Taking the supremum over $f$ in the left hand side and the infimum over $K$ in the right hand side yields $d_\Gamma(x,y)\leq d_{ur}(x,y)$.
\end{proof}

We can use Proposition \ref{prop:dur=dG} to prove that $q$ collapses every curve fragment in $M$ down to an $\HH^1$-null subset of $M_{ur}$.

\begin{proposition} \label{prop:collapse}
For every metric space $M$, compact $K \subset \RR$, and Lipschitz $\gamma: K \to M$, $\HH^1(q(\gamma(K))) = 0$.
\end{proposition}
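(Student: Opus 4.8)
The plan is to use the defining inequality for $d_{ur}$ to factor the Lipschitz curve $q\circ\gamma$ through a scalar Lipschitz function, and then to invoke the classical one-dimensional area formula.

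First I would dispose of the trivial case ($\gamma$ constant) and, after rescaling the parameter --- replacing $K$ by $cK$ and $\gamma$ by $u\mapsto\gamma(u/c)$ with $c=\Lip(\gamma)$, which leaves the image $\gamma(K)$ unchanged --- reduce to the case that $\gamma$ is $1$-Lipschitz. Then I would introduce $g\colon K\to\RR$, $g(x):=\lambda([\min K,x]\setminus K)$, which is nondecreasing and $1$-Lipschitz, and record that $g'=0$ $\lambda$-a.e.\ on $K$ (Lebesgue density theorem, since $g=\int_{\min K}^{\,\cdot\,}\mathsf 1_{\RR\setminus K}\,d\lambda$ and $\lambda$-a.e.\ point of $K$ is a density point of $K$). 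The main point is the estimate
\[
d_{ur}\bigl(q(\gamma(s)),q(\gamma(t))\bigr)\leq|g(s)-g(t)|\qquad (s,t\in K),
\]
which holds because, for $s\leq t$ in $K$, the restriction $\gamma\restrict_{K\cap[s,t]}$ is an admissible competitor in the infimum defining $d_{ur}(q(\gamma(s)),q(\gamma(t)))$, with $\lambda([s,t]\setminus K)=g(t)-g(s)$.

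From this estimate, $q\circ\gamma$ factors as $q\circ\gamma=h\circ g$ for a well-defined $1$-Lipschitz map $h\colon g(K)\to M_{ur}$, where $g(K)\subset\RR$ carries the Euclidean metric. Since Lipschitz maps do not increase $\HH^1$, and $\HH^1$ coincides with $\lambda$ on subsets of $\RR$,
\[
\HH^1(q(\gamma(K)))=\HH^1\bigl(h(g(K))\bigr)\leq\HH^1(g(K))=\lambda(g(K)),
\]
and the one-dimensional area formula \cite[pg.~299]{Federer_1978} (equivalently, the change-of-variables inequality for absolutely continuous functions) yields $\lambda(g(K))\leq\int_K|g'|\,d\lambda=0$, which completes the argument.

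I do not anticipate a genuine obstacle. The only steps needing a little care are checking that the reparametrization really preserves the image while producing a genuinely $1$-Lipschitz curve, and the a.e.\ vanishing of $g'$ on $K$. A variant would avoid the factorization by observing that the metric derivative of the curve $q\circ\gamma$ vanishes $\lambda$-a.e.\ on $K$ (the same estimate together with the Lebesgue density theorem) and applying Kirchheim's area formula for metric-space-valued Lipschitz maps; but the factorization through $g$ is shorter and relies only on the $1$-dimensional area formula already cited in the paper.
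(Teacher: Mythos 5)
Your proof is correct, and it takes a genuinely different route from the paper's. The paper argues by contradiction: if $\HH^1(q(\gamma(K)))>0$, then Kirchheim's results (\cite[Lemma 4 and Theorem 9]{Kirchheim_1994}) produce a positive-measure compact $K$ on which $q\circ\gamma$ is bi-Lipschitz; a McShane--Whitney extension $g$ of $(q\circ\gamma)^{-1}$ then makes $g\circ q$ a curve-flat function (by Proposition \ref{prop:dur=dG}) whose image of the curve fragment has positive measure, a contradiction. You instead argue directly: the estimate $d_{ur}(\gamma(s),\gamma(t))\leq\lambda([s,t]\setminus K)=g(t)-g(s)$ --- which is exactly the inequality at the heart of the paper's proof of Proposition \ref{prop:dur=dG} --- lets you factor $q\circ\gamma=h\circ g$ with $h$ $1$-Lipschitz on $g(K)\subset\RR$, and the one-dimensional area formula kills $\lambda(g(K))$ since $g'=0$ a.e.\ on $K$. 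In effect you inline the content of Proposition \ref{prop:dur=dG} and bypass both Kirchheim's theorem and the detour through curve-flat functions, which makes your argument more elementary and exhibits the collapse of the curve fragment explicitly; the paper's version is shorter on the page because it reuses Proposition \ref{prop:dur=dG} and standard rectifiability machinery already cited elsewhere in the section. All the steps you flag as needing care (the rescaling $K\mapsto cK$ preserving the image, admissibility of $\gamma\restrict_{K\cap[s,t]}$ as a competitor in the infimum defining $d_{ur}$, and $g'=0$ $\lambda$-a.e.\ on $K$ via Lebesgue differentiation) do go through as you describe.
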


\begin{proof}
Suppose the proposition is false. Then by Lemma \ref{lem:Kirchheim}, we can find $K \subset \RR$ compact with $\lambda(K) > 0$ and $\gamma: K \to M$ Lipschitz such that $q \circ \gamma: K \to M_{ur}$ is a bi-Lipschitz embedding. Then $(q \circ \gamma)^{-1}: q(\gamma(K)) \to \RR$ is a bi-Lipschitz embedding, and we let $g: M_{ur} \to \RR$ be any McShane-Whitney extension. By Proposition \ref{prop:dur=dG}, $g \circ q: M \to \RR$ is curve-flat Lipschitz, but $\lambda(g(q(\gamma(K)))) = \lambda(K)  > 0$, a contradiction.
\end{proof}

We will use the remainder of this subsection to estimate the index $\alpha_M$ of some example spaces $M$.

\begin{example}[1-Rectifiable Metric Spaces] \label{ex:rectifiable}
Recall that a metric space is 1-rectifiable if it is the union of countably many curve fragments and an $\HH^1$-null set. Proposition~\ref{prop:collapse} implies $\HH^1(M_{ur}) = 0$, and hence $M_{ur}$ is purely 1-unrectifiable, whenever $M$ is 1-rectifiable. In this case, $\alpha_M = 0$ if $\HH^1(M) = 0$ and $\alpha_M = 1$ if $\HH^1(M) > 0$.
\end{example}

Every 1-rectifiable metric space is $\HH^1$-$\sigma$-finite, and so Example \ref{ex:rectifiable} may tempt one to believe $\alpha_M \leq 1$ whenever $M$ is $\HH^1$-$\sigma$-finite. Our final example shows that this is not the case.

\begin{example}[Bounded Turning Arcs] \label{ex:bta}
We will sketch the construction of an $\HH^1$-$\sigma$-finite 1-bounded turning arc $M$ with $\diam(M) = \diam(M_{ur}) = 1$. Theorem \ref{thm:btarc} implies $M_{ur}^{(\omega_1)}$ is a single point and thus $\alpha_M > 1$. We believe the construction can be iterated to make $\alpha_M$ an arbitrarily large countable ordinal, but we leave those details to the interested reader.

Let $\CC \subset [0,1]$ be the standard middle thirds Cantor set and $\beta := \log_3(2)$ the Hausdorff dimension of $\CC$. The Cantor function $f: \CC^\beta \to [0,1]$ is monotone, surjective, and 1-Lipschitz (\cite[Proposition 10.1]{Cantor}), where $\CC^\beta$ denotes the snowflake space. Let $(M,d)$ be the metric space obtained by ``filling in the gaps" of $\CC^\beta$ with geodesics. Precisely, consider the collection of all doubletons $\{x < y\} \subset \CC^\beta$ such that $[x,y] \cap \CC = \{x,y\}$, then form the disjoint union $\CC^\beta \sqcup \bigsqcup_{\{x<y\}} [0,d(x,y)]$, identify each $x$ with its copy 0 and each $y$ with $d(x,y)$, and equip the resulting quotient space $M$ with the largest metric $d$ such that the inclusions $\CC^\beta, [0,d(x,y)] \hookrightarrow (M,d)$ are isometric embeddings. The space $M$ is a 1-bounded turning arc with $\diam(M) = 1$. The Cantor function $f$ extends to a 1-Lipschitz map $f: M \to [0,1]$ that is constant on each geodesic. Observe that $f$ is curve-flat because $\CC^\beta$ is purely 1-unrectifiable and $f$ is constant on each of the countably many geodesics. This gives us $d_{ur}(x,y) = d_\Gamma(x,y) \geq |f(x)-f(y)|$. In particular, $\diam(M_{ur}) = 1$. However, $\HH^1(\CC^\beta) = \HH^\beta(\CC) < \infty$, implying that $M$ is $\HH^1$-$\sigma$-finite. Thus, $M_{ur}^{(\omega_1)} = M_\LL$ is a single point by Theorem~\ref{thm:btarc}.
\end{example}


\section*{Acknowledgments}

The authors wish to thank David Bate, Marek C\'uth, Gilles Godefroy, Miguel Mart\'in and Nik Weaver for their suggestions on the topic and presentation of the paper, as well as the anonymous referee for their very careful reading of our manuscript.

R. J. Aliaga was partially supported by the Spanish Ministry of Economy, Industry and Competitiveness under Grant MTM2017-83262-C2-2-P.
C. Petitjean and A. Proch\'azka were partially supported by the French ANR project No. ANR-20-CE40-0006.


\end{document}